\documentclass [leqno, spanish, 10pt]{amsart}

\AtBeginDocument{%
   \def\MR#1{}
}

\AtBeginDocument{%
   \def\DOI#1{}
}
\AtBeginDocument{%
   \def\URL#1{}
   \def\ISBN#1{}
}

\usepackage[T1]{fontenc}
\usepackage[utf8]{inputenc}

\usepackage[headings, myheadings]{fullpage}
\usepackage{lmodern}
\usepackage{microtype}

\usepackage[all]{xy}
\usepackage{amssymb, amsmath, amsfonts, amsthm, amsbsy, amscd, amsxtra, stmaryrd}
\usepackage{mathabx}
\usepackage[bbgreekl]{mathbbol} 
\usepackage[vcentermath]{youngtab} 
\usepackage{pgf,interval}
\usepackage{ytableau}
\usepackage[mathscr]{euscript}
\usepackage{upgreek}
\usepackage{mathtools}

\usepackage{tikz}
\usetikzlibrary{arrows}
\usetikzlibrary{decorations.pathreplacing}
\makeatletter
\def\underbrace#1{\@ifnextchar_{\tikz@@underbrace{#1}}{\tikz@@underbrace{#1}_{}}}
\def\tikz@@underbrace#1_#2{\tikz[baseline=(a.east)] {\node (a) {\(#1\)}; \draw[ultra thick,line cap=round,decorate,decoration={brace,amplitude=5pt}] (a.south east) -- node[below,inner sep=7pt] {\(\scriptstyle #2\)} (a.south west);}}
\makeatother

\usepackage[inline]{enumitem}
\usepackage{subcaption}
\usepackage{url}
\urlstyle{same} 
\usepackage{hyperref}
\hypersetup{
	colorlinks=true, linktocpage=true, hidelinks
}

\usepackage{etoolbox}
\apptocmd{\sloppy}{\hbadness 10000\relax}{}{}

\usepackage{thmtools} 

\declaretheorem[
name = Theorem,
style = plain,
numberwithin = section,
]{theorem}

\declaretheorem[
name = Lemma,
style = plain,
numberwithin = section,
 sharenumber= theorem,
]{lemma}

\declaretheorem[
name = Corollary,
style = plain,
numberwithin = section,
sharenumber = theorem,
]{corollary}

\declaretheorem[
name = Definition,
style = plain,
numberwithin = section,
sharenumber = theorem,
]{definition}

\declaretheoremstyle[
  style=examplestyle,
  headfont=\normalfont\bfseries,
  numberwithin = section,
  sharenumber = theorem,
  bodyfont=\normalfont,
     qed = {\hbox{$\triangleleft$}}
]{examplestyle}

\declaretheorem[
  style=examplestyle,
  title=Example,
  numberwithin = section,
  sharenumber = theorem,
  refname={example,examples},
  Refname={Example,Examples},
]{example}

\declaretheorem[
  style=examplestyle,
  title=Remark,
  numberwithin = section,
 sharenumber = theorem,
  refname={remark, remarks},
  Refname={Remark, Remarks},
]{remark}

\numberwithin{equation}{section}

\makeatletter
\def\l@subsection{\@tocline{2}{0pt}{1pc}{4.6em}{}}
\renewcommand{\tocsubsection}[3]{%
  \indentlabel{\@ifnotempty{#2}{\hspace*{2.3em}\makebox[2.3em][l]{%
    \ignorespaces#1 #2.\hfill}}}#3}
\makeatother

\newcommand{\cng}{\ga}
\newcommand{\wts}{\mathscr{M}}
\newcommand{\wtse}{\mathscr{E}}
\newcommand{\bt}{\mathit{L}}
\newcommand{\ricform}{\rho}
\renewcommand{\div}{\operatorname{\delta}}
\newcommand{\symd}{\mathscr{S}}
\newcommand{\sraise}{\mathsf{X}_{+}}
\newcommand{\slower}{\mathsf{X}_{-}}
\newcommand{\sdeg}{\mathsf{Z}}

\newcommand{\kpc}{\square}

\newcommand{\cod}{\mathscr{C}}

\newcommand{\kcons}{\mathsf{K}}
\newcommand{\ccons}{\mathsf{L}}
\newcommand{\divcons}{\mathsf{D}}
\newcommand{\lie}{\mathfrak{L}}

\newcommand{\clie}{\mathscr{L}}
\newcommand{\klie}{\mathscr{K}}
\newcommand{\kliea}{\mathscr{K}^{\ast}}

\newcommand{\ilp}{\left(}
\newcommand{\irp}{\right)}
\newcommand{\iln}{\lVert}
\newcommand{\irn}{\rVert}
\newcommand{\cosmo}{\Lambda}
\newcommand{\stcod}{\mathscr{T}^{\text{Cod}}}
\newcommand{\stkill}{\mathscr{T}^{\text{Kill}}}

\newcommand{\stp}{\mathscr{T}^{+}}
\newcommand{\stm}{\mathscr{T}^{-}}

\newcommand{\stpm}{\mathscr{T}^{\pm}}

\def\op#1{\wideparen{#1}}

\newcommand{\stw}{\mathbb{W}}

\newcommand{\mcurv}{\mathscr{MC}}

\newcommand{\sphere}{\mathbb{S}}

\newcommand{\imt}{\iota}
\newcommand{\simt}{\mathsf{s}}

\newcommand{\quat}{\mathbb{H}}
\newcommand{\cayley}{\mathbb{O}}
\newcommand{\met}{\mathsf{h}}

\newcommand{\cubic}{\mathcal{C}}
\newcommand{\lich}{\diamond_{L}}
\newcommand{\culap}{\square}

\newcommand{\ih}{\mathsf{i}_{h}}
\newcommand{\tlie}{\mathscr{T}}

\newcommand{\sN}{\mathscr{N}}
\newcommand{\sT}{\mathscr{T}}

\renewcommand{\sprod}{\odot}
\newcommand{\sbl}{\mathsf{\si}}
\newcommand{\tf}{\operatorname{\mathsf{tf}}}
\newcommand{\symk}{S^{k}(\ctm)}

\newcommand{\precod}{\mathscr{C}}
\newcommand{\symkt}{S^{k}_{0}(\ctm)}

\newcommand{\symkpt}{S^{k+1}_{0}(\ctm)}

\newcommand{\symkmt}{S^{k-1}_{0}(\ctm)}

\newcommand{\symktv}{S^{k}_{0}(TM)}

\newcommand{\symkptv}{S^{k+1}_{0}(TM)}

\newcommand{\om}{\omega}

\newcommand{\mlt}{\circ}

\newcommand{\mt}{\mathcal{M}}

\def\dps#1{\begin{align}\begin{aligned}#1\end{aligned}\end{align}}

\newcommand{\Om}{\Omega}

\newcommand{\vol}{\operatorname{vol}}

\newcommand{\Pis}{\Pi^{\sharp}}
\newcommand{\tD}{\tilde{D}}

\newcommand{\ka}{\kappa}

\newcommand{\sR}{\mathscr{R}}

\newcommand{\sW}{\mathscr{W}}

\newcommand{\df}{\mathscr{D}}
\newcommand{\ef}{\mathscr{G}}
\newcommand{\efr}{\mathscr{G}}
\newcommand{\rictr}{\operatorname{\rho}}

\newcommand{\riem}{\mathit{Riem}}
\newcommand{\scal}{\mathit{S}}
\newcommand{\sric}{\mathit{Ric}}
\newcommand{\ric}{\operatorname{\mathit{Ric}}}
\newcommand{\statscal}{r}

\newcommand{\qR}{\mathscr{Q}_{\mathscr{R}}}

\newcommand{\qH}{\mathscr{Q}_{\mathscr{H}}}
\newcommand{\sH}{\mathscr{H}}
\newcommand{\qY}{\mathscr{Q}_{\mathscr{Y}}}

\newcommand{\sY}{\mathscr{Y}}

\newcommand{\T}{\mathcal{T}}

\renewcommand{\part}{\vdash}

\newcommand{\Id}{\operatorname{Id}}

\newcommand{\sff}{\mathbb{\Pi}}

\newcommand{\dum}{\,\cdot\,\,}
\newcommand{\Ga}{\Gamma}

\newcommand{\lap}{\Delta}

\renewcommand{\j}{\mathsf{i}}

\newcommand{\la}{\lambda}
\newcommand{\ep}{\epsilon}

\newcommand{\ext}{\bigwedge}
\newcommand{\cinf}{C^{\infty}}

\newcommand{\ben}{[\Bar{\nabla}]}

\newcommand{\eno}{\operatorname{End}}
\newcommand{\si}{\sigma}
\newcommand{\pr}{\partial}

\newcommand{\ctm}{T^{\ast}M}

\newcommand{\aR}{R}

\newcommand{\bnabla}{\bar{\nabla}}

\def\mr#1{\tf(#1)}
\newcommand{\en}{[\nabla]}

\newcommand{\im}{\operatorname{Im}}
\newcommand{\aff}{\mathcal{A}}

\newcommand{\B}{\mathcal{B}}

\newcommand{\lb}{\langle}
\newcommand{\ra}{\rangle}

\newcommand{\ste}{\mathbb{V}}
\newcommand{\std}{\mathbb{V}^{\ast}}

\newcommand{\sted}{\mathbb{V}^{\ast}}

\newcommand{\A}{\mathcal{A}}
\newcommand{\al}{\alpha}
\newcommand{\be}{\beta}
\newcommand{\ga}{\gamma}
\newcommand{\frameb}{\mathcal{G}}
\newcommand{\W}{\mathscr{W}}

\newcommand{\hnabla}{\widehat{\nabla}}

\newcommand{\proj}{\mathbb{P}}

\newcommand{\g}{\mathfrak{g}}
\newcommand{\ad}{\text{ad}}

\newcommand{\tensor}{\otimes}

\newcommand{\rea}{\mathbb R}
\newcommand{\com}{\mathbb C}
\newcommand{\skmax}{\Lambda}
\newcommand{\skmin}{\lambda}

\newcommand{\tr}{\operatorname{\mathsf{tr}}}

\newcommand{\kwedge}{\owedge}

\let\oldtocsection=\tocsection
\let\oldtocsubsection=\tocsubsection
\renewcommand{\tocsection}[2]{\hspace{0em}\oldtocsection{#1}{#2}}
\renewcommand{\tocsubsection}[2]{\hspace{1em}\oldtocsubsection{#1}{#2}}

\begin{document}
\title{Curvature equations coupling symmetric tensors with a metric}

\author{Daniel J.~F. Fox}
\address{Departamento de Matemática Aplicada\\ Escuela Técnica Superior de Arquitectura\\ Universidad Politécnica de Madrid\\Av. Juan de Herrera 4 \\ 28040 Madrid España}
\email{daniel.fox@upm.es} 


\begin{abstract}
There are described hierarchies of equations coupling a metric with a trace-free tensor having prescribed symmetries and in the kernel of certain generalized gradients. These specialize, when the tensor vanishes identically, to the usual hierarchy of constant sectional curvature (projectively flat), Einstein, and constant scalar curvature. At the Ricci curvature level these equations are formal analogues of the Einstein-Maxwell and supergravity equations that couple differential forms with a metric. The particular cases coupling a metric with trace-free symmetric tensors satisfying the Codazzi or conformal Killing equations are studied in detail. Examples of solutions are obtained from mean curvature zero immersions, affine spheres, isoparametric hypersurfaces, and related algebraic constructions. The formalism yields a hierarchy of curvature equations for statistical structures. There are deduced constraints on the scalar curvature of the metric occurring in a solution that generalize classical results of Simons, for mean curvature zero hypersurfaces in spheres, and of Calabi, for hyperbolic affine spheres.
\end{abstract}

\maketitle

\setcounter{tocdepth}{2}  

\begin{footnotesize}
\tableofcontents
\end{footnotesize}

\section{Introduction}
The hierarchy of equations given by the conditions of constant sectional curvature, vanishing Einstein tensor, and constant scalar curvature is generalized to one coupling a metric with a tensor having prescribed symmetries and solving some partial differential conditions given by generalized gradient (Stein-Weiss) operators. The special case of trace-free symmetric tensors is studied in detail.

 Let $h$ be a pseudo-Riemannian metric and $\om$ a trace-free tensor with symmetries prescribed by some Young diagram. The hierarchy of equations \eqref{gs1}-\eqref{gs3} below for the pair $(h, \om)$ specializes to the usual constant curvature hierarchy when $\om \equiv 0$.  Each equation comprises two parts:
\begin{itemize}
\item A \emph{curvature equation} expressing the vanishing of a linear relation involving some part of the curvature of $h$ - such as the full curvature tensor, Einstein tensor, or scalar curvature - some expression quadratic in the auxiliary trace-free tensor $\om$ and some tensor built from $h$ itself;
\item Linear differential equations on the auxiliary tensor $\om$ given by the vanishing of some generalized gradients $\A_{\si}$, where $\si$ runs over some set $\wtse$ of weights of certain orthogonal group representations related with the symmetries of $\om$. The generalized gradients arise as orthogonal projections of $D\om$.
\end{itemize} 
The sign $\ep \in \{\pm 1\}$ and constant $\ka \in \rea$ are auxiliary parameters.
\begin{flalign}
\tag{GS1}\label{gs1} 
&\emph{Coupled projective flatness}:& &\A_{\si}(\om) = 0, \si \in \wtse,&&\riem - \ep \phi(\om, \om) = -\tfrac{\ka}{n(n-1)}h\kwedge h,  &
\\
\tag{GS2}\label{gs2}  
&\emph{Coupled Einstein equations}:& &\A_{\si}(\om) = 0,\si \in \wtse,&&
\begin{aligned}
&\ric - \ep \rictr(\phi(\om, \om))  = \tfrac{\ka}{n}h,
\,\,\text{or, equivalently,}\\
&\,\,\efr + \cosmo h = \ep \T(\om) ,\end{aligned} 
\\
\tag{GS3}\label{gs3} 
&\emph{Constraint equations}: & &\A_{\si}(\om) = 0, \si \in \wtse,&& \cosmo = \tfrac{n-2}{2n}\left(\scal + \ep \tfrac{n+2r}{r(n-2)}h(\om, \om)\right) \,\,\text{is constant}.
\end{flalign}
Here $\riem$, $\ric$, and $\scal$ are the Riemann, Ricci, and scalar curvatures, $\efr = \ric - \tfrac{1}{2}\scal h$ is the Einstein tensor, $\rictr$ is the Ricci trace of a tensor with metric curvature symmetries, $\phi$ is a bilinear map taking values in metric curvature tensors, and $r$ is an integer parameter determined by the symmetries of $\om$. 
Existence and behavior of solutions depend strongly on $\ep$.  Rescaling $\om$ changes the value of $\ka$ but not its sign. The specifications of the symmetries of $\om$, the map $\phi$, and the operators $\A_{\al}$ are all essential parts of the data. The scheme and the data constituting it is explained precisely in Section \ref{overviewsection}. Minimal conditions on such a scheme include that it be consistent and that it admit nontrivial solutions.

Special cases of such coupled equations at the full curvature level \eqref{gs1} include equations satisfied by the second fundamental forms of mean curvature zero submanifolds of space forms and mean curvature zero Lagrangian submanifolds of (para/pseudo)-Kähler space forms, while special cases of such coupled equations at the Ricci level \eqref{gs2} include the Einstein-Maxwell and supergravity equations.

The differential Bianchi identity imposes nontrivial consistency/integrability conditions at the level \eqref{gs1} and its trace does the same at the level \eqref{gs2}. These conditions are quadratic in $\om$. At the Ricci/Einstein level \eqref{gs1} there are many choices of data for which these integrability conditions are automatically satisfied in the sense that they are consequences of the vanishing of the specified generalized gradients, but at the full curvature level \eqref{gs2} such automatic consistency is rarer. For example, when $\om$ is a $2$-form and the generalized gradients are the exterior differential and codifferential, the Ricci/Einstein level is consistent with the standard Einstein-Maxwell coupling. However, while in this case the full curvature level does admit nontrivial solutions, the integrability condition on it derived from the differential Bianchi identity is not generally satisfied and is not an automatic consequence of the vanishing of these specific generalized gradients. On the other hand, as this example illustrates, even the case where only the level \eqref{gs2} is automatically consistent can be interesting.

Section \ref{overviewsection} describes the scheme for auxiliary tensors having symmetries determined by an arbitrary Young diagram. In this description some details that depend on the form of the Young diagram are omitted or left unchecked. 
The cases of exterior forms and trace-free symmetric tensors are the simplest, as they correspond to the extremal diagrams consisting of a single column or a single row. They are also the cases most directly interpretable geometrically. In particular, for symmetric tensors in certain degrees the corresponding equations have a clear geometric interpretation. This has the additional benefit of supplying, via nontrivial relations with known theorems, large classes of solutions to specific cases of the equations in contexts where direct solution may not appear tractable a priori. For example, trace-free cubic tensors are related with the study of hypersurfaces in flat affine space and an abundance of solutions to the proposed equations arise from existence theorems for affine spheres going back to work of Cheng-Yau solving an auxiliary Monge-Ampère equation.

Some effort is devoted in Section \ref{generalschemesection} to explaining that the general scheme has sense in that at least the datum $\phi$ generally is available. More generally it needs to be shown that there are choices of generalized gradients for which the kernels are nontrivial and moreover for which the equations of the general scheme admit solutions. For the coupled constraint equations this appears largely unproblematic and for the coupled Einstein equations one expects an abundance of solutions but this needs to be justified. However, for the coupled projectively flat equations it is not evident that solutions exist at all. In particular the consistency/integrability condition imposed by the differential Bianchi identity becomes clearly problematic in general. The bulk of the paper provides motivation for extending the project by presenting examples of solutions to these equations in the trace-free Codazzi setting, and describing some properties of solutions when they exist. Although the results presented are very far from complete, they indicate that in some particular cases solutions abound and admit interesting geometric interpretations.

The scheme is studied in detail in the special case where the coupled tensor is a trace-free symmetric $k$-tensor that is divergence-free and Codazzi, meaning its covariant derivative is trace-free and symmetric. This corresponds to the simplest Young diagrams that have only a single row of $k$ boxes. The resulting hierarch \eqref{projectivehiggsintro}-\eqref{constraintintro} is presented in Section \ref{couplingsection}.

The special case $k = 3$ yields a hierarchy of curvature equations for statistical structures and having links with the study of special hypersurfaces in flat affine space such as affine spheres that is described in Section \ref{statisticalsection} and is of independent interest. The equations corresponding with the similarly simple case of a Young diagram having only a single column of $k$ boxes, for which the auxiliary tensor is a $k$-form, are described as well, although they are not studied later in detail. In this case the generalized gradients are simply the exterior differential and codifferential.

For trace-free completely symmetric $k$-tensors and the generalized gradients which are the divergence and the trace-free Codazzi operator, the hierarchy is \emph{automatically consistent} in the sense that the integrability condition at the full curvature level is a consequence of the vanishing of the generalized gradients. This circumstance provides motivation for studying the full hierarchy in detail in this specific context.

For $k \in \{2, 3\}$ many solutions to the equations of this hierarchy are available from submanifold geometry. Solutions when $k = 2$ and $k = 3$ are obtained from mean curvature zero immersions in spaces forms and mean curvature zero Legendrian immersion in (para)-Kähler space-forms. The case $k = 3$ also occurs in relation to the construction of affine spheres, and yields a full hierarchy for what are called statistical structures that includes as a special case the equations for affine spheres. On the one hand it is conceptually clarifying to see these problems in submanifold geometry as part of a general framework regarding projective flatness and Einstein type equations. On the other hand classical results about such submanifolds, such as growth estimates for the second fundamental form going back to Cheng and Yau and integral pinching conditions in the style of Simons, suggest qualitative properties of solutions in the more general setting.

Section \ref{examplesection} describes various constructions of solutions to \eqref{stressenergyintro} and \eqref{projectivehiggsintro}.

Sections \ref{geometricexamplesection} records examples of solutions of \eqref{stressenergyintro} and \eqref{projectivehiggsintro}. The $k = 2, 3$ special cases include the equations for mean curvature zero hypersurfaces in spheres, the equations for mean curvature zero Lagrangian submanifolds of (para/pseudo)-Kähler space forms, and the equations for affine spheres. The equations for a mean curvature zero nondegenerate immersion of a hypersurface in a pseudo-Riemannian space form and the equations for an affine sphere are special cases of the equations \eqref{projectivehiggsintro}, for a symmetric tensor $\om$ of rank $k = 3$. In these cases the tensor $\om$ is, respectively, the second fundamental form or the cubic form of the immersion. In both these contexts, a solution $(h, \om)$ to the equations \eqref{stressenergyintro} can be viewed as a more general geometric structure, not necessarily induced via an immersion. These examples show that, at least for $k \leq 3$, solutions to \eqref{projectivehiggsintro} abound. The case of affine spheres is particularly interesting as from it and results of Cheng-Yau it can be deduced that a properly convex flat real projective manifold carries a solution of the coupled projectively flat equations. This demonstrates that there are many solutions in Riemannian signature. 
On the other hand, this means that a direct solution of the $k = 3$ case of \eqref{projectivehiggsintro} would give an alternative manner of constructing affine spheres and the canonical Cheng-Yau metric on a properly convex flat real projective manifold. In fact this observation motivated the entire project described here, although its realization appears difficult.

In the $k = 3$ case the tensor $\om_{ij}\,^{k} = \om_{ijp}h^{kp}$ can be viewed as determining a structure of a commutative not necessarily associative algebra on each tangent space of $M$. In this case a solution $(h, w)$ of \eqref{projectivehiggsintro} can be seen as a curved generalization of a Frobenius manifold in the sense of \cite{Dubrovin, Manin-frobenius}. If the metric $h$ is flat and $k = 3$ then \eqref{projectivehiggsintro} becomes a projective version of the equations known as the \emph{associativity} or \emph{WDVV} equations \cite{Dubrovin-integrable, Dubrovin, Marshakov-Mironov-Morozov, Marshakov}, to which it specializes if moverover $\ka = 0$. 

The $k = 3$ case can be reformulated as yielding a hierarchy of curvature equations for statistical structures. The Ricci/Einstein level of this hierarchy was described in \cite[Section $9$]{Fox-conelike} in a form that goes back to \cite{Fox-ahs, Fox-2dahs, Fox-crm}, but it is useful to see this embedded as part of the full hierarchy. In this case the constant curvature level of the hierarchy is a generalization of the equations for affine spheres.

If $h$ is pseudo-Euclidean, so flat, the curvature terms in \eqref{stressenergyintro} vanish, and there remains a purely algebraic equation for the tensor $\om$. When $k = 3$, the complete polarization of a harmonic cubic polynomial solving the algebraic part of \eqref{stressenergyintro} determines the structure tensor of a metrized commutative algebra that is in general neither associative nor unital. On a flat Euclidean background, when $k = 3$, there is in each dimension $n \geq 2$ a unique solution of the coupled projectively flat equations \cite{Fox-crm, Fox-cubicpoly, Fox-simplicial}. On the other had, in the case of completely symmetric trace-free $3$-tensors the coupled Einstein equations admit a rich abundance of solutions with flat background that correspond to a broad class of metrized commutative algebras that has been studied in \cite{Fox-ahs, Fox-crm, Fox-cubicpoly, Fox-simplicial, Fox-stsalgebra}. Related algebras are studied in \cite{Nadirashvili-Tkachev-Vladuts}. 

There are described also two algebraic constructions of solutions on a flat background that work for larger $k$. First, Theorem \ref{isoparametrictheorem} shows that all isoparametric polynomials yield solutions. Second, Theorem \ref{graphpolynomialtheorem} associates a solution with every $k$-regular graph; this is related to a construction in the cubic ($k = 3$) case in \cite{Fox-cubicpoly, Fox-stsalgebra}.

Similarly, there can be sought left-invariant or biinvariant solutions on Lie groups or left-invariant solutions on reductive homogeneous spaces. In these cases it is unlikely that there are purely algebraic solutions of the coupled projectively flat equations, but the description of all solutions of the coupled Einstein equations on a compact simple real Lie group is a tractable problem in representation theory. The corresponding problem for other real forms of simple Lie groups should also be accessible, although more difficult. Theorem \ref{stressenergyexampletheorem} shows that on any compact simple Lie group solutions to \eqref{stressenergyintro} can be constructed from invariant polynomials on its Lie algebra. These solutions show that \eqref{stressenergyintro} admits nontrivial solutions in arbitrarily high dimensions for arbitrarily large $k$.

Such results are motivated by and generalize results about immersed submanifolds that go back to Calabi \cite{Calabi-completeaffine}, in the context of affine spheres, Simons in the context of minimal immersions in spheres \cite{Simons}, and Cheng-Yau \cite{Cheng-Yau-maximalspacelike, Yau-cmcii} in the context of hypersurfaces of various kinds, as well as many others. 

The general pattern of such results is as follows. There is a Weitzenböck identity for the Laplacian of the squared-norm of a tensor satisfying some partial differential equation (here $\om$). Depending on the signs of some curvature terms there are two general classes of results. Refined Kato inequalities and sharp bounds on algebraic terms involving $\om$ yield a differential inequality that, via a sort of argumentation developed most prominently by Calabi and Cheng-Yau, yields an upper bound on $|\om|^2$ that can be interpreted as an upper bound on the scalar curvature of $h$ (working harder along the same lines one could obtain an upper bound on the Ricci curvature of $h$). The second follows arguments from Simons \cite{Simons}, and integrates the Weitzenböck formula to obtain integral bounds on $|\om|^{2}$. The results along these lines obtained here are reported in Section \ref{constraintsection} as Theorems \ref{scalarcurvaturetheorem} and \ref{simonstheorem}.

The proofs require bounds on $|\om \kwedge \om|^{2}$ that are presented in Section \ref{boundsection}. The bounds presented are sharp for $k\leq 3$ but almost certainly are not sharp for $k > 3$. 

The Weitzenböck identities are needed here for showing in full generality the the equations f{stres       senergy} generalizing \eqref{stressenergyintro} that are considered here are well formulated, and for understanding when their hypotheses are nontrivial. This is described in Section \ref{couplingsection}. They are needed also to obtain the estimate leading to Theorem \ref{scalarcurvaturetheorem}. The proof of Theorem \ref{scalarcurvaturetheorem} uses the refined Kato ine    qualities for trace-free Codazzi and conformal Killing tensors described in Section \ref{katosection}. 

The general scheme \eqref{gs1}-\eqref{gs3} is described in a form so that it should be conceptually clear what form analogous Cheng-Yau and Simons type results might take for couplings with tensors having more general symmetry types. Most of the necessary machinery is available in full generality. In \cite{Hitchin-vanishing} Hitchin describes the machinery necessary for showing vanishing theorems for sections of kernels of generalized gradients. Related results in special cases are shown for Rarita-Schwinger operators in \cite{Branson-Hijazi, Homma-Semmelmann}. In \cite{Semmelmann-Weingart}, Semmelmann and Weingart describe the Weitzenböck machinery in the necessary generality. What is described here for symmetric tensors provides a roadmap for the more complicated general case. 

The main, interrelated objectives of the article can be summarized as follows:  
\begin{itemize}[leftmargin=11pt]
\item Describe a general scheme for pairs $(h, \om)$ comprising a pseudo-Riemannian metric and a tensor $\om$ having prescribed symmetries yielding a hierarchy of equations coupling the curvature of the metric with an expression quadratic in $\om$ and requiring that $\om$ satisfy some auxiliary partial differential equations expressed in terms of generalized Stein-Weiss operators.
\item Propose for statistical structures (and a sort of conformal generalization of statistical structures) a hierarchy of curvature equations generalizing the classical hierarchy of equations for a pseudo-Riemannian given by the conditions of constant sectional curvature, vanishing Einstein tensor, and constant scalar curvature.
\item Explain the relation between the previous two proposals and examine it in detail in the special case where $\om$ is a trace-free completely symmetric $k$-tensor and the auxiliary partial differential equations require that it be divergence-free and Codazzi. The $k = 3$ case recovers the hierarchy of curvature equations for statistical structures.
\item Provide examples of nontrivial solutions of the proposed systems.
\item For the case of Codazzi symmetric tensors, prove Cheng-Yau style growth estimates and Simons style integral pinching theorems. Independently of their own interest, these results, proved in Section \ref{constraintsection}, are illustrative of what can be expected in the general setting.
\end{itemize}

\section{General scheme: hierarchies of curvature equations}\label{overviewsection}

This section describes in detail the general scheme and its specializations to the cases of completely antisymmetric and completely symmetric tensors.

\subsection{Notational and terminological conventions}
Before proceeding it is useful to fix some notational and terminological conventions.
A \emph{pseudo-Euclidean} metric on a vector space $\ste$ means a pseudo-Riemannian metric whose Levi-Civita connection determines the flat affine structure on $\ste$. A \emph{metrized vector space} is a pair $(\ste, h)$, where $\ste$ is a finite-dimensional real vector space and $h\in S^{2}\sted$ is a pseudo-Euclidean metric.
A \emph{tensor module} means a $GL(\ste)$-submodule $\stw$ of $\tensor^{k}\std \tensor \tensor^{l}\ste$. Notations for tensors on vector spaces apply equally to sections of the corresponding vector bundles associated with the frame bundle. 
Indices are raised and lowered respecting horizontal position using $h_{ij}$ and the dual symmetric bivector $h^{ij}$ defined by $h^{ip}h_{pj} =\delta_{j}\,^{i}$. The abstract index conventions \cite[chapter $2$]{Penrose-Rindler} are used when helpful but indices are omitted when possible. With these conventions indices are labels indicating tensor type and symmetries and do not refer to any local frame. 
The metric on a tensor module is always that defined by complete contraction with $h_{ij}$, $h(\al, \be) = \lb \al, \be \ra = \al^{i_{1}\dots i_{k}}\be_{i_{1}\dots i_{k}}$, which generally differs from the metric induced by that on a tensor module $\stw$ by a constant factor that depends on the symmetries of the tensors considered. The notation $|\al|^{2}$ and $\lb \al, \be \ra$ mean $h(\al, \al)$ and $h(\al, \be)$, whatever the signature of $h$.

Given a vector space $\ste$ with dual $\std$, the tensor module
\begin{align}\label{mcurvdefined}
\mcurv(\std) = \{\sY_{ijkl} \in \tensor^{4}\std: \sY_{[ij]kl} = \sY_{ijkl} = \sY_{ij[kl]}, \sY_{[ijk]l} = 0\} \subset \tensor^{4}\std
\end{align}
comprises the covariant tensors $\sY_{ijkl}$ having \emph{metric curvature tensor type}. The \emph{Ricci trace} and \emph{scalar trace} are the linear maps $\rictr:\mcurv(\std) \to \tensor^{2}\std$ and $\scal: \mcurv(\std) \to \rea$ defined by $\rictr(\sY)_{ij} = \sY_{pij}\,^{p}$ and $\scal(\sY) = h^{ij}\rictr(\sY)_{ij}$. 

The curvature $\sR_{ijk}\,^{l}$ of a torsion-free connection $\nabla$ on the tangent bundle is defined by $2\nabla_{[i}\nabla_{j]}\om_{k} = -\sR_{ijk}\,^{p}\om_{p}$ for $\om_{i} \in \Ga(\ctm)$ and its \emph{Ricci} curvature is defined by $\sric_{ij} = \rictr(\sR)_{ij} = \sR_{pij}\,^{p}$. The Riemannian curvature tensor of a pseudo-Riemannian metric $h$ is obtained from the curvature of its Levi-Civita connection, $D$, by lowering the last index, $\riem_{ijkl} = \sR_{ijk}\,^{p}h_{pl} \in \Ga(\mcurv(\ctm))$. It takes values in the vector bundle $\mcurv(\ctm)$, and the Ricci and scalar curvatures of $h$ are defined by $\ric = \rictr(\riem)$ and $\scal = \scal(\riem)$. 

\subsection{Classical hierarchy of curvature equations}
The study of a pseudo-Riemannian metric on an $n$-manifold can be framed in terms of the following hierarchy of increasingly weak curvature conditions.
\begin{enumerate}[label=(CCH\arabic*), ref=CCH\arabic*]
\item\label{cch1} \emph{Constant sectional curvature}: 
There is a constant $\ka \in \rea$ such that
\begin{align}\label{constsec}
&\riem + \tfrac{\ka}{n(n-1)}(h\kwedge h) = 0, &
\end{align}
where $(h \kwedge h)_{ijkl} = 2h_{k[i}h_{j]l}$ is the Kulkarni-Nomizu square of $h$.
\item\label{cch2} \emph{Einstein equation}: 
There is a constant $\ka \in \rea$ such that
\begin{align}\label{einstein}
&\sric -\tfrac{\ka}{n} h = 0.& 
\end{align}
\item\label{cch3} \emph{Constant scalar curvature}: The scalar curvature $\scal$ is constant. 
\end{enumerate}
Some observations about this hierarchy follow.
\begin{itemize}[leftmargin=11pt]
\item Tracing \eqref{constsec} shows that if $h$ solves \eqref{constsec} then it solves \eqref{einstein}.
By the traced differential Bianchi identity, if $h$ solves the Einstein equation \eqref{einstein}, then it has constant scalar curvature $\ka$, provided $n > 2$. On the other hand, when $n = 2$ the constancy of the scalar curvature is the basic equation of interest.
The same argument shows that when $n > 2$ the validity of either equation \eqref{constsec} or \eqref{einstein} for a smooth function $\ka$ already implies that $\ka$ must be constant, so it is sometimes better to think of these equations as requiring some curvature ($\riem$ or $\sric$) to be a mutiple (a priori not necessarily constant) of the metric ($h\kwedge h$ or $h$). 

\item By the classical Beltrami theorem, the Levi-Civita connection of a pseudo-Riemannian metric is projectively flat if and only if the metric has constant sectional curvature. This means that the constant sectional curvature equation \eqref{constsec} also can be regarded as a condition of projective flatness.
\end{itemize}
 
For symmetric $2$-tensors $\al, \be \in \Ga(S^{2}\ctm)$, the symmetric, bilinear, curvature-tensor valued generalized Kulkarni-Nomiuz product $\kwedge$ is defined by 
\begin{align}
(\al\kwedge \be)_{ijkl} = \al_{k[i}\be_{j]l} - \al_{l[i}\be_{j]k}.
\end{align}
The \emph{divergence operator} $\div:\Ga(\tensor^{k}\ctm) \to \Ga(\tensor^{k-1}\ctm)$ is defined by
\begin{align}\label{divdefined}
&\div(\om)_{i_{1}\dots i_{k-1}} = D^{p}\om_{pi_{1}\dots i_{k-1}},
\end{align} 
and the \emph{Codazzi operator} $\cod$ on $\Ga(\symk)$ is defined by 
\begin{align}\label{codazzidefined}
\cod(\om)_{iji_{1}\dots i_{k-1}} = D_{[i}\om_{j]i_{1}\dots i_{k-1}}.
\end{align}
The hierarchy \eqref{cch1}-\eqref{cch3} is equivalent, except in specific dimensions, to a hierarchy of equations expressing the vanishing of certain divergence-free parts of curvature tensors. 
 Lemma \ref{divergencefreecurvaturelemma} shows that certain standard consequences of the differential Bianchi identity for a tensor having the symmetries of a metric curvature tensor depend only on this identity and not on whether the tensor is the curvature of a metric. It will also yield integrability conditions for the coupled equations discussed later.

\begin{lemma}\label{divergencefreecurvaturelemma}
On pseudo-Riemannian manifold $(M, h)$, a tensor $\sY \in \Ga(\mcurv(\ctm))$ \emph{satisfies the differential Bianchi identity} if $D_{[m}\sY_{ij]kl} = 0$. In this case there hold
\begin{align}\label{trdby}
&\div\sY= - 2\cod(\rictr(\sY)), &&2 \div\rictr(\sY) = d\scal(\sY),
\end{align}
and the tensors 
\begin{align}
\label{divfreesy}
&\df(\sY) = \sY + 2\rictr(\sY)\kwedge h - \tfrac{1}{2}\scal(\sY)h \kwedge h, &&\ef(\sY)  = \rictr(\sY) - \tfrac{1}{2}\scal(\sY)h,
\end{align}
are divergence free and satisfy
\begin{align}\label{trdf}
&\rictr(\df(\sY)) = (3-n)\ef(\sY), && 2\scal(\df(\sY)) =(n-2)(n-3)\scal(\sY).
\end{align}
\end{lemma}
\begin{proof}
Tracing the differential Bianchi identity in $ml$ yields the first equality of \eqref{trdby} and tracing again in $jk$ yields the second equality of \eqref{trdby}. The point is that it is not necessary that $\sY$ be the curvature tensor of $h$. Straightfoward calculations using \eqref{trdby} show that \eqref{divfreesy} are divergence free. Tracing \eqref{trdby} shows \eqref{trdf}.
\end{proof}

Both $\df$ and $\ef$ are linear operators for which the metric tensor is an eigenvector:
\begin{align}\label{dfefhh}
&\df(h\wedge h) = \tfrac{(n-2)(n-3)}{2}h\wedge h, && \ef(h) = \tfrac{(n-1)(n-2)}{2}h.
\end{align}
The divergence-free part of the Riemann tensor, $\df(\riem)$, figures in the definition of asymptotic mass in \cite{Ge-Wang-Wu, Ge-Wang-Wu-Xia} because $h(\df(\riem),\riem)$ is the Gauss-Bonnet curvature integrand in dimension $4$.

The classical hierarchy of curvature equations is equivalent to the following hierarchy, except for certain particular values of the dimension $n$, as indicated below: 
\begin{enumerate}[label=(DFH\arabic*), ref=DFH\arabic*]
\item\label{dfh1} \emph{Divergence-free curvature equation}: 
There is a constant $\ka \in \rea$ such that 
\begin{align}\label{preeinsteinfield}
\df(\riem) + \tfrac{(n-2)(n-3)}{2n(n-1)}\ka (h \kwedge h) = 0.
\end{align}
\item\label{dfh2} \emph{Source-free Einstein field equations}: 
There is a constant $\ka \in \rea$ such that 
\begin{align}\label{vacuumeinsteinfield}
\efr + \tfrac{n-2}{2n}\ka h = 0,
\end{align}
where $\efr = \ef(\riem) = \sric - \tfrac{1}{2}\scal h$ is the \emph{Einstein tensor}. 

\item\label{dfh3} \emph{Constant scalar curvature}: The scalar curvature $\scal$ is constant. 
\end{enumerate}

Some observations about this hierarchy follow.
\begin{itemize}[leftmargin=11pt]
\item The equation \eqref{vacuumeinsteinfield} is the source free Einstein field equation with cosmological constant $\cosmo = \tfrac{n-2}{2n}\ka$.

\item When $n = 2$, it follows from $\riem = (\scal(\riem)/2)h\kwedge h$ and $\rictr(\riem) = (\scal(\riem)/2)h$ that $\df(\riem) = 0$ and $\ef(\riem) =0$, so the equations \eqref{preeinsteinfield} and \eqref{vacuumeinsteinfield} are both vacuous in this case.

\item When $n = 3$, $\df(\riem)$ equals the conformal Weyl tensor (the trace-free part of the Riemann tensor), which vanishes when $n = 3$ by \cite[Theorem $5.7.A$]{Weyl}, so equation \eqref{preeinsteinfield} is vacuous in this case.

\item When $n > 3$, if $h$ solves \eqref{preeinsteinfield} then it solves the source-free Einstein field equations \eqref{vacuumeinsteinfield}, for, by \eqref{trdf}, taking the Ricci trace of \eqref{preeinsteinfield} shows that $0 = (n-3)(\ef(\riem) + \tfrac{n-2}{2n}\ka h)$.

\item When $n > 2$, tracing \eqref{vacuumeinsteinfield} shows that it implies that $\scal(\riem) = \ka$ is constant and there holds the Einstein equation \eqref{einstein}.

\item The constant sectional curvature equation \eqref{constsec} with constant $\ka$ implies the equation \eqref{preeinsteinfield} with the same constant $\ka$, while when $n > 3$ the converse holds. For all $n$, applying $\df$ to the constant sectional curvature equation \eqref{constsec} and using \eqref{dfefhh} shows that $h$ solving \eqref{constsec} solves \eqref{preeinsteinfield}. 
When $n > 3$, if $h$ solves \eqref{preeinsteinfield}, then it solves \eqref{constsec}, as can be seen as follows. Applying $\scal$ and $\rictr$ to \eqref{preeinsteinfield} and using \eqref{trdf} yield
$(n-2)(n-3)(\scal(\riem) - \ka) =0$ respectively $(n-3)(\ef(\riem) + \tfrac{n-2}{2n}\ka h) = 0$, so that $\scal = \ka$ and $\ric = (\ka/n)h$, provided $n > 3$. Using this and \eqref{divfreesy} to rewrite \eqref{preeinsteinfield} in terms of $\riem$ yields \eqref{constsec}.

\item When $n > 2$, either of the equations \eqref{einstein} and \eqref{vacuumeinsteinfield} implies the other with the same value of $\ka$ and moreover in either case the scalar curvature must be constant and equal to $\ka$: if there holds \eqref{einstein}, then, by \eqref{trdf}, taking its trace shows $(n-2)D_{i}\scal(\riem) = 0$, while if there holds \eqref{vacuumeinsteinfield}, takings its trace shows directly $(n-2)(\scal(\riem) - \ka) = 0$.
\end{itemize}

The two equivalent ways of writing the hierarchy of equations reflect different perspectives. The first, more geometric, asserts that some curvature tensor is a multiple of the metric or some tensor build from it, while the second, more physical, asserts that the divergence free part of the same curvature tensor Ricci tensor is a multiple of the metric or some tensor built from it. The two formulations are equivalent except possibly in dimension $2$ or $3$.

The proposed hierarchies of equations \eqref{gs1}-\eqref{gs2} coupling a pseudo-Riemannian metric to an auxiliary tensor having prescribed symmetries specialize to \eqref{cch1}-\eqref{cch3} when the coupled tensor vanishes identically. From the divergence-free formulation of the original hierarchy it is clear that such a coupling must imply, as an integrability condition, some partial differential equations for the coupled tensor. In general such a condition is more general than (that is, weaker than) that imposed by requiring the vanishing of some generalized gradients, but such conditions are more tractable and understandable, if only because they are linear.

The forms of the couplings at the Ricci level of the hierarchy generalize the special case in which the auxiliary tensor is a $2$-form, regarded as the electromagnetic $2$-form, coupled with a metric as in the Einstein-Maxwell equations. In order to motivate what form the general hierarchy might take, it is convenient to describe first a hierarchy that includes the usual Einstein-Maxwell equations, before describing the general scheme that includes those mentioned as special cases.

The physical source free Einstein field equations are the special case of the more general \emph{Einstein field equations with energy momentum tensor $\sT$ and cosmological constant $\cosmo$},
\begin{align}\label{einsteinT}
 & \efr + \cosmo h = \T,& 
\end{align}
in which $\efr = \ef(\riem)$ and the divergence-free symmetric two-tensor $\T$ is the \emph{stress-energy} tensor encoding the matter fields interacting with the gravitational field. In many cases, $\T$ is a quadratic expression in the components of some auxiliary tensor with given symmetries. 

For example, if $F_{ij}$ is an antisymmetric two-form, $\T(F) = -F\circ F - \tfrac{1}{4}|F|^{2}h$  is symmetric, where $(F\circ F)_{ij} = F_{i}\,^{p}F_{pj}$. 
It is divergence-free if $F$ is both closed and coclosed and in this case the source-free \emph{Einstein-Maxwell} equations are
\begin{align}\label{einsteinmaxwellT}
&dF = 0, && d\star F = 0,& &\efr + \cosmo h = \T, && \cosmo = \tfrac{2-n}{2n}\left(\scal - \tfrac{n-4}{2(n-2)}|F|^{2}\right).
\end{align}
or, in geometric form,
\begin{align}\label{einsteinmaxwell}
&dF = 0, && d\star F = 0,& &\sric - F\circ F  = \tfrac{1}{n}(\scal - |F|^{2})h. 
\end{align}
Although equivalent except when $n = 2$, the forms \eqref{einsteinmaxwellT} and \eqref{einsteinmaxwell} of the equations reflect respectively a physical or a geometric perspective. \emph{Source-free} refers to the vanishing of $dF$ and $d\star F$. Relaxations of these requirements will be discussed a bit further on in the context of the supergravity equations. Because the Einstein tensor is divergence free, that there hold $\efr + \cosmo h = \T$ implies some nonlinear partial differential equation for $F$ in which $F$ figures quadratically. The source-free Einstein-Maxwell equations and their relaxations described later are a priori stronger, because they require $F$ to satisfy linear or quasilinear partial differential equations.

It is reasonable to ask if there are equations at the full curvature level \eqref{gs1} analogous to \eqref{constsec} whose solutions necessarily solve \eqref{einsteinmaxwellT}. 

For $p \geq 2$, define a symmetric bilinear map $\cdot:\ext^{p}\std \times \ext^{p}\std \to \mcurv(\std)$ by
\begin{align}\label{kformphi}
(\al \cdot \be)_{ijkl} = \tfrac{1}{3}\left(\al_{k[i}\,^{A}\be_{j]lA} - \al_{l[i}\,^{A}\be_{j]kA} - \al_{ij}\,^{A}\be_{klA} - \be_{ij}\,^{A}\al_{klA}\right).
\end{align}
in which the single index $A$ abbreviates $a_{1}\dots a_{p-2}$ and its repetition indicates $(p-2)$fold contraction. There hold
\begin{align}
&\rictr(\al\cdot \be)_{ij} = \al_{(i}\,^{a_{1}\dots a_{p-1}}\be_{j)a_{1}\dots a_{p-1}},& &\tr_{h}\rictr(\al\cdot \be) = h(\al, \be).
\end{align}
Associate with $F \in \Ga(\ext^{p}\ctm)$ the \emph{stress-energy} tensor defined by
\begin{align}
\T(F) = \rictr(F\cdot F) - \tfrac{1}{2p}h(F, F)h.
\end{align}
In \cite{Baird}, Baird showed that, for $X \in \Ga(TM)$,
\begin{align}\label{bairdidentity}
\begin{aligned}
h(X, \div(\T)) &= h(\imt(X)F, \div F) - \tfrac{1}{p}h(\imt(X)dF, F) = h(\sbl_{\div}(X)(F), \div F) - \tfrac{1}{p}h(dF, \sbl_{d}(X)(F)).
\end{aligned}
\end{align}
where $\imt(X)$ is interior multiplication in the first slot, $\div$ is the divergence operator defined in \eqref{divdefined}, and $\sbl_{d}(X)(F)$ and $\sbl_{\div}(X)(F)$ denote the symbols of $d$ and $\div$ applied to $X$ and $F$.

By \eqref{bairdidentity}, if $F$ is closed and coclosed then $\T(F)$ is divergence free. More generally, $\div \T(F) = 0$ if each of $dF$ and $\div F$ is orthogonal to its symbol, $\sbl_{d}(X)(F)$ and $\sbl_{\div}(X)(F)$, for all $X$ (for an example of this nature see \eqref{supergravity} below). 

For $\ep \in \{\pm 1\}$, the following hierarchy of equations specializes to the Einstein-Maxwell equations when $\ep = 1$ and specializes to the usual constant curvature hierarchy when $F \equiv 0$:
\begin{flalign}
\tag{EM1}\label{coupledEMp} 
&\emph{Coupled projective flatness}:& &dF = 0, &&d\star F = 0,&&\riem - \ep (F\cdot F) = -\tfrac{\ka}{n(n-1)}h\kwedge h,  &
\\
\tag{EM2}\label{einsteinmaxwellp}  
&\emph{Coupled Einstein equations}:& &dF = 0, &&d\star F = 0,&&\efr + \cosmo h = \ep \T \iff 
\sric - \ep \rictr(F\cdot F)  = \tfrac{\ka}{n}h ,
\\
\tag{EM3}\label{scalarEMp} 
&\emph{Constraint equations}: & &dF = 0, &&d\star F = 0,& & \cosmo = \tfrac{n-2}{2n}\left(\scal - \ep \tfrac{n-2p}{p(n-2)}h(F, F)\right) \,\,\text{is constant}.
\end{flalign}
Because the coupling is quadratic in $F$, rescaling $F$ cannot change the sign $\ep$, which is the reason for including this additional parameter.

The identity \eqref{bairdidentity} yields a weak consistency condition on the curvature equation at the Ricci/Einstein level \eqref{einsteinmaxwellp}. Whatever auxiliary equations are imposed on $F$ must also satisfy this condition. A similar tautological consistency condition can be deduced from the full curvature equation \eqref{coupledEMp}, which forces that $F\cdot F$ satisfy the differential Bianchi identity. This is not implied by the equations $dF = 0$ and $d\star F = 0$ and means that \eqref{coupledEMp} is sufficiently overdetermined that it is not self-evident that solutions with nonvanishing $F$ exist. However, if $F$ is parallel, then $F\cdot F$ satisfies the differential Bianchi identity and such solutions are exhibited below.  

For exterior $2$-forms there are three generalized gradients, $d$, $\div$ (which is a multiple of $\star d \star$), and $\T(F)_{ijk}$ equal to a multiple of $D_{[i}F_{j]k} - D_{[i}F_{jk]}$, as indicated schematically in \eqref{ext2grad}:
\begin{align}\label{ext2grad}
\begin{aligned}
\underset{D\om}{\underbrace{\ydiagram{1}\tensor\ydiagram{1,1}}} = \underset{\text{$dF$}}{\underbrace{\ydiagram{1, 1, 1}}} + \underset{\text{$\T(F)$}}{\underbrace{\ydiagram{2, 1}}} + \underset{\text{$\div(F)$}}{\underbrace{\ydiagram{1}}}
\end{aligned}
\end{align}
The vanishing of the generalized gradients $d$ and $d\star$ alone is insufficient to imply automatic consistency; there must vanish the third generalized gradient corresponding to the Young diagram of the partition $3 = 2 + 1$.

For Lorentzian signature $h$ the Einstein-Maxwell equations are physical and admit a great diversity of much studied solutions. When $h$ is Riemannian it is an observation due to \cite{Flaherty} in the scalar curvature zero case and \cite{Lebrun-einsteinmaxwell} in the general constant scalar curvature case that if $(M, h, J)$ is a $4$-dimensional Kähler manifold with Kähler form $\om(\dum, \dum) = h(J\dum, \dum)$ constant scalar curvature $\scal$ and Ricci-form $\ricform(\dum, \dum) = \sric(J\dum, \dum)$, then $(h, F)$ solves \eqref{einsteinmaxwell} where 
\begin{align}
F = \om + \tfrac{1}{2}(\ricform - \tfrac{1}{4}\scal \om)
\end{align}
is the sum of the Kähler form and half the primitive part of the Ricci form.

In a similar manner it can be shown that there exist nontrivial solutions to the more restrictive equations \eqref{coupledEMp}. By definition, a Kähler structure $(h, J)$ has constant holomorphic sectional curvature $4\ka$ if and only if
\begin{align}
&\riem - 3\ka (\om \cdot \om) = -\ka (h \kwedge h).& 
\end{align}
In this case $\ka = \scal/24$ and:
\begin{itemize}
\item If $\ka > 0$ (positive constant holomorphic sectional curvature) then, for $F = \sqrt{3\ka}\om$, $(h, F)$ solves the coupled projectively flat equations \eqref{coupledEMp} with $\ep = 1$;
\item If $\ka < 0$ (negative constant holomorphic sectional curvature) then, for $F = \sqrt{-3\ka}\om$, $(h, F)$ solves the coupled projectively flat equations \eqref{coupledEMp} with $\ep = -1$.
\end{itemize}
Although it is not the objective here to discuss in detail the equations for $p$-forms \eqref{coupledEMp}-\eqref{scalarEMp}, it would be interesting to know what are all the solutions of \eqref{coupledEMp} or \eqref{einsteinmaxwellp}, in particular whether in Riemannian signature when $p = 2$ there are any solutions to \eqref{einsteinmaxwellp} besides the Flaherty-LeBrun solutions.

As mentioned, in some cases the stress-energy tensor remains divergence-free under a relaxation of the coclosed condition. For example, if $n+1 = 3p$ and $c$ is a constant the equations
\begin{align}\label{supergravity}
&dF = 0, &&d\star F = c F\wedge F,&&\efr + \cosmo h = \ep \T(F) , \iff 
\sric - \ep \rictr(F\cdot F)  = \tfrac{\ka}{n}h ,
\end{align}
imply that $\T$ is divergence free by \eqref{bairdidentity}. The vanishing of the term $h(\imt(X)F, \div(F))$ reduces to $\imt(X)(F \wedge F \wedge F) = 0$, which is true because $3p > n$. Replacing $d\star F = 0$ by $d \star F = c F \wedge F$ in \eqref{coupledEMp} and \eqref{scalarEMp} yields a hierarchy of equations generalizing the hierarchy \eqref{coupledEMp}-\eqref{scalarEMp} and specializing to it when $c = 0$.

The equations \eqref{supergravity} are a variant of the usual \emph{supergravity equations} \cite{Alekseevsky-Chrysikos-Taghavi-Chabert, Cremmer-Julia-Scherk, Figueroa-O'Farrill-Santi, Santi, Figueroa-O'Farrill-Santi-sasakian}. The cited references use different conventions for the norms and exterior products than those used here, and such differences are not geometrically or physically meaningful, but given such choices fixing a value of $c$ fixes a scaling that is consequential. More precisely, for a particular choice of the constant $c$ the equations \eqref{supergravity} in the case $p = 4$ recover the $n = 11$ supergravity equations as in \cite{Figueroa-O'Farrill-Santi, Santi}. This particular value of $c$ is determined by the requirement of invariance with respect to the Poincare superalgebra; precisely it is necessary for the cocycle condition in \cite[Proposition $7$]{Figueroa-O'Farrill-Santi}. On the other hand, in the discussion surrounding the proof of \cite[Theorem $4.1$]{Figueroa-O'Farrill-Santi-sasakian} it is shown that certain $11$-dimensional Lorentzian Sasakian manifolds with $F$ an appropriate multiple of the pullback of the square of the Kähler form on the base of the characteristic fibration yield a solution of \eqref{supergravity}, although for a particular constant $c$ in general having numerical value and sign different from those of the usual supergravity equations as in \cite{Figueroa-O'Farrill-Santi}. More generally still, \cite{Figueroa-O'Farrill-Santi-sasakian} considers the modified equation $d \star F - c F \wedge F = p$ in which $p$ is a closed $8$-form built from the first and second Pontryagin forms of a Lorentzian Sasakian manifold. Under a technical hypothesis on the base of the characteristic fibration the form $p$ is a multiple of $F \wedge F$, and together \cite[Proposition $4.3$ and Theorem $4.5$]{Figueroa-O'Farrill-Santi-sasakian} yield more solutions of \eqref{supergravity}.

In the just mentioned generalized supergravity setting, as for the Einstein-Maxwell hierarchy or for the hierarchy for trace-free Codazzi tensors described below, the scaling of the auxiliary tensor coupled to the metric can be fixed up to the sign here denoted by $\ep$ (although in a setting where the auxiliary tensor has some physical meaning such a scaling may already be regarded as inadmissible), but this sign depends on underlying geometric or physical considerations, and properties and existence of solutions depend on it in essential ways.

\subsection{Ingredients of the general scheme}\label{generalschemesection}
At each level \eqref{gs1}-\eqref{gs3} the general scheme consists of a curvature equation and a set of generalized gradients.

The generalized gradients arise as orthogonal projections of $D\om$.
Those that appear (the set $\wtse$) cannot be chosen freely because consistency of the couplings forces that they depend on the choice of the quadratic expression $\Phi(\om, \om)$. Formally, taking the derivative of $\Phi(\om, \om)$ yields 
\begin{align}\label{formalgrad}
D\Phi(\om, \om) = 2\Phi(\om, D\om) = 2 \Phi(\om, \sum_{\si}\A_{\si}(\om))
\end{align}
and the curvature equation together with the differential Bianchi identity forces the vanishing of some expression derived from this formal sum, in which there will figure nontrivially certain of the generalized gradient operators. Generally there will be considered a hierarchy of curvature equations that can be deemed full curvature, Einstein, and constraint equations, each obtained formally from the preceding by taking traces. The consistency conditions for the auxiliary equations on $\om$ arise as tautological consequences of the differential Bianchi identities applied to the curvature equations. For example, at the Einstein level the consistency equations derive from the vanishing of the divergence of the Einstein tensor of $h$, and will take a form generalizing the identity \eqref{bairdidentity}. Which generalized gradients can appear may be different depending on the level in the hierarchy. Consistency is easier for the Einstein and constraint equations than for the full curvature equations - as at these levels the consistency equations depend only on the traced differential Bianchi identities - and correspondingly a greater variety of auxiliary constraints are possible at these levels. On the other hand, this remark suggests that the most interesting choices are those that yield consistency at all levels. In these cases the fundamental equation is that at the top level, expressing the projective Weyl tensor of the metric in terms of some quadratic expression $\phi(\om, \om)$ and the other equations are derived from it by taking traces. The main focus of this article is such a hierarchy for trace-free Codazzi symmetric tensors. On the other hand, for trace-free Killing tensors, a for exterior differential forms, consistency at the top level is not guaranteed simply by the vanishing of certain generalized gradients, although a consistent formulation at the Einstein level is available. 

For simplicity the general scheme is described for a Riemannian $n$-manifold $M$ with $O(n)$-frame bundle $\frameb \to M$. With minor modifications most of the discussion can be adapted for oriented manifolds or spin manifolds, in the latter cases replacing $O(n)$ by $SO(n)$ or $Spin(n)$. Likewise it could be adapted to pseudo-Riemannian manifolds, replacing $O(n)$ by an orthogonal group of appropriate signature. In these cases some of the representation theory used in the definite signature case becomes more complicated, with corresponding complications for the general scheme described.

The following background is repeated from Branson's \cite{Branson-steinweiss} where it is explained lucidly.
The associated bundle $(\tau, \ste)$ of the standard representation of $O(n)$ is $T^{\ast}M = \frameb \times_{\tau}\ste$. Because each irreducible $O(n)$-module $(\rho, \stw)$ appears as a summand of some tensor power of the standard representation, the associated bundle $\W(\rho) = \frameb\times_{\rho}\stw$ is a subbundle of some tensor bundle whose sections are trace-free tensors. (Were $M$ oriented and were $O(n)$ replaced by $SO(n)$, in some specific cases, e.g. forms of middle dimension, it would be a subbundle of such a bundle.) The covariant derivative $D$ maps a section $\om$ of $\W(\rho)$ to a section of $T^{\ast}M\tensor \W(\rho)$, which is the bundle associated to the tensor product $O(n)$-module $(\tau \tensor \rho, \ste \tensor \stw)$. This tensor product decomposes into a direct sum $\oplus_{\si \in \wts(\rho)} \W(\si)$ of vector bundles associated with irreducible $O(n)$-modules each appearing with multiplicity one (see \cite{Branson-steinweiss}; this follows from the Pieri rule). The orthogonal projection $\A_{\si}(\om)$ of $D\om$ on one of the summands is a \emph{generalized gradient} or \emph{Stein-Weiss operator} in the sense of \cite{Branson-steinweiss, Calderbank-Gauduchon-Herzlich, Stein-Weiss}. The generalized gradients corresponding to a given Young diagram are indexed by the Young diagrams obtained from the given one by adding to it a single box or deleting from it a single box in such a way as to obtain a new Young diagram. (While this is correct for $O(n)$-modules, for $SO(n)$-modules there are further possibilities because on a manifold of even dimension $n$, the module of exterior differential forms of rank $n/2$ splits as a sum of self-dual and anti-self-dual forms.)

In order to analyze what conditions can be imposed in terms of generalized gradients, it is necessary to examine the possibilities for the quadratic map $\Phi$ appearing in \eqref{formalgrad}. This is possible separately at the different levels in the hierarchy. Here it will be studied at the most complicated top level, the Ricci curvature level being similar but simpler.

Suppose $\phi:\stw \times \stw \to \mcurv(\std)$ is a symmetric bilinear $O(n)$-module map. Because $\tr_{h} \rictr(\phi(\om, \om))$ is an $O(n)$-invariant quadratic form on the irreducible $O(n)$-module $\stw$, it must be constant multiple of $|\om|^{2}$. It is convenient to normalize $\phi$ by requiring that $\tr_{h} \rictr(\phi(\om, \om)) = h(\om, \om)$ for all $\om \in \Ga(\W)$. 

Given a pseudo-Euclidean metric $h$ on a tensor module $\stw$, $\stw_{0}$ denotes the $O(h)$-submodule comprising elements of $\stw$ for which there vanish all traces and contractions with $h_{ij}$ and $h^{ij}$. The convention here is that $\ste_{0}$ equals $\ste$. For any tensor module $\stw$ with determined symmetries and valency, $\tf \in \eno(\stw)$ denotes the $h$-orthogonal projection onto the submodule $\stw_{0} \subset \stw$ comprising completely trace-free tensors of the same symmetries and valency. 

The choice of $\phi$ is an important piece of data for the equations to be described. In general the equations will be most interesting when the trace-free part $\tf \phi(\om, \om)$ need not vanish, so yields a nontrivial map $\tf \phi: \stw \times \stw \to \mcurv_{0}(\std)$ into the irreducible (if $n > 4$) $O(n)$-module $\mcurv_{0}(\std) = \mcurv(\std)\cap \ker \rictr$ of Weyl curvature tensors (trace-free curvature tensors). To know that there are nontrivial possibilities it needs to be that the dimension of the space of $O(n)$-equivariant symmetric bilinear maps $\stw \times \stw \to \mcurv_{0}(\std)$ is positive (at the Einstein level one would consider instead $O(n)$-equivariant symmetric bilinear maps $\stw \times \stw \to S^{2}_{0}(\std)$). The problem of calculating this dimension is a special case of the problem of calculating what are called the triple multiplicities for the orthogonal group $O(n)$, namely the the dimensions of the invariant subspaces of threefold tensor products of irreducible $O(n)$-modules. There is a huge literature on this and related problems, for example \cite{Berenstein-Zelevinsky, Littelmann-pathsroots}, which yields methods in principle adequate for calculating the dimension in the specific case of interest here, but which remain complicated. As even in the case of successful realization their development would require considerable space, and the objective here is only to argue that the data $\phi$ appearing in the equations described later is in general nontrivial, only some special cases are described. On the other hand, to implement the scheme there is needed a basis of the space of such equivariant maps, not only the dimension of the space, and it is not clear how effective is the available general machinery in this regard.

In general, if $\om_{i_{1}\dots, i_{k}}$ has some specified symmetries, contracting it with itself on some subset of $k-2$ indices and applying a linear combination of these two maps yields a map $\phi$ of the desired sort, although proving its nontriviality may require some work. A precise statement follows. Let $\ste$ be the standard representation of $O(n)$ and let $\stw \subset \tensor^{k}\std$ be a nontrivial irreducible $O(n)$-module. If $k \geq 2$, there is a nontrivial $O(n)$-equivariant symmetric bilinear map $\phi:\stw \times \stw \to \mcurv(\std)$. The elements of $\stw$ can be identified with covariant tensors $\om_{i_{1}\dots i_{k}}$, trace-free with respect to the $O(n)$ invariant metric $h$ on $\std$, having the symmetries encoded by a filling by the indices $i_{1}, \dots, i_{k}$ of a Young diagram with nonincreasing row lengths given by a partition of $k$. The tensors of $\stw$ are antisymmetric in the indices filling any given column of the Young diagram, while there vanishes their antisymmetrization over a set of indices comprising the indices in a column and any index in a box to the right of the given column. If $\stw$ corresponds to a Young diagram with more than one row, then, after an isomorphism given by reordering the indices, it can be supposed that its constituent tensors are antisymmetric in their first two indices, $\om_{iji_{1}\dots i_{k-2}} = \om_{[ij]i_{1}\dots i_{k-2}}$. The product
\begin{align}\label{alcdotbedefined}
(\al \cdot \be)_{ijkl} = \tfrac{1}{2}\left(\al_{k[i}\,^{A}\be_{j]lA} - \al_{l[i}\,^{A}\be_{j]kA} - \al_{ij}\,^{A}\be_{klA} - \be_{ij}\,^{A}\al_{klA}\right),
\end{align}
in which $A$ abbreviates $a_{1}\dots a_{k-2}$, satisfies $\rictr(\al \cdot \be)_{ij} = \al_{(i}\,^{p_{1}\dots p_{k-1}}\be_{j)p_{1}\dots p_{k-1}}$ and $\tr_{h}\rictr(\al \cdot \be) = h(\al, \be)$. By the nondegeneracy of $h$ this shows that if $\al \neq 0$ there exists $\be \in \stw$ such that $\al \cdot \be \neq 0$. It is not obvious, and this argument does not show, that $\tf \phi$ is nontrivial, but in many cases this is in fact true provided $k \geq 2$ and $n \geq 4$. 

The preceding argument omits the case most relevant here, $\stw = S^{k}_{0}\std$. For $k \geq 2$, define a symmetric bilinear \emph{generalized Kulkarni-Nomizu product} $\kwedge:S^{k}(\std) \times S^{k}(\std) \to \mcurv(\std)$ by 
\begin{align}\label{kwedgedefined}
(\al \kwedge \be)_{ijkl} = \al_{k[i}\,^{A}\be_{j]lA} - \al_{l[i}\,^{A}\be_{j]kA}.
\end{align}
in which $A$ abbreviates $a_{1}\dots a_{k-2}$ as in \eqref{alcdotbedefined}. (When $k = 2$ the map $\kwedge$ is half what is usually called the Kulkarni-Nomizu product.)
It is immediate from the definition that $(\al \kwedge \be)_{[ijk]l} = 0$, so that $\al \kwedge \be \in \mcurv(\std)$. There hold
\begin{align}\label{rictralbe}
\begin{aligned}
\rictr(\al \kwedge \be)_{ij} &= \al_{(i}\,^{p_{1}\dots p_{k-1}}\be_{j)p_{1}\dots p_{k-1}}
- \tfrac{1}{2}\al_{ij}\,^{p_{1}\dots p_{k-2}}\tr(\be)_{p_{1}\dots p_{k-2}} - \tfrac{1}{2}\be_{ij}\,^{p_{1}\dots p_{k-2}}\tr(\al)_{p_{1}\dots p_{k-2}},\\
\scal(\al \kwedge \be) & = \lb \al, \be \ra - \lb \tr\al, \tr \be \ra,
\end{aligned}
\end{align}
which yield the corresponding quadratic expressions needed for the lower levels of the hierarchy.
If $k \geq 2$, the restricted product $\kwedge: S^{k}_{0}\std \times S^{k}_{0}\std \to \mcurv(\std)$ can be taken as $\phi$ and by \eqref{rictralbe} satisfies the normalization $\tr_{h} \rictr(\om \kwedge \om) = h(\om, \om)$.
Lemmas \ref{tfkwedgenontriviallemma} and \ref{kwedgelemma} show that if $\dim \std \geq 4$, then the orthogonal projection of $\om \kwedge \om$ onto the space of trace-free curvature tensors is nontrivial and any such $O(n)$-equivariant product is built from this map and its traces. These lemmas give an alternative and more concrete proof of the more abstract Lemma \ref{tensormultlemma} which, while far from an optimal statement, serves to illustrate what needs to be proved in general and how it might be proved.

\begin{lemma}\label{tensormultlemma} 
Let $(\ste, h)$ be an $n$-dimensional Euclidean vector space. Let $\stw_{\la} \subset \tensor^{k}\std$ be an irreducible $O(n)$-module of $h$-trace-free tensors with symmetries corresponding with a filling of a Young diagram whose row lengths are given by an integer partition $\la = (\la_{1}^{m_{1}} \dots \la_{r}^{m_{r}})$ of length $r = |\la|$. If $2 \leq k$ and $r \leq n/4$, there is a nontrivial $O(n)$-equivariant symmetric bilinear map $\phi: \stw_{\la}\times \stw_{\la}\to \mcurv_{0}(\std)$, and such a map is unique up to scale in the case of a Young diagram of one row corresponding with $\stw_{(k)} = S^{k}_{0}\std$.
\end{lemma}

\begin{proof}
By \cite[Theorem $2.5.3$]{Koike-Terada} or \cite[Section A.$6$]{Littelmann-littlewoodrichardson} (which refer to $SO(n, \com)$ modules) and that standard relation between irreducibles of a simple complex Lie group and its compact form, the multiplicity of $\mcurv_{0}(\std)$ as a summand in the decomposition of $\stw_{\la}\times \stw_{\la}$ into $O(n)$-irreducibles is given by the Newell-Littlewood number $N_{\la, \la, (2, 2)}$ in the \emph{stable range} where $2r = 2|\la| \leq n/2$. By standard arguments For partitions $\mu, \nu, \tau$ of $n$, the Newell-Littlewood number $N_{\mu, \nu, \tau}$ is defined by 
\begin{align}\label{nlformula}
N_{\mu, \nu, \tau} = \sum_{\al, \be, \ga}c_{\al, \be}^{\mu}c_{\be, \ga}^{\nu}c_{\ga, \al}^{\tau}, 
\end{align}
where the sum runs over partitions of $n$ and $c_{\al, \be}^{\mu}$ is the usual Littlewood-Richardson number (the structure constants for the ring of symmetric functions with respect to the Schur basis) \cite{Gao-Gidon-Yong, Gao-Gidon-Ressayre-Yong}. In private correspondence, Alexander Yong explained to me a proof, using the formula \eqref{nlformula}, the nonnegativity of the Littlewood-Richardson numbers, properties of skew-Schur functions, and the usual Pieri rule, that $N_{\nu, \nu, (2, 2)} \geq 1$ if and only if $\nu \neq (1)$ (the partition with one box) with equality if and only if $\nu$ is a row or a column. Applied in the current setting this shows that $N_{\la, \la, (2, 2)} \geq 1$ with equality if and only if $\la = (k)$ or $\la = (1^{k})$. Because $|(k)| = 1$, this gives the desired conclusion when $|\la| \leq n/4$. By \cite[Theorem $2.5.3$]{Koike-Terada} this restriction is sufficient to conclude that the desired multiplicities are given by the Newell-Richardson numbers. 
\end{proof}

There is a large literature calculating the triple multiplicities for threefold tensor products of irreducibles for classical groups as in \cite{Berenstein-Zelevinsky}. Outside the stable range there are formulas that in principle serve to calculate the desired multiplicities available in \cite{Koike-Terada, Littelmann-littlewoodrichardson, Littelmann-pathsroots, Newell} and other sources, but developing them here sufficiently to prove the desired claim would require too much space. The basic issue is that outside the stable range partitions do not uniquely determine $O(n)$-irreducibles - some columns are longer than $n/2$, and applying the Hodge star over the antisymmetric indices in a such columns yields via the diagram folding procedure described precisely in \cite{Koike-Terada} an equivalent irreducible. Such equivalencies can result in hidden cancellations that complicate calculating multiplicities. In indefinite signature there are additional complications generated by nonpositivity. Nonetheless, it seems likely that the claim of Lemma \ref{tensormultlemma} remains true with only very minor conditions provided the diagram of the partition $\la$ contains at least two boxes.

Qualifications aside, the preceding serves on the one hand to support that nontrivial choices of $\phi$ exist quite generally, and on the other hand to show that for $\stw = S^{k}_{0}\std$, the normalization $\tr_{h} \rictr(\om \kwedge \om) = h(\om, \om)$ determines the trace-free part of $\phi$ uniquely as the trace-free part of $\om \kwedge \om$.

To write down the hierarchy of equations to be considered, there are needed, in addition to $\phi$:
\begin{itemize}
\item a set $\wtse \subset \wts(\rho)$ parametrizing the generalized gradients and depending on $(\rho, \stw)$ and $\phi$; and
\item a stress-energy tensor associated with $\stw$ and $\phi$ and having the form
\begin{align}
\T(\om) = \rictr(\phi(\om, \om)) + \tfrac{1}{2r}h(\om, \om)h,
\end{align}
where the nonzero constant $r$ is determined by $(\rho, \stw)$ and the choice of $\phi$ via the requirement that the divergence of $\T(\om)$ vanish. 
\end{itemize}
The sign and value of the nonzero integer $r$ depend on the number of rows and columns in the Young diagram corresponding with $\stw$ and also the set $\wtse$ specifying summands in the decomposition of $\ste \tensor \stw$ into irreducibles. Initially it can seem surprising that for $\om$ having a particular symmetry type there may be more than one choice for $\T(\om)$, but this occurs already for trace-free symmetric tensors, for which there are two possible stress-energy tensors, one corresponding with the Codazzi operator, the other with the Killing operator. See \eqref{stmpdefined} for the precise values of $r$ when $\stw = S^{k}_{0}\std$ and $\phi = \kwedge$ for the two corresponding sets $\wtse$.

The hierarchy of equations \eqref{gs1}-\eqref{gs3} specializes to the usual constant curvature hierarchy when $\om \equiv 0$. Each equation comprises two parts, a \emph{curvature equation} and equations on the auxiliary tensor $\om$.
The hierarchy is written so that formally tracing the curvature equation at each level yields the curvature equation at the subsequent level. This means that any choice at one level fixes the choices at lower levels. It is possible as well to consider the two-step truncated hierarchy beginning at the Einstein level; in this case the data that needs to be chosen is not $\phi$ but an $O(n)$-equivariant symmetric bilinear map $\stw \times \stw \to S^{2}_{0}(\std)$. 

In each equation of the hierarchy, that $(h, \om)$ satisfy the curvature equation need not imply that $\om$ satisfy the equations $\A_{\si}(\om) = 0$, $\si \in \wtse$, but in general it does imply some weaker equations, in the Einstein case via an identity like \eqref{generalizedbaird} derived from the condition $\div \T(\om) = 0$ and in the full curvature case via the differential Bianchi identity for the Levi-Civita connection. This means that the consistency of the curvature equations and the equations for $\om$ is not automatic and forces some conditions on the set $\wtse$. The form of such conditions is most easily seen at the Einstein level.

At the full curvature level, the curvature equation in \eqref{gs1} implies that $\phi(\om, \om)$ is in the kernel of the differential Bianchi operator. Because $\phi(\om, \om)$ is quadratic in $\om$ and the differential Bianchi operator is first order, working out what it means for the former to be in the kernel of the latter yields via \eqref{formalgrad} the vanishing of a sum of generalized gradients of $\om$ paired in various ways with $\om$.

As shown by Lemma \ref{consistencylemma} below, in the particular case of completely symmetric trace-free tensors and the Codazzi type generalized gradients the differential Bianchi identity follows from the vanishing of the generalized gradients so that the coupled equations are automatically consistent even at the level of the full curvature tensor. This automatic consistency is a distinguishing feature of the completely symmetric case.

Likewise, for the consistency of the coupled Einstein equations, there must be a set $\wtse \subset \wts(\rho)$, depending on $(\rho, \stw)$ and possibly $\phi$, such that, for some constants $\be_{\si} \in \rea$ (that also depend on $(\rho, \stw)$),
\begin{align}\label{generalizedbaird}
&h(X, \div \T(\om)) =\sum_{\si \in \wtse} \be_{\si} h(\sbl_{\A_{\si}}(X)(\om), \A_{\si}(\om)).
\end{align}
in which $\sbl_{\A}(X)(\om)$ is the symbol of the first-order differential operator $\A$ in the direction $X \in \Ga(TM)$.

As is apparent from \eqref{generalizedbaird}, for \eqref{gs1} and \eqref{gs2} to imply \eqref{gs3} it is not necessary that $\A_{\si}(\om)$ vanish for $\si \in \wtse$, rather only that some linear combination of $h(\sbl_{\A_{\si}}(X)(\om), \A_{\si}(\om))$ vanish for $\si \in \wtse$. The previously mentioned example of the supergravity equations is an interesting example in which this extra generality applies. With such a condition in place of the vanishing of the $\A_{\si}(\om)$ the auxiliary equations of the hierarchy will be said to be \emph{relaxed}.

All of the preceding can be formulated in more generality. First, the orthogonal group can be replaced by the orthogonal group of a metric of indefinite signature. Second, it can be supposed that $M$ has a spin structure and there can be used sections of the bundles associated with irreducible $Spin(n)$-modules and the associated generalized gradients (these include, for example, the Rarita-Schwinger operators). Third, equations of the form \eqref{gs1}-\eqref{gs3} can be considered in which in place of $\ep\phi(\om, \om)$ there appears a sum of terms of this same form but for tensors of various symmetry types. 

Apart from consistency considerations there are also solvability considerations. For a general metric $h$, there may be no $\om$ solving the equations $\A_{\si}(\om) = 0, \si \in \wtse$. For differential forms, with $\A$ being $d$ or $d^{\ast}$, the exterior differential and its adjoint, the space of solutions is described by Hodge theory. Although for other symmetries there is no comparably powerful general theory yielding existence there are general vanishing theorems that describe obstructions to solvability. Such results at least suggest where to look for nontrivial existence results.

When the metric of a solution is Riemannian signature, strong results on the behavior of nontrivial solutions will be available when the sum of the Laplacians $\A_{\mu}^{\ast}\A_{\mu}$ as $\A_{\mu}$ runs over the generalized gradients \emph{not} occurring in $\wtse$ is elliptic. In this case qualitative properties of solutions can be deduced via the Bochner/Weitzenböck/Kato machinery that goes back to Calabi and Yau in particular. In Lorentzian signature the same calculations are generally valid, but the resulting partial differential equations are hyperbolic and their behavior and solvability has an entirely different character that, although worth exploring, escapes the purview of the current paper.

The Bochner Laplacian $D^{\ast}D$ on $\Ga(\W(\rho))$ is a sum of Laplacian operators of the form $\A^{\ast}{\si}\A_{\si}$ as $\si$ runs over all the set $\wts(\rho)$ associated with $(\rho, \stw)$. Which sums of operators of the form $\A^{\ast}_{\si}\A_{\si}$ are elliptic is described by Branson in \cite[Theorem $1.4$]{Branson-steinweiss}.

In the cases where this general hierarchy described above has been seen to be interesting, the complementary operator $\sum_{\si \in \wts(\rho)\setminus \wtse} = D^{\ast}D - \oplus_{\si\in \wtse}\A^{\ast}_{\si}\A_{\si}$ is elliptic. This means that a solution of \eqref{gs1} satisfies an elliptic second order PDE with zeroth order terms quadratic in the curvature tensor of the metric. This is exactly a Weitzenböck formula in the sense of Semmelmann-Weingart \cite{Semmelmann-Weingart}, where such Weitzenböck formulas are studied in the generality necessary for this general scheme. The equations \eqref{gs1} means that the zeroth order terms quadratic in the curvature tensor are reexpressible in terms of $\phi$ as an expression quartic in $\om$. Depending on the sign $\ep$ and the value of $\cosmo$, such a formula can either be integrated, yielding Simons style integral estimates, or it can be bounded using refined Kato inequalities in the sense of \cite{Branson-kato, Calderbank-Gauduchon-Herzlich} to yield a differential inequality that can be used to obtain bounds on the growth of $\om$ in the style of Calabi-Cheng-Yau.

A special feature of the exterior differential on differential forms is its diffeomorphism equivariance. This feature fails for other choices of generalized gradients.

For the general scheme, the coupled projective flatness equations formally resemble the usual Hitchin equations for Higgs fields. The conditions $\A_{\si}(\om) = 0, \si \in \wtse$ are something like holomorphicity of the auxiliary field, while the equation $\riem - \ep \phi(\om, \om) = -\tfrac{\ka}{n(n-1)}h\kwedge h$ says that a quadratic modification of the curvature determined by the auxiliary field is a multiple of the metric (the identity). In the special case of trace-free symmetric tensors on surfaces, that the symmetric $k$-tensor $\om$ be trace-free and Codazzi means it is the real part of a holomorphic $k$-differential \cite[Lemma $3.5$]{Fox-2dahs}, and the coupled projective flatness equations (which collapse to the coupled Einstein equations on a surface) are in fact a special case of the abelian Higgs equations, as is explained (using somewhat different terminology) in \cite[Section $8$]{Fox-2dahs}.

\section{Completely symmetric tensors}
This section describes the particularization of the general scheme of Section \ref{generalschemesection} for trace-free symmetric tensors. The simplest instantiations of the general scheme occur when the number of generalized gradients is minimal, namely in the cases corresponding to antisymmetric tensor and symmetric tensors (Young diagrams with one column or row). The first case leads to the generalized Einstein-Maxwell hierarchy for $p$-forms already described. The second case leads to hierarchies for conformal Killing and trace-free Codazzi tensors. This is presented in Section \ref{couplingsection} after some background material is described in Sections \ref{algebrasection} and \ref{differentialoperatorssection}. 

\subsection{Algebra of symmetric tensors and curvature operators}\label{algebrasection}
Fix a \emph{metrized vector space} $(\ste, h)$. The graded vector space $S(\sted) = \oplus_{k \geq 0}S^{k}(\sted)$ of finite linear combinations of completely symmetric covariant tensors on $\ste$ is an algebra graded by degree with the product given by the symmetrized tensor product $(\al \sprod \be)_{i_{1}\dots i_{k+l}} = \al_{(i_{1}\dots i_{k}}\be_{i_{k+1}\dots i_{k+l})}$ for $\al \in S^{k}(\sted)$ and $\be \in S^{l}(\sted)$. Using $h$, $S(\ste)$ and $S(\std)$ are identified by index raising and lowering, where for $X \in \Ga(S^{k}\ste)$, $X^{\flat}_{i_{1}\dots i_{k}} = X^{j_{1}\dots j_{k}}h_{i_{1}j_{1}}\dots h_{i_{k}j_{k}} \in \Ga(S^{k}\std)$, and, for $\om \in \Ga(S^{k}\std)$, $\om^{\sharp} \in \Ga(S^{k}\ste)$ is defined dually.

Let $\imt$ denote the bilinear pairing $S(\sted)\times S(\sted) \to S(\sted)$ defined, for $\al \in S^{k}\std$ and $\be \in S^{l}\std$ with $k \leq l$, by 
\begin{align}\label{imtdefined}
\imt(\al)\be = \al_{i_{1}\dots i_{k}}\be^{i_{1}\dots i_{k}}\,_{i_{k+1}\dots i_{l}}. 
\end{align}
For $X \in \std$ the degree $-1$ and $1$ operators $\imt(X)$ and $\simt(X) \in \eno(S(\sted))$ defined by $\simt(X)\om = (k+1)X\sprod \om$ for $\om \in S^{k}\std$ satisfy the relation $[\imt(X), \simt(Y)] = h(X, Y)$. Graded linear operators $\tr, \met \in \eno(S(\sted))$ of degrees $-2$ and $2$ are defined by 
\begin{align}\label{trmetdefined}
&\tr(\om) =  \imt(h)\om,& 
& \met(\om) = h \sprod \om , && \om \in S^{k}(\sted).
\end{align}
By convention, $\tr(\om) = 0$ when $k = 1$. These operators are adjoints, meaning that $\lb \al, \met(\be) \ra = \lb \tr(\al), \be\ra$ for $\al \in S^{k}(\ste)$ and $\be \in S^{k-2}(\ste)$. In particular, the orthogonal complement in $S(\sted)$ of the image of $\met$ is the subspace $S_{0}(\sted) =\oplus_{k \geq 0}S^{k}_{0}\std$ of trace-free elements of $S(\sted)$, where, by convention, $S^{1}_{0}\std = \std$. 

The degree $2$, $-2$, and $0$ graded linear operators $\sraise, \slower, \sdeg \in \eno(S(\sted))$ defined by
\begin{align}\label{sl2ops}
\sraise(\om) = \tbinom{k+2}{2}h\sprod \om, & &\slower(\om) = -\tfrac{1}{2}\tr \om, & &\sdeg(\om) = \tfrac{n+2k}{2}\om, && \om \in S^{k}(\sted).
\end{align}
satisfy
\begin{align}
&[\sraise, \slower] = \sdeg, &&[\sdeg, \sraise] = 2\sraise, & &[\sdeg, \slower] = -2\slower,
\end{align}
so generate an action of $\mathfrak{sl}(2, \rea)$ on $S(\sted)$. 
The operators $\imt$ and $\simt$ satisfy the relations
\begin{align}
\label{imtsimt}&[\sraise, \imt(X)] = -2\simt(X),& &[\slower, \simt(X)] = -\imt(X),& &[\slower, \imt(X)] = 0,& &[\sraise, \simt(X)] = 0.&
\end{align}

\begin{lemma}
Let $(\ste, h)$ be a Euclidean vector space of dimension $n \geq 4$. Suppose $k \geq 3$. For $\al \in S^{k-2}\std$ and $\be \in S^{k}\std$,
\begin{align}\label{slkw}
\begin{aligned}
\sraise(\al)\kwedge \be + (k-2)(k-3)\al \kwedge \slower(\be) &= h \kwedge \imt(\al)\be,\\
\rictr(\sraise(\al)\kwedge \be) + (k-2)(k-3)\rictr(\al \kwedge \slower(\be)) &= \tfrac{2 - n}{2}\imt(\al)\be + (n-2) \lb\al, \slower(\be)\ra h,\\
\scal(\sraise(\al)\kwedge \be) + (k-2)(k-3)\scal(\al \kwedge \slower(\be)) &=2(n-1) \lb\al, \slower(\be)\ra.
\end{aligned}
\end{align}
(The term $\al \kwedge \slower(\be)$ is zero if $k = 3$.)
\end{lemma}

\begin{proof}
A straightforward computation shows
\begin{align}\label{kw1}
2\tbinom{k}{2}(h\sprod \al)_{kiA} \be_{jl}\,^{A}=\tbinom{k-2}{2}\al_{kiB}\tr(\be)_{jl}\,^{B} + (k-2)(\al_{kB}\be_{ijl}\,^{B} + \al_{iB}\be_{jkl}\,^{B}) + h_{ki}\al_{B}\be_{jl}\,^{B},
\end{align}
in which the uppercase Latin letters $A$ and $B$ denote the contracted multiindices. Antisymmetrizing \eqref{kw1} in $ij$ and in $kl$ yields
\begin{align}\label{kw2}
\tbinom{k}{2}(h\sprod \al)\kwedge \be = \tbinom{k-2}{2}\al \kwedge \tr(\be) + h \kwedge \imt(\al)\be.
\end{align}
Rewriting \eqref{kw2} yields the first equation of \eqref{slkw}. For $\si \in S^{2}(\std)$ there hold 
\begin{align}\label{rictrsih}
&\rictr(\si \kwedge h) = \tfrac{2 - n}{2}\left(\si + \tfrac{1}{n-2}\tr(\si)h\right), &&\scal(\si \kwedge h) = (1-n)\tr \si.
\end{align}
Applied to the first equation of \eqref{slkw} with $\si = \imt(\al)\be$, the identities \eqref{rictrsih} yield the remaining equations of \eqref{slkw}.
\end{proof}

\begin{corollary}
Suppose $k \geq 3$. For $\al \in S^{k-2}\std$ and $\be \in S^{k}_{0}\std$,
\begin{align}\label{tfkw}
\tf(\sraise(\al)\kwedge \be) = \sraise(\al)\kwedge \be - h \kwedge \imt(\al)\be.
\end{align}
\end{corollary}
\begin{proof}
From \eqref{rictrsih} it follows that the trace-free part $\tf(\sY)$ of $\sY \in \mcurv(\std)$ is given by 
\begin{align}\label{tfweyl}
\begin{aligned}
\tf(\sY)&= \sY + \tfrac{2}{n-2}\rictr(\sY)\kwedge h - \tfrac{1}{(n-2)(n-1)}\scal(\sY)h\kwedge h = \sY + \tfrac{2}{n-2}\mr{\rictr(\sY)}\kwedge h + \tfrac{1}{n(n-1)}\scal(\sY)h\kwedge h.
\end{aligned}
\end{align}

Because $\slower(\be) = 0$, combining \eqref{tfweyl} and \eqref{slkw} yields \eqref{tfkw}.
\end{proof}

\begin{lemma}\label{tfkwedgenontriviallemma}
Let $(\ste, h)$ be a Euclidean vector space of dimension $n$. 
\begin{enumerate}
\item\label{tfk1} If $n \geq 2$, for all $k \geq 2$ there are $\al, \be \in S^{k}_{0}\std$ such that $\tf\rictr(\al \kwedge \be) \in S^{2}_{0}\std$ is nonzero.
\item\label{tfk2} If $n \geq 4$, for all $k \geq 2$ there are $\al, \be \in S^{k}_{0}\std$ such that $\tf(\al \kwedge \be) \in \mcurv_{0}(\std) = \mcurv(\std)\cap \ker \rictr$ is nonzero.
\end{enumerate}
\end{lemma}
\begin{proof}
Choose nonzero $X \in \std$ and nonzero $\si \in S^{k-1}_{0}\std$ such that $\imt(X)\si = 0$ (for example, $\si$ can be taken as the extension to $\std$ of a trace-free symmetric $(k-1)$-tensor on the orthocomplement of $X$). By \eqref{imtsimt}, $\simt(X)\si \in S^{k}_{0}\std$. Because $\imt(X)\si = 0$, there holds $\rictr((\simt(X)\si)\kwedge(\simt(X)\si)) = |\si|^{2}X \tensor X$, which has nonzero trace-free part. This shows \eqref{tfk1}.

For $X, Y, Z, W \in \std$ there hold
\begin{align}\label{xyzw}
\begin{aligned}
8&(X\sprod Y)\kwedge(Z \sprod W) \\
&= (Y\wedge Z) \tensor (X\wedge W) + (X \wedge W)\tensor (Y \wedge Z) +  (Y\wedge W) \tensor (X\wedge Z) + (X \wedge Z)\tensor (Y \wedge W),\\
4&\rictr((X\sprod Y)\kwedge(Z \sprod W) )  = - 2h(X, Y)Z\sprod W - 2h(Z, W)X\sprod Y\\
&\quad h(X, Z)Y\sprod W + h(X, W)Y\sprod Z + h(Y, Z)X\sprod W + h(Y, W)X\sprod Z.
\end{aligned}
\end{align}
Supose $\{X, Y, Z, W\}$ is an orthonormal set (this requires $n \geq 4$). By \eqref{xyzw}, $(X\sprod Y)\kwedge(Z \sprod W)$ is nonzero and is contained in $\mcurv_{0}(\std)$. If $k = 2$ there is nothing more to show, so suppose $k > 2$. Because $n \geq 4$ there is nonzero $\si \in S^{k}_{0}\std$ such that $\imt(X)\si = 0 = \imt(Y)\si$ (extend any trace-free symmetric tensor on the orthocomplement of the span of $X$ and $Y$). Because $\imt(X)\si = 0 = \imt(Y)\si$, 
\begin{align}
\tbinom{k+2}{2}^{2}(X\sprod Y\sprod \si)\kwedge (Z\sprod W \sprod \si)= |\si|^{2}(X\sprod Y)\kwedge(Z \sprod W)
\end{align}
is a nonzero element of $\mcurv_{0}(\std)$. However, $Z\sprod W \sprod \si$ need not be trace-free.
The tensors $\al = \tbinom{k+2}{2}\tf(Z\sprod W \sprod \si)$ and $\be = \tbinom{k+2}{2} X\sprod Y\sprod \si$ are in $S^{k}_{0}\std$. There is $\mu \in S^{k-2}\std$ such that $Z\sprod W \sprod \si = \al + \sraise(\mu)$. 
By \eqref{tfkw},
\begin{align}
\begin{aligned}
|\si|^{2}(X\sprod Y)\kwedge(Z \sprod W)& = \tf \left(|\si|^{2}(X\sprod Y)\kwedge(Z \sprod W) \right)  = \tf\left((\al + \sraise(\mu))\kwedge \be \right)\\
&= \tf(\al \kwedge \be)+ \tf\left(h \kwedge \imt(\mu)\be\right)= \tf(\al \kwedge \be),
\end{aligned}
\end{align}
the last equality by \eqref{tfweyl}. This shows that $\tf(\al \kwedge \be)$ is nonzero, so shows \eqref{tfk2}.
\end{proof}

\begin{lemma}\label{kwedgelemma}
Let $(\ste, h)$ be a Euclidean vector space of dimension $n \geq 4$. Let $k \geq 2$. For any $a, b, c\in \rea$ not all zero, the $O(n)$-equivariant bilinear map $\phi:\symkt \times \symkt \to \mcurv(\std)$ defined by
\begin{align}\label{abcmap}
\phi(\al, \be) = a\tf(\al \kwedge \be) + b (\tf \rictr(\al \kwedge \be) )\kwedge h+ c h(\al, \be) h \kwedge h,
\end{align}
is nontrivial and every nontrivial such map has this form for some $a, b, c \in \rea$ not all zero.
\end{lemma}
\begin{proof}
By \cite[Lemma $4.3$]{Bettiol-Mendes} the $O(n)$-module $S^{2}(\symkt)$ has exactly one irreducible factor isomorphic to each of $\mcurv_{0}(\std)$, $S^{2}_{0}(\std)$, and $\rea$. Since a map of the form \eqref{abcmap} is $O(n)$-equivariant, it suffices to prove that such a map is nontrivial provided at least one of $a$, $b$, and $c$ is nonzero. For nonzero $a$ or $b$ this follows from Lemma \ref{tfkwedgenontriviallemma}, while for nonzero $c$ it is simply the nondegeneracy of $h$.
\end{proof}

Lemma \ref{kwedgelemma} justifies that for symmetric tensors there is a nontrivial coupling $\phi$ as in \eqref{gs1}. Lemma \ref{consistencylemma} shows that there is a choice of such a coupling that is automatically consistent.

\subsection{Generalized gradients for symmetric tensors: trace-free Codazzi and conformal Killing tensors}\label{differentialoperatorssection}
This section describes the generalized gradients for trace-free completely symmetric tensors. Most of the material recounted in this section was presented in \cite[section $6$]{Fox-ahs}. Much of this material has been obtained before or since by others in various equivalent forms; see in particular \cite{Hadfield, Heil-Moroianu-Semmelmann, Shandra-Stepanov-Mikes, Stepanov}. 

For $\symkt$ there are three generalized gradients, $\clie$, $\tlie$, and $\div$ as indicated schematically in \eqref{gengrad} for the case $k = 4$.
\begin{align}\label{gengrad}
\begin{aligned}
\underset{D\om}{\underbrace{\ydiagram{1}\tensor\ydiagram{4}}} = \underset{\text{$\clie(\om)$}}{\underbrace{\ydiagram{5}}} + \underset{\text{$\tlie(\om)$}}{\underbrace{\ydiagram{4, 1}}} + \underset{\text{$\div(\om)$}}{\underbrace{\ydiagram{3}}}
\end{aligned}
\end{align}
These generalized gradients are defined in what follows.

Because the fibers of $TM$ and $\ctm$ carry canonically dual flat centroaffine structures, constructions, such as those of Sectin \ref{algebrasection}, applicable to dual vector spaces $\ste$ and $\std$ apply fiberwise to $TM$ and $\ctm$ without change. 
The graded vector space $S(TM)= \oplus_{k\geq 0}\Ga(S^{k}(TM))$ (respectively $S_{0}(TM)= \oplus_{k\geq 0}\Ga(S^{k}_{0}(TM)))$ comprising finite linear combinations of (trace-free) completely symmetric tensors is a graded Poisson algebra with the fiberwise symmetrized tensor product $\sprod$ and the Poisson bracket defined by 
\begin{align}\label{schoutendefined}
\{X, Y\}^{i_{1}\dots i_{k+l-1}} = k X^{p(i_{1}\dots i_{k-1}}\nabla_{p}Y^{i_{k}\dots i_{k+l-1})} - l Y^{p(i_{1}\dots i_{l-1}}\nabla_{p}X^{i_{l}\dots i_{k+l-1})},
\end{align}
for $X \in \Ga(S^{k}(TM))$ and $Y \in \Ga(S^{l}(TM))$ and any torsion-free affine connection $\nabla$. Since index raising and lowering using $h$ induce commutative algebra isomorphisms $\flat: S(TM) \to S(\ctm)$ and $\sharp:S(\ctm) \to S(TM)$, there results on $S(\ctm)$ the Poisson bracket $\{\al, \be\}$ defined by $\{\al, \be\} = \{\al^{\sharp}, \be^{\sharp}\}^{\flat}$. 

Given a metric $h$, the connection in \eqref{schoutendefined} may be taken to be its Levi-Civita connection $D$ and a degree one operator $\symd: S(\ctm) \to S(\ctm)$ is defined as a multiple of the symmetrized covariant derivative by
\begin{align}
&\symd(\om) _{i_{1}\dots i_{k+1}}= \tfrac{k+1}{2}\{h, \om\}_{i_{1}\dots i_{k+1}} = (k+1)D_{(i_{1}}\om_{i_{2}\dots i_{k+1})}, &&\om \in \Ga(S^{k}(\ctm)).
\end{align}
The formal adjoint of $\om \to -\tfrac{1}{2}\{h, \om\}$ is the divergence operator $\div:S(\ctm) \to S(\ctm)$ defined by \eqref{divdefined}.
Straightforward computations show $\symd$ and $\div$ span a two-dimensional irreducible $\mathfrak{sl}(2, \rea)$-module, meaning
\begin{align}
&[\sraise, \symd] = 0,&&[\slower, \symd] = -\div,&&[\sraise, \div] = -\symd, &&[\slower, \div] = 0,& &[\sdeg, \symd] = \symd,& &[\sdeg, \div] = -\div.
\end{align}
A \emph{rank $k$ Killing tensor} is a symmetric tensor $X \in \Ga(S^{k}(TM)))$ such that $\{h, X^{\flat}\} = 0$; equivalently, $\symd(X) = 0$.

Recall that $\symkt \subset \symk$ is the subbundle of trace-free tensors. Let $\tf:\Ga(\symk) \to \Ga(\symkt)$ be the $h$-orthogonal projection onto the completely trace-free part and define the degree one \emph{conformal Killing} operator $\clie: S_{0}(\ctm) \to S_{0}(\ctm)$ by 
\begin{align}
\label{cliedefined}
&\begin{aligned}
\clie(\om) &= \tfrac{1}{2}\tf \{h, \om\}= \tfrac{1}{2}\{h, \om\} - \tfrac{k}{n + 2(k-1)} h\sprod \div(\om) = \tfrac{1}{k+1}\left(\symd - \tfrac{2}{n+2(k-1)}\sraise \div\right)(\om), \end{aligned}.
\end{align}
More explicitly, the conformal Killing operator $\clie$ is given by
\begin{align}\label{cliedefined0}
&\clie(\om)_{i_{1}\dots i_{k+1}}  = D_{(i_{1}}\om_{i_{2}\dots i_{k+1})} - \tfrac{k}{n + 2(k-1)} h_{(i_{1}i_{2}}\div(\om)_{i_{3}\dots i_{k+1})}.
\end{align}
If $X \in \Ga(TM)$, then $\clie(X^{\flat})$ is the Lie derivative of the conformal structure $[h]$ along $X$, for
\begin{align}\label{cliex}
2\clie(X^{\flat})_{ij} = 2\mr{DX^{\flat}} = 2D_{(i}X_{j)} - \tfrac{2}{n}D_{p}X^{p} h_{ij} =\tf(\lie_{X}h)_{ij} = (\lie_{X}[h])_{ij}.
\end{align}
The identity \eqref{cliex} motivates use of the notation, $\clie$, resembling that for the Lie derivative. 

The \emph{trace-free Codazzi operator} $\klie$ on $\Ga(\symk)$ is defined to be the trace-free part of the Codazzi operator $\cod(\om)$ defined in \eqref{codazzidefined}. For trace-free $\om \in \Ga(\symkt)$,
\begin{align}\label{kliedefined}
\begin{aligned}
\klie&(\om)_{ij i_{1}\dots i_{k-1}} = \cod(\om)_{iji_{1}\dots i_{k-1}} - \tfrac{1}{n+k-3}\sum_{s = 1}^{k-1}h_{i_{s}[i}\div(\om)_{j]i_{1}\dots \hat{i}_{s}\dots i_{k-1}}.\\
\end{aligned}
\end{align}
By definition, 
\begin{align}
&\klie(\om)_{iji_{1}\dots i_{k-1}} = \klie(\om)_{[ij]i_{1}\dots i_{k-1}}, && 
&\klie(\om)_{iji_{1}\dots i_{k-1}} = \klie(\om)_{ij(i_{1}\dots i_{k-1})}, && \klie(\om)_{[ijk]i_{1}\dots i_{k-2}} = 0. 
\end{align}
Note that, for $\om \in \Ga(\ctm) = \Ga(S^{1}_{0}(\ctm))$, $\klie$ is half the exterior differential, $\klie(\om)_{ij} = D_{[i}\om_{j]} = \tfrac{1}{2}d\om_{ij}$.

A \emph{rank $k$ Codazzi tensor} is a completely symmetric tensor $\om \in \Ga(S^{k}(\ctm)) \cap \ker \cod$.  Equivalently, $D\om$ is also completely symmetric. The divergence of a trace-free Codazzi tensor vanishes, for $D^{p}\om_{pi_{1}\dots i_{k-1}} = D_{i_{1}}\om^{p}\,_{pi_{2}\dots i_{k-1}} = 0$, so the space of rank $k$ trace-free Codazzi tensors is $\Ga(\symkt)\cap \ker \klie \cap \ker \div = \Ga(\symkt)\cap \cod$. Properties of trace-free Codazzi tensors have been studied in \cite{Shandra-Stepanov-Mikes, Stepanov, Stepanov-Tsyganok}.

A \emph{rank $k$ conformal Codazzi tensor} is a tensor $\om \in \Ga(S^{k}(\ctm))$ such that $\klie(\tf \om) = 0$. By definition a symmetric tensor is conformal Codazzi if and only if its trace-free part is conformal Codazzi. Trace-free Codazzi tensors and divergence-free conformal Codazzi tensors are the same thing.

\begin{example}
Let $h$ be a metric on a manifold of dimension $n \geq 3$ and suppose $\sY \in \mcurv(\ctm)$ satisfyies the differential Bianchi identity. Define $\si(\sY) = \rictr(\sY) - \tfrac{1}{2(n-1)}\scal(\sY)h$, so that $\si(\riem)$ is the Schouten tensor of $h$. Straightforward computations using \eqref{tfweyl} and Lemma \ref{divergencefreecurvaturelemma} show
\begin{align}
&\div \tf(\rictr(\sY))  = \tfrac{n-2}{2n}d\scal(\sY),&&
\div \tf(\sY)  =  \tfrac{2(3-n)}{n-2}\cod(\si(\sY)) =  \tfrac{2(3-n)}{n-2}\klie(\tf\rictr(\sY)).
\end{align}
It follows that if $n > 3$, then the following are equivalent: $\rictr(\sY)$ is conformal Codazzi; $\si(\sY)$ is Codazzi; and $\sY$ is divergence free. In particular, if $h$ is conformally flat and $n \geq 3$, then the Schouten tensor $\si(\riem)$ is a Codazzi tensor, and if $n \geq 4$, a metric $h$ has harmonic Weyl tensor (meaning $\div \tf\riem = 0$) if and only if its Ricci curvature is conformal Codazzi.
\end{example}

For $\om \in \Ga(\symkt)$ define 
\begin{align}\label{tliedefined}
\tlie(\om)_{ii_{1}\dots i_{k}} = \tfrac{2k}{k+1}\klie(\om)_{i(i_{1}\dots i_{k})},
\end{align}
which is completely trace-free and satisfies $\tlie(\om)_{i(i_{1}\dots i_{k})} = \tlie(\om)_{ii_{1}\dots i_{k}}$ and $\tlie(\om)_{(i_{1}\dots i_{k+1})} = 0$. Using 
\begin{align}
\klie(\om)_{[iji_{1}]i_{2}\dots i_{k-1}} = 0
\end{align}
it can be checked that $\tlie(\om)_{[ij]i_{1}\dots i_{k-1}} = \klie(\om)_{iji_{1}\dots i_{k-1}}$, so that each of $\klie$ and $\tlie$ determines the other on $\symkt$. The preceding implies that the kernels of $\klie$ and $\tlie$ on $\Ga(\symkt)$ are the same. 

The linear map $\ih:\Ga(\symkmt)\to \Ga(\ctm \tensor \symkt)$ defined by
\begin{align}
\ih(\om)_{ii_{1}\dots i_{k}} = \tfrac{k(n+2(k-2))}{(n+k-3)(n+2(k-1))}h_{i(i_{1}}\om_{i_{2}\dots i_{k})} + \tfrac{k(1-k)}{(n+k-3)(n+2(k-1))}h_{(i_{1}i_{2}}\om_{i_{3}\dots i_{k})i},
\end{align}
(for $f \in \cinf(M) = \Ga(S^{0}_{0}(\ctm))$, $\ih(f)_{ij} = \tfrac{1}{n}fh_{ij}$)
is characterized by the properties that its image is contained in $\Ga(\ctm \tensor \symkt)$ and that the nontrivial traces of $\ih(\om)$ equal $\om$. In particular, $\ih$ is injective.  

Recall the definition \eqref{tliedefined} of the operator $\tlie$.
The identity \eqref{normdom} of Lemma \ref{domdecompositionlemma} is a special case of \cite[Lemma $2.24$]{Bourguignon-Hijazi-Milhorat-Moroianu-Moroianu}. 

\begin{lemma}\label{domdecompositionlemma}
 For $\om_{i_{1}\dots i_{k}} \in \Ga(\symkt)$ there hold
\begin{align}\label{domkl}
D\om & = \clie(\om) + \tlie(\om) + \ih(\div(\om)),\\
\label{normdom}
|D\om|^{2} & - |\clie(\om)|^{2} - \tfrac{k(n+2(k-2))}{(n+k-3)(n+2(k-1))}|\div(\om)|^{2} =  |\tlie(\om)|^{2}  =  \tfrac{2k}{k+1}|\klie(\om)|^{2}.
\end{align}
\end{lemma}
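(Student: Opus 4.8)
The plan is to realize \eqref{domkl} as the fibrewise orthogonal decomposition of $D\om$ under the structure group of $(M,h)$, and then to read off \eqref{normdom} by the Pythagorean theorem. First I would observe that, since $\om$ is symmetric and trace-free, $D\om$ is a section of $\ctm \tensor \symkt$, and that this bundle splits into three mutually orthogonal invariant subbundles: the totally symmetric trace-free part $\symkpt$, a copy of the trace-free hook module $\precod^{k+1}(\ctm)$ (realized with symmetric last $k$ indices, as $\tlie$ is), and a copy of $\symkmt$ embedded via the pure-trace map $\ih$. For $n$ large relative to $k$ these three isotypic components each occur with multiplicity one and are therefore pairwise orthogonal; the borderline small-$n$ cases can be checked separately, since all the constants below remain finite. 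The content of \eqref{domkl} is precisely that $\clie(\om)$, $\tlie(\om)$, and $\ih(\div(\om))$ are the three orthogonal projections of $D\om$ onto these summands.

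Next I would identify each projection. By its construction in \eqref{cliedefined}, $\clie(\om)$ is totally symmetric and trace-free and equals the totally symmetrized covariant derivative of $\om$ with its trace removed, so it is the $\symkpt$-component of $D\om$. The map $\ih$ is characterized by the property that its image lies in $\ctm \tensor \symkt$ and that its nontrivial traces return its argument; since the only nonzero trace of $D\om$ is $\div(\om)$ (here I use crucially that $\om$ itself is trace-free), the pure-trace component of $D\om$ is exactly $\ih(\div(\om))$. It then remains to check that the difference $D\om - \clie(\om) - \ih(\div(\om))$ is totally trace-free with the hook symmetry and coincides with $\tlie(\om)$; this is verified by confirming that its antisymmetrization over the first two indices reproduces $\klie(\om)$ as defined in \eqref{kliedefined}, using $\tlie(\om)_{[ij]i_1\dots i_{k-1}} = \klie(\om)_{iji_1\dots i_{k-1}}$ together with the characterization of $\tlie$ as the symmetrization $\tlie(\om)_{ii_1\dots i_k} = \tfrac{2k}{k+1}\klie(\om)_{i(i_1\dots i_k)}$.

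With the orthogonal decomposition \eqref{domkl} in hand, the first line of \eqref{normdom} is immediate from $|D\om|^2 = |\clie(\om)|^2 + |\tlie(\om)|^2 + |\ih(\div(\om))|^2$, once one computes the coefficient $|\ih(\div(\om))|^2 = \tfrac{k(n+2(k-2))}{(n+k-3)(n+2(k-1))}|\div(\om)|^2$. I would obtain this most cleanly not by expanding the explicit formula for $\ih$, but by noting that $\ih(\div(\om))$ is the trace-projection of $D\om$, so $|\ih(\div(\om))|^2 = \lb \ih(\div(\om)), D\om \ra$, and that contracting $\ih(\div(\om))$ against $D\om$ amounts, by the defining trace property of $\ih$ and the adjointness of $\div$ with $-D$, to pairing $\div(\om)$ with itself times the indicated rational constant, which is then forced by the coefficients in the definition of $\ih$. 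The second line of \eqref{normdom} follows by substituting the norm relation $|\tlie(\om)|^2 = \tfrac{2k}{k+1}|\klie(\om)|^2$, which I would prove by contracting $\tlie(\om)$ with itself, using $\tlie(\om)_{(ii_1\dots i_k)} = 0$ to trade the symmetric realization for the antisymmetric one, and then invoking the hook identity $\klie(\om)_{[iji_1]\dots i_{k-1}} = 0$.

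The representation-theoretic skeleton---three mutually orthogonal components, with the three operators as the corresponding projections---is the conceptually clean part and makes orthogonality automatic. The main obstacle is bookkeeping: pinning down the exact rational constants, in particular verifying $|\ih(\div(\om))|^2$ and the ratio $|\tlie(\om)|^2 = \tfrac{2k}{k+1}|\klie(\om)|^2$, which require careful index manipulation with the symmetrization conventions and repeated use of the trace-free condition on $\om$. Since \eqref{normdom} is quoted as a special case of \cite[Lemma $2.24$]{Bourguignon-Hijazi-Milhorat-Moroianu-Moroianu}, one could alternatively defer these constant computations to that reference.
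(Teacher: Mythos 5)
Your proposal is correct, and its skeleton---the splitting of $\ctm \tensor \symkt$ into the totally symmetric trace-free part, the hook part, and the image of $\ih$---is the same one underlying the paper's argument, but you reach it by a different route. The paper's proof is computational: it substitutes the definitions of $\clie$ and $\klie$ into the pointwise identity $D_{i}\om_{i_{1}\dots i_{k}} = D_{(i}\om_{i_{1}\dots i_{k})} + \tfrac{2}{k+1}\sum_{s=1}^{k}D_{[i}\om_{i_{s}]i_{1}\dots \hat{i}_{s}\dots i_{k}}$ and simplifies the trace terms (its stated alternative verification---the two sides agree modulo pure trace, and their traces agree by the characterization of $\ih$---is essentially your identification of the components); it then obtains \eqref{normdom} in one stroke by contracting \eqref{domkl} with $D^{i}\om^{i_{1}\dots i_{k}}$ and evaluating the pairings $\lb D\om, \clie(\om)\ra$, $\lb D\om, \tlie(\om)\ra$, $\lb D\om, \ih(\div(\om))\ra$ directly, rather than by invoking the Pythagorean theorem and computing $|\tlie(\om)|^{2}$ and $|\ih(\div(\om))|^{2}$ separately as you do. What your approach buys is that orthogonality is structural rather than checked; what it costs is the appeal to multiplicity-one and Schur's lemma, which is more machinery than is needed and is what forces your caveat about borderline small $n$. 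That caveat can be dropped entirely: the three summands are mutually orthogonal for elementary reasons valid for all $n$---anything in the image of $\ih$ carries an explicit metric factor and so pairs to zero with any completely trace-free tensor, and a totally symmetric tensor pairs to zero with any tensor whose total symmetrization vanishes---so no irreducibility or non-isomorphism of the modules is required.

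Two smaller points. First, in your computation of $|\ih(\div(\om))|^{2}$ the appeal to the formal adjointness of $\div$ and $-D$ is out of place: that is a global, integration-by-parts statement, whereas what is needed is purely pointwise---pairing $\ih(\div(\om))$ against $D\om$ produces traces of $D\om$, of which only $\div(\om)$ survives because $\om$ is trace-free and $\ih(\div(\om))$ is trace-free in its last $k$ indices; this yields the coefficient $\tfrac{k(n+2(k-2))}{(n+k-3)(n+2(k-1))}$ directly from the first coefficient in the definition of $\ih$. Second, the injectivity of antisymmetrization on the hook summand, which you use to identify the remainder with $\tlie(\om)$, is exactly the content of the pair of identities $\tlie(\om)_{i(i_{1}\dots i_{k})} = \tlie(\om)_{ii_{1}\dots i_{k}}$ and $\tlie(\om)_{[ij]i_{1}\dots i_{k-1}} = \klie(\om)_{iji_{1}\dots i_{k-1}}$ recorded just before the lemma, so it is available without further work; what then remains is only the finite check that the antisymmetrization of $\ih(\div(\om))$ over its first two indices reproduces the trace correction in \eqref{kliedefined}, which is the bookkeeping you acknowledge.
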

\begin{proof}
Substituting the definitions of $\clie$, $\tlie$, and $\klie$ into $D_{i}\om_{i_{1}\dots i_{k}} = D_{(i}\om_{i_{1}\dots i_{k})} + \tfrac{2}{k+1}\sum_{s = 1}^{k}D_{[i}\om_{i_{s}]i_{1}\dots \hat{i}_{s}\dots i_{k}}$ and simplifying the trace terms yields \eqref{domkl}.  Pairing \eqref{domkl} with itself yields \eqref{normdom}.
\end{proof}
Symmetrizing \eqref{domkl} recovers \eqref{cliedefined}.
It is immediate from \eqref{normdom} that $\ker D \cap \Ga(S^{k}_{0}(\ctm)) = \ker \clie\cap \ker \klie \cap \ker \div \cap \Ga(S^{k}_{0}(\ctm))$.

The identity \eqref{domkl} of Lemma \ref{domdecompositionlemma} justifies that $\clie$, $\tlie$, and $\div$ are the generalized gradients on $\symkt$. Together with the equivalence of $\klie$ and $\tlie$ as in previous paragraph this justifies regarding $\klie$ as a generalized gradient or, at any rate, using it instead of $\tlie$.

\begin{lemma}
On an $n$-dimensional pseudo-Riemannian manifold $(M, h)$, the operators $\clie$, $\klie$, and $\div$ are conformally invariant in the sense that for $0 < f \in \cinf(M)$ there hold
\begin{align}\label{lieconf}
&\clie_{fh}(f^{k}\om) = f^{k}\clie_{h}(\om), && \klie_{fh}(f^{(k-1)/2}\om) = f^{(k-1)/2}\klie_{h}(\om),&
&f\div_{fh}(f^{(2-n)/2}\om) = f^{(2-n)/2}\div_{h}(\om).
\end{align}
\end{lemma}
\begin{proof}
The Levi-Civita connections $\tD$ and $D$ of conformally related pseudo-Riemannian metrics $fh_{ij}$ and $h_{ij}$ are related by $\tD - D =   2\si_{(i}\delta_{j)}\,^{k} - h_{ij}h^{kp}\si_{p}$ with $2\si_{i} = d\log f_{i}$ and $\si^{i} = h^{ip}\si_{p}$. Write $\sbl_{\clie}(Z)(\phi)$ for the symbol of $\clie$ applied to $Z \in \Ga(TM)$ and $\phi \in \Ga(\symkt)$, and similarly for $\klie$ and $\div$. 
For $\om \in \Ga(\symkt)$, $0 < f \in \cinf(M)$, and $\al \in \rea$, there hold
\begin{align}
\begin{aligned}
&\clie_{fh}(f^{\al}\om) = f^{\al}\left(\clie_{h}(\om) + 2(\al -k)\sbl_{\clie}(\si^{\sharp})(\om)\right),&\\
&\klie_{fh}(f^{\al}\om) = f^{\al}\left(\klie_{h}(\om) + (2\al + 1-k)\sbl_{\klie}(\si^{\sharp})(\om)\right),\\
&f\div_{fh}(f^{\al}\om) = f^{\al}\left(\div_{h}(\om) + (n-2 + 2\al)\imt(\si^{\sharp})\om\right),
\end{aligned}
\end{align}
from which \eqref{lieconf} follows.
\end{proof}
Define $\clie^{\sharp}:\Ga(\symktv) \to \Ga(\symkptv)$ by $\clie^{\sharp}(X) = \clie(X^{\flat})^{\sharp}$. Then \eqref{lieconf} implies the invariance $f\clie^{\sharp}_{\tilde{h}}(X) = \clie^{\sharp}_{h}(X)$, so that while $\lie^{\sharp}$ depends on $h$, the subspace $\ker \clie^{\sharp} \cap \Ga(\symktv)$ does not. 

\begin{lemma}\label{cklemma}
For $X \in \Ga(S^{k}(TM))$, $\clie^{\sharp}(\tf X) = 0$ if and only if $\{h, X^{\flat}\} \in \im \sraise$.
\end{lemma}
\begin{proof}
Write $X^{\flat} = \tf(X^{\flat}) + h \sprod \al = \tf(X)^{\flat} + h \sprod \al$ for $\al \in \Ga(S^{k-2}(\ctm))$. By \eqref{cliedefined},
\begin{align}\label{ckidentity}
\{h, X^{\flat}\} & = \{h, \tf(X)^{\flat}\} + \{h,h \sprod\al\} = 2\clie(\tf(X)^{\flat}) + h \sprod\left( \tfrac{2k}{n+2(k-1)}\div(\tf(X)^{\flat}) + \{h, \al\}\right),
\end{align} 
from which the claimed equivalence is apparent.
\end{proof}

 A \emph{rank $k$ conformal Killing tensor} is a symmetric tensor $X \in \Ga(S^{k}(TM)))$ satisfying the equivalent conditions of Lemma \ref{cklemma}. The definition means that the complete symmetrization of the covariant derivative of $X$ is contained in the Poisson module generated by the bivector dual to the metric. By definition, a trace-free symmetric tensor is conformal Killing if and only if it is in $\ker \clie^{\sharp}$. 
 Via metric duality conformal Killing tensors are identified with elements of $\Ga(\symk)$ whose trace-free part is contained in $\ker \clie$. Lemma \ref{cklemma} recovers the observation from \cite{Heil-Moroianu-Semmelmann} that a symmetric tensor is conformal Killing if and only if its trace-free part is conformal Killing. By \eqref{ckidentity}, a trace-free conformal Killing tensor is a Killing tensor if and only if it is also divergence free. For explicit constructions of Killing tensors of rank higher than two see \cite{Cariglia-Gibbons-vanHolten-Horvathy-Kosinski-Zhang, Gibbons-Houri-Kubiznak-Warnick}.
 
\begin{remark}
For a Riemann surface, conformal Killing and trace-free Codazzi tensors are exactly the real parts of holomorphic sections of powers of the canonical line bundle and for a compact Riemann surface their existence is controlled by the genus via the Riemann-Roch theorem and linked to curvature via the Gauss-Bonnet theorem \cite[section $3$]{Fox-2dahs}. This remark provides one justification for treating such tensors simultaneously. Another is that when the covariant derivative of a trace-free symmetric tensor is decomposed by symmetries, the generalized gradients that appear are the divergence operator and the Codazzi and conformal Killing operators.
\end{remark}

\subsection{The hierarchy of curvature equations for trace-free symmetric tensors}\label{couplingsection}


Lemma \ref{consistencylemma} shows that for a Codazzi tensor $\om$ the curvature tensor $\om \kwedge \om$ automatically satisfies the differential Bianchi identity.
\begin{lemma}\label{consistencylemma}
Let $(M, h)$ be an $n$-dimensional pseudo-Riemannian manifold and suppose $k \geq 2$. For any $\om \in \Ga(S^{k}\ctm)$ such that $\cod(\om) = 0$ there holds $D_{[m}(\om \kwedge \om)_{ij]kl} = 0$.
\end{lemma}
\begin{proof}
A straightforward computation shows that for $\om \in \Ga(S^{k}\ctm)$,
\begin{align}
\begin{aligned}
D_{[m}(\om \kwedge \om)_{ij]kl} &= 2\om_{k[i}\,^{A}D_{m}\om_{j]lA} -2\om_{l[i}\,^{A}D_{m}\om_{j]kA}  = \om_{k[i}\,^{A}\cod(\om)_{mj]lA} -2\om_{l[i}\,^{A}\cod(\om)_{mj]kA} ,
\end{aligned}
\end{align}
where $A$ is shorthand for $a_{1}\dots a_{k-2}$. The claim follows.
\end{proof}

\begin{corollary}\label{uniqueconsistencycorollary}
Let $(M, h)$ be an $n$-dimensional pseudo-Riemannian manifold and suppose $k \geq 2$. The map $\phi: \Ga(\symkt) \times \Ga(\symkt) \to \mcurv(\ctm)$ defined by
\begin{align}\label{symphi}
\phi(\om, \om) = A\om \kwedge \om + B \rictr(\om \kwedge \om) \kwedge h + C \scal(\om \kwedge \om)h \kwedge h
\end{align}
satisfies
\begin{align}\label{consistentsym}
\begin{aligned}
D_{[m}\phi(\om , \om)_{ij]kl} &= A\left( \om_{k[i}\,^{A}\cod(\om)_{mj]lA} -2\om_{l[i}\,^{A}\cod(\om)_{mj]kA}\right) \\
&\quad + B \left(h_{k[i}\cod(\rictr(\om\kwedge \om))_{mj]l} -  h_{l[i}\cod(\rictr(\om\kwedge \om))_{mj]k}\right) + C d_{[m}\scal(\om \kwedge \om)(h\kwedge h)_{ij]kl},
\end{aligned}
\end{align}
for all $\om \in \Ga(\symkt)$. In particular $D_{[m}\phi(\om , \om)_{ij]kl} = 0$ for all $\om \in \ker \cod \cap \Ga(\symkt)$ if $B = C = 0$.
\end{corollary}
\begin{proof}
The identity \eqref{consistentsym} follows from Lemma \ref{consistencylemma} via a straightforward computation. By \eqref{consistentsym} if $B = C = 0$ and $\cod(\om) = 0$, then $D_{[m}\phi(\om , \om)_{ij]kl} = 0$. 
\end{proof}
Together with Lemma \ref{kwedgelemma} and Corollary \ref{uniqueconsistencycorollary} imply that any multiple of the coupling $\phi(\om, \om) = \om \kwedge \om$ has nontrivial trace-free part and is automatically consistent. Although the details are omitted because the claim is not needed, with a bit more work it can be shown that for $\phi$ as in \eqref{symphi}, $D_{[m}\phi(\om , \om)_{ij]kl} = 0$ for all $\om \in \ker \cod \cap \Ga(\symkt)$ if and only if $B = C = 0$, so that up to scale $\phi(\om, \om) = \om \kwedge \om$ is the unique choice of coupling that is automatically consistent.

By Lemmas \ref{divergencefreecurvaturelemma} and \ref{consistencylemma}, if $\om \in \Ga(S^{k}\ctm)$ is Codazzi, then $\ef(\om \kwedge \om)$ is divergence-free. The quadratic map $\stp:\Ga(\symkt) \to \Ga(S^{2}\ctm)$ defined by 
\begin{align}\label{stpdefined}
\begin{aligned}
\stp(\om) &= \begin{cases}
(\om \tensor \om) - \tfrac{1}{2}|\om|^{2}h & k = 1,\\
\ef(\om \kwedge \om) = \rictr(\om \kwedge \om) - \tfrac{1}{2}|\om|^{2}h & k \geq 2,
\end{cases}
\end{aligned}
\end{align}
determines the stress-energy tensor associated with a trace-free Codazzi tensor. This is justified by the identity
\begin{align}
\label{divrictrkbairdintro}
 \begin{aligned} h(X, \div(\stp(\om)))&= -2h(\klie(\om), \sbl_{\klie}(X)(\om)) + \tfrac{n-2}{n+k-3}h(\sbl_{\div}(X)(\om), \div(\om)),
\end{aligned}
\end{align}
having the form \eqref{generalizedbaird} and proved in Theorem \ref{divrictrtheorem} (in \eqref{divrictrkbairdintro}, $\sbl$ denotes the symbol of a differential operator).

When $\om$ vanishes identically, the following hierarchy of equations for a pair $(h, \om)$ and a constant $c \in \rea$ comprising a pseudo-Riemannian metric $h$ and a trace-free symmetric tensor $\om \in \Ga(\symkt)$ specializes to the usual hierarchy of constant sectional curvature (projectively flat), Einstein, and constant scalar curvature:
\begin{flalign}
\tag{TFC1}\label{projectivehiggsintro} 
&\emph{Coupled projective flatness}:& &\klie(\om) = 0,& &\div(\om) = 0,&& \riem - c(\om \kwedge \om) = -\tfrac{\ka}{n(n-1)}h\kwedge h,
\\
\tag{TFC2}\label{stressenergyintro}  
&\emph{Coupled Einstein equations}:& &\klie(\om) = 0,& &\div(\om) = 0,&& \ric - \tfrac{1}{2}\scal h + \tfrac{n-2}{2n}\ka h =c\stp(\om),
\\
\tag{TFC3}\label{constraintintro} 
&\emph{Constraint equations}: & &\klie(\om) = 0,& &\div(\om) = 0,&& D_{i}(\scal - c|\om|^{2}) = 0.&
\end{flalign}

\begin{itemize}[leftmargin=11pt]
\item Because $\om$ is trace-free the pair of equations $\klie(\om) = 0$ and $\div(\om) = 0$ is equivalent to the single equation $\cod(\om) = 0$, but it seems better to write the pair because it conforms with the general scheme and because it makes clearer that each of the two equations can be relaxed individually.

\item Because $\om$ is a trace-free Codazzi tensor, from the differential Bianchi identity it follows that for a solution of \eqref{stressenergyintro} the function $\ka = \scal - c|\om|^{2}$ is constant (provided $n > 2$). Consequently, for a trace-free Codazzi tensor $\om$, \eqref{stressenergyintro} is equivalent to
\begin{align}\tag{TFC2b}\label{stressenergyintrob}
&\klie(\om) = 0,& &\div(\om) = 0,&&&\ric-  c\rictr(\om\kwedge \om) =\tfrac{\ka}{n} h,
\end{align}
from which it is evident that a solution of \eqref{projectivehiggsintro} solves \eqref{stressenergyintro}.

\item In all three systems \eqref{projectivehiggsintro}-\eqref{constraintintro}, the absolute value of $c$ can always be absorbed into $\om$, but its sign cannot.
Both the existence and behavior of solutions depend strongly on the sign of $c$, as well as the signature of $h$, as is illustrated partly by Theorems \ref{scalarcurvaturetheorem} and \ref{simonstheorem}.

\item The special case of \eqref{constraintintro} with $k = 2$ and $c$ positive also includes the special case of the constraint equations of general relativity for a a mean curvature zero spacelike hypersurface in a spacetime solving the Einstein field equations \cite{Carlotto}, and this motivates calling \eqref{constraintintro} the \emph{coupled constraint} equations.	

\item If $c$ has the wrong sign, the solutions of \eqref{stressenergy} may not admit a physical interpretation, as physical considerations focus attention on solutions of the Einstein field equations for which the energy momentum tensor satisfies some energy condition, such as the \emph{weak energy condition} that $x^{i}x^{j}\sT_{ij} \geq 0$ for all timelike vector fields $x^{i}$, where $x^{i}$ is \emph{timelike} if $h(x, x) < 0$. (Such a condition is vacuous if $h$ is Riemannian.) Changing the sign of $c$ destroys such a condition. On the other hand, examples coming from the study of submanifolds show that the equations \eqref{projectivehiggsintro}-\eqref{constraintintro} have mathematical interest even without such an energy condition (for example, see Example \ref{constraintequationsexample}). As was already remarked, similar observations apply to the Einstein-Maxwell hierarchy and its supergravity generalizations.

\item By Lemma \ref{consistencylemma} the generalized gradient equations of \eqref{projectivehiggsintro} imply the consistency of the curvature equation of \eqref{projectivehiggsintro}. This is what was meant earlier by the automatic consistency of \eqref{projectivehiggsintro}. Note that the analogous claim for \eqref{coupledEMp} is not true. Such a claim depends on the Young type of the symmetries of the coupled tensor and the particular generalized gradients considered.
\end{itemize}

The equations \eqref{projectivehiggsintro} and \eqref{stressenergyintro} are special cases of more general equations \eqref{projectivehiggs} and \eqref{stressenergy} defined in Section \ref{couplingsection}, in which the conditions stating that $\om$ be trace-free Codazzi, $\klie(\om) = 0$ and $\div(\om) = 0$, are replaced by the weaker condition, analogous to that for exterior forms in the supergravity-like equations \eqref{supergravity}, that there vanish the right-hand side of \eqref{divrictrkbairdintro}. 

In the rest of this section, the corresponding equations coupling a metric with a trace-free Killing tensor is described as well, although only at the Einstein/Ricci level. Because conformal Killing tensors generally do not exist on negatively curved manifold, these equations are likely of most interest when the background metric has nonnegative curvature in some sense. These equations can be seen also as a natural generalization of conditions coupling a metric on an oriented surface to holomorphic sections of powers of the canonical line bundle as in \cite[Section $8$]{Fox-2dahs}. 

Define quadratic maps $\stpm:\Ga(\symkt) \to \Ga(S^{2}\ctm)$ by 
\begin{align}\label{stmpdefined}
\begin{aligned}
\stp(\om) &= \begin{cases}
\om \tensor \om - \tfrac{1}{2}|\om|^{2}h & k = 1,\\
\rictr(\om \kwedge \om) - \tfrac{1}{2}|\om|^{2}h & k \geq 2,
\end{cases}&&
\stm(\om) &= \begin{cases}
(\om \tensor \om) + \tfrac{1}{2}|\om|^{2}h & k = 1,\\
\rictr(\om \kwedge \om) + \tfrac{1}{2k}|\om|^{2}h & k \geq 2.
\end{cases}
\end{aligned}
\end{align}
Note the asymmetry in the definitions; there is a $k$ in the definition of $\stm$ absent in the definition of $\stp$. 
The $\pm$ in the notation is motivated by thinking of sections of $S^{k}_{0}TM$ as sections of $S^{-k}_{0}\ctm$, so corresponding to negative integers.  Although this has no sense in general, it makes sense on oriented surfaces, for which conformal Killing and trace-free Codazzi tensors are exactly the real parts of holomorphic differentials \cite[Section $3$]{Fox-2dahs}.

By definition,
\begin{align}\label{trstmp}
&\tr \stp(\om) = \tfrac{2-n}{2}|\om|^{2}, & &\tr \stm(\om) = \tfrac{n+2k}{2k}|\om|^{2}.
\end{align}
The operators $\stpm$ depend on $h$ and, when necessary, this dependence is indicated by a subscript, $\stpm_{h}$. For $\om \in \Ga(\symkt)$ and $0 < f \in \cinf(M)$, 
\begin{align}
&\om\kwedge_{fh}\om = f^{2-k}\om\kwedge \om,& &\rictr_{fh}(\om\kwedge_{fh}\om) = f^{1-k}\rictr_{h}(\om\kwedge \om), & & |\om|_{fh}^{2} = f^{-k}|\om|^{2}.
\end{align}
It follows that the operators $\stpm$ are conformally invariant in the sense that
\begin{align}
&\stpm_{fh}(f^{\al}\om) = f^{2\al + 1-k}\stpm_{h}(\om), & &\om \in \Ga(\symkt).
\end{align}
For $\om \in \Ga(\symkt)$ define one-forms by
\begin{align}
&\lb \kcons_{h}(\om), X\ra = \lb \om, \imt(X)\klie_{h}(\om)\ra ,& 
&\lb \ccons_{h}(\om), X\ra = \lb \om, \imt(X)\clie_{h}(\om)\ra ,& 
&\lb \divcons_{h}(\om), X\ra = \lb \imt(X)\om, \div_{h}(\om)\ra,& 
\end{align}
for $X \in \Ga(\ctm)$.
By \eqref{lieconf}, with $\si = \tfrac{1}{2}d\log f$,
\begin{align}\label{conformalscaling1}
\begin{aligned}
f\kcons_{fh}(f^{\al}\om) & = f^{2\al + 1 -k}\left( \kcons_{h}(\om) + \tfrac{2\al + 1 - k}{2}\left(|\om|^{2}\si_{i} - \tfrac{n+2(k-2)}{n+k-3}\imt(\si^{\sharp})\rictr(\om \kwedge \om) \right)\right),\\
f\ccons_{fh}(f^{\al}\om) & = f^{2\al + 1 -k}\left( \ccons_{h}(\om) + \tfrac{2(\al  - k)k}{k+1}\left(|\om|^{2}\si_{i} + \tfrac{n+2(k-2)}{n+2(k-1)}\imt(\si^{\sharp})\rictr(\om \kwedge \om) \right)\right),\\
f\divcons_{fh}(f^{\al}\om) & = f^{2\al + 1-k}\left( \divcons_{h}(\om) + (n -2 + 2\al)\imt(\si^{\sharp})\rictr(\om \kwedge \om) \right).
\end{aligned}
\end{align}

Theorem \ref{divrictrtheorem} describes the identities of the form \eqref{generalizedbaird} for trace-free symmetric tensors.
\begin{theorem}\label{divrictrtheorem}
Let $M$ be a manifold of dimension $n \geq 2$ and let $h$ be a pseudo-Riemannian metric. For $\om \in \Ga(\symkt)$ and $X \in \Ga(\ctm)$ there hold
\begin{align}
\label{divrictrkbaird}
 \begin{aligned} h(X, \div(\stp(\om)))&=
 -2h(X, \kcons(\om)) + \tfrac{n-2}{n+k-3}h(X, \divcons(\om)),\\
&=  -2h(\imt(X)\klie(\om), \om) + \tfrac{n-2}{n+k-3}h(\imt(X)\om, \div(\om)),\\
&= -2h(\klie(\om), \sbl_{\klie}(X)(\om)) + \tfrac{n-2}{n+k-3}h(\sbl_{\div}(X)(\om), \div(\om)),
\end{aligned}\\
\label{divrictrcbaird}
 \begin{aligned} h(X, \div(\stm(\om))) &  
 = \tfrac{k+1}{k}h(X, \ccons(\om))  + \tfrac{n+ 2k}{n+2(k-1)}h(X, \divcons(\om))\\
 &= \tfrac{k+1}{k}h(\imt(X)\clie(\om),\om) + \tfrac{n+ 2k}{n+2(k-1)}h(\imt(X)\om, \div(\om))\\
&= \tfrac{k+1}{k}h(\clie(\om),\sbl_{\clie}(X)(\om)) + \tfrac{n+ 2k}{n+2(k-1)}h(\sbl_{\div}(X)(\om), \div(\om))).
\end{aligned}
\end{align}
\end{theorem}
\begin{proof}
Suppose $k \geq 2$ and let $\om \in \Ga(\symkt)$.
Using \eqref{domkl} yields
\begin{align}
\begin{aligned}
\tfrac{1}{2}D_{i}|\om|^{2} & = \om^{i_{1}\dots i_{k}}D_{i}\om_{i_{1}\dots i_{k}}  =  \tfrac{2k}{k+1}\kcons(\om)_{i} +\ccons(\om)_{i} + \tfrac{k(n+2(k-2))}{(n+k-3)(n+2(k-1))}\divcons(\om)_{i},
\end{aligned}
\end{align}
which shows
\begin{align}
\label{divrictr}
\begin{aligned}
\div(\rictr(\om \kwedge \om))_{i} &  = -\tfrac{2}{k+1}\kcons(\om)_{i} + \ccons(\om)_{i} + \left(1 +  \tfrac{n-2}{(n+k-3)(n + 2(k-1))}\right)\divcons(\om)_{i}\\
&  = -\tfrac{2}{k+1}\kcons(\om)_{i} + \ccons(\om)_{i} + \tfrac{(n+ k-2)(n+ 2(k-2)) + 2(n-2)}{(n+k-3)(n + 2(k-1))}\divcons(\om)_{i},
\end{aligned}
\end{align}
Contracting $\om_{i_{1}\dots i_{k}}$ with $D_{i_{1}}\om_{i_{2}\dots i_{k}i}$ and using \eqref{domkl} yields
\begin{align}\label{divnormpre}
\begin{aligned}
\om^{i_{1}\dots i_{k}}D_{i_{1}}\om_{i_{2}\dots i_{k}i} & =  - \tfrac{2}{k+1}\kcons(\om)_{i} + \ccons(\om)_{i} + \tfrac{2(1-k)}{(n+k-3)(n+2(k-1))}\divcons(\om)_{i}.
\end{aligned}
\end{align}
Differentiating $\rictr(\om \kwedge \om)_{ij} = \om_{ia_{1}\dots a_{k-1}}\om_{j}\,^{a_{1}\dots a_{k-1}}$ yields
\begin{align}\label{divrictrom}
\begin{aligned}
 w^{i_{1}\dots i_{k}}D_{i_{1}}\om_{i_{2}\dots i_{k}i} & 
= \div(\rictr(\om \kwedge \om))_{i} - \divcons(\om)_{i}.
\end{aligned}
\end{align}
Substituting \eqref{divrictrom} in \eqref{divnormpre} yields  
\begin{align}
\begin{aligned}
\label{divnorm}\tfrac{1}{2}\div(|\om|^{2}h)_{i} & = \tfrac{1}{2}D_{i}|\om|^{2}  = \tfrac{2k}{k+1}\kcons(\om)_{i} + \ccons(\om)_{i} + \tfrac{k(n+2(k-2))}{(n+k-3)(n+2(k-1))}\divcons(\om)_{i}.
\end{aligned}
\end{align}
Taking linear combinations of \eqref{divrictr} and \eqref{divnorm} in different ways and substituting the result in \eqref{stmpdefined} yields 
\begin{align}
\label{divrictrk}
\div(\stp(\om))_{i} &= \div(\rictr(\om \kwedge \om))_{i}  - \tfrac{1}{2}D_{i}|\om|^{2}  = -2\kcons(\om)_{i} + \tfrac{n-2}{n+k-3}\divcons(\om)_{i},\\
\label{divrictrc}\div(\stm(\om))_{i} & = \div(\rictr(\om \kwedge \om))_{i}  + \tfrac{1}{2k}D_{i}|\om|^{2} = \tfrac{k+1}{k}\ccons(\om)_{i} + \tfrac{n+ 2k}{n+2(k-1)}\divcons(\om)_{i},
\end{align}
where, when $k = 1$, \eqref{divrictrk}-\eqref{divrictrc} have sense if $\rictr(\om \kwedge \om)_{ij}$ is interpreted as $\om_{i}\om_{j}$.
The $k = 1$ cases of \eqref{divrictrk} and \eqref{divrictrc} follow from 
\begin{align}
\begin{aligned}
\div(\ga \tensor \ga - \tfrac{1}{2}|\ga|^{2}h)_{i} & =   \div(\ga)\ga_{i} -  2\ga^{p}D_{[i}\ga_{p]}  = \div(\ga)\ga_{i} - 2\ga^{p}\klie(\ga)_{ip},\\
\div(\ga \tensor \ga + \tfrac{1}{2}|\ga|^{2}h)_{i} & =   \div(\ga)\ga_{i} +  2\ga^{p}D_{(i}\ga_{p)}  = \tfrac{n+2}{2}\div(\ga)\ga_{i} + 2\ga^{p}\clie(\ga)_{ip},
\end{aligned}
\end{align}
valid for any $\ga \in \Ga(\ctm)$. 
Pairing $X$ with \eqref{divrictrk} and \eqref{divrictrc} yields \eqref{divrictrkbaird} and \eqref{divrictrcbaird}
\end{proof}

\begin{remark}
Among the possible linear combinations of \eqref{divrictr} and \eqref{divnorm}, the combinations on the right-hand sides of \eqref{divrictrk} and \eqref{divrictrc} are distinguished by the absence of $\ccons(\om)$ and $\kcons(\om)$. This is relevant because, by \eqref{conformalscaling1},  the conformal scaling of $\kcons(\om)$, $\ccons(\om)$, and $\divcons(\om)$ is different. The first two rescale in a way dependent on $k$, while the last rescales in a way dependent only on dimension. This means that given a conformal structure $[h]$ it makes sense to impose that either $\kcons(\om)$ or $\ccons(\om)$ vanish, but does not make sense to require that both vanish. The vanishing of $\divcons(\om)$ can be treated as a condition selecting within the conformal class. 
\end{remark}

\begin{lemma}\label{tracefreeconstantlemma}
Let $(M, h)$ be a pseudo-Riemannian manifold $M$ of dimension $n \geq 3$. Consider sequences of tensors $\{\om(k)\in\Ga(S^{k}_{0}(\ctm)): k \geq 1\}$ and $\{\ga(k)\in\Ga(S^{k}_{0}(\ctm)): k \geq 1\}$ such that
\begin{itemize}
\item $\om^{(k)} \in\Ga(S^{k}_{0}(\ctm)) \cap \ker \divcons \cap \ker \kcons$,
\item $\ga^{(k)} \in\Ga(S^{k}_{0}(\ctm)) \cap \ker \divcons \cap \ker \ccons$, and
\item the tensors $\stcod =  \sum_{k \geq 1}a_{k}\stp(\om^{(k)})$ and $\stkill = \sum_{k \geq 1}b_{k}\stm(\ga^{(k)})$ converge pointwise to smooth sections of $S^{2}(\ctm)$ for some sequences $\{a_{k} \in \rea: k \geq 1\}$ and $\{b_{k} \in \rea: k \geq 1\}$.
\end{itemize}
Then the equations
\begin{align}\label{stressenergy2}
\sric - \tfrac{1}{2}\scal h + \tfrac{n-2}{2n}\ka h = \stcod + \stkill 
\end{align}
are consistent and 
\begin{align}\label{stressenergyconstant}
\ka =  \scal - \sum_{k \geq 1}a_{k}|\om^{(k)}|^{2} + \sum_{k \geq 1}\tfrac{n+2k}{k(n-2)}b_{k}|\ga^{(k)}|^{2} 
\end{align}
is a constant. 
If $n > 2$ the equations \eqref{stressenergy2} hold for some $\ka \in \rea$ if and only if 
\begin{align}\label{tracefreericcondition}
\sric - a_{1}\om^{(1)}\tensor\om^{(1)} - \sum_{k \geq 2}a_{k}\rictr(\om^{(k)}\kwedge \om^{(k)}) - b_{1}\ga^{(1)}\tensor\ga^{(1)} - \sum_{k \geq 2}b_{k}\rictr(\ga^{(k)}\kwedge \ga^{(k)}) = \tfrac{\ka}{n} h,
\end{align}
in which case $\ka$ has the form \eqref{stressenergyconstant}.
\end{lemma}

\begin{proof}
By Theorem \ref{divrictrtheorem}, $\stcod$ and $\stkill$ are divergence free.  Taking the divergence of both sides of \eqref{stressenergy2} shows that $\ka$ must be constant if \eqref{stressenergy2} is to admit solutions. The form \eqref{stressenergyconstant} for $\ka$ follows by tracing both sides of \eqref{stressenergy2}. Evidently \eqref{stressenergy2} implies \eqref{tracefreericcondition}. If there holds \eqref{tracefreericcondition}, taking the divergence of \eqref{tracefreericcondition} and using the traced differential Bianchi identity $2\div(\sric) = d\scal$ and Theorem \ref{divrictrtheorem} shows
\begin{align}
0 = \tfrac{n-2}{n}\left( \scal - \sum_{k \geq 1}a_{k}|\om^{(k)}|^{2} + \sum_{k \geq 1}\tfrac{n+2k}{k(n-2)}b_{k}|\ga^{(k)}|^{2}  \right),
\end{align}
so that, since $n > 2$, $\ka$ as in \eqref{stressenergyconstant} is constant. With \eqref{tracefreericcondition} this implies \eqref{stressenergy2}.
\end{proof}

\begin{remark}
The hypotheses of Lemma \ref{tracefreeconstantlemma} are satisfied provided $\om^{(k)}$ is a rank $k$ trace-free Codazzi tensor and $\ga^{(k)}$ is a rank $k$ Killing tensor.
\end{remark}

\begin{remark}
The equations \eqref{stressenergy2} make sense for $h$ of any signature, although for indefinite signature $h$ the notation $|\om^{(k)}|^{2}$ is misleading as this expression need not be positive. 
\end{remark}


\begin{corollary}
On an $n$-manifold $M$ a pair $(h, \om)$ comprising a pseudo-Riemannian metric $h$ and a tensor $\om \in \Ga(\symkt)$ solves the system
\begin{align}\label{stressenergy}
\begin{aligned}
0 & = \div(\stp(\om))  =-2\kcons(\om) + \tfrac{(n-2)}{(n+k-3)}\divcons(\om) ,\\
\sric &- \tfrac{1}{2}\scal h + \tfrac{n-2}{2n}\ka h = c\left(\rictr(\om\kwedge \om) -  \tfrac{1}{2}|\om|^{2}h\right) = c\stp(\om),
\end{aligned}
\end{align}
for some real constants $\ka$ and $c$ if and only if $h$ solves the Einstein field equations with energy momentum tensor $c\stp(\om)$ and cosmological constant $\cosmo = \tfrac{n-2}{2n}\ka = \tfrac{n-2}{2n}\left(\scal - c|\om|^{2}\right)$. 
\end{corollary}

The system \eqref{stressenergy} has been written so as to make readily apparent its formal resemblance to the Einstein-Maxwell system for a metric and a two-form.

\begin{example}
In the case $k = 1$, for a solution $(h, \om)$ of \eqref{stressenergy}, the one-form $\om$ is $h$-harmonic. 
If, moreover, $\om = d\phi$ for some $\phi \in \cinf(M)$, then $\phi$ solves the wave equation $\lap_{h}\phi = 0$, and in Lorentzian signature can be interpreted as a massless scalar field, so the pair $(h, d\phi)$ satisfies the Einstein scalar field equations. In the Riemannian case, were $M$ compact, then $\phi$ would be harmonic, so constant, and $\om$ would vanish identically, but if $\om$ is not required to be exact, there can still be interesting solutions in Riemannian signature.
\end{example}

Thinking of $\om \in \Ga(\symkt)$ as a $S^{k-1}_{0}(\ctm)$-valued one-form, $(\om \kwedge \om)_{ijkl}$ can be viewed as a curvature term. 
Lemma \ref{projectivehiggslemma} shows that a metric and $\om \in \Ga(\symkt)\cap \ker \div \cap \ker \klie$ such that the modified curvature $\riem - \tfrac{1}{4}(\om \kwedge \om)$ is projectively flat, meaning it is a multiple of $h\kwedge h$, yield a solution of \eqref{stressenergy}.

\begin{lemma}\label{projectivehiggslemma}
Let $h$ be a pseudo-Riemannian metric on a manifold $M$ of dimension $n \geq 3$. Suppose $\om \in\Ga(\symkt) \cap \ker \div$ satisfies $\kcons(\om) = 0$ and that there is $\ka \in \cinf(M)$ such that the curvature of $h$ satisfies
\begin{align}\label{projectivehiggs}
\riem - c(\om \kwedge \om) = -\tfrac{\ka}{n(n-1)}h\kwedge h,
\end{align}
for some $c \in \rea$. Then $h$ and $\om$ solve the equations \eqref{stressenergy}. In particular, $\ka$ is a constant. 

If $n = 3$, then $(h, \om)$ solves \eqref{projectivehiggs} for $c \in \rea$ if and only if it solves \eqref{stressenergy}.
\end{lemma}

\begin{proof}
By \eqref{rictralbe}, tracing \eqref{projectivehiggs} yields $\ric = \tfrac{\ka}{n}h+ c\rictr(\om \kwedge \om)$. Tracing this yields $\scal = \ka + c|\om|^{2}$, and calculating the trace-free part of $\ric$ yields the last equation of \eqref{stressenergy}. From Lemma \ref{tracefreeconstantlemma} it follows that $\ka$ is a constant. 
If $(h, \om)$ solves \eqref{stressenergy}, then, by \eqref{tfweyl} and \eqref{stressenergy}, 
\begin{align}\label{stressenergyinverse}
\begin{aligned}
&\riem - c(\om \kwedge \om)= \sW - c\tf(\om \kwedge \om) -\tfrac{2}{n-2}\tf\left( \ric - c\rictr(\om \kwedge \om)\right) \kwedge h - \tfrac{1}{n(n-1)}\left( \scal - c|\om|^{2}\right) h \kwedge h \\
&= \sW - c\tf(\om \kwedge \om)  - \tfrac{\ka}{n(n-1)} h \kwedge h,
\end{aligned}
\end{align}
where the conformal Weyl tensor $\sW$ of $h$ is the trace-free part of $\riem$. If $n = 3$, then there vanish $\sW$ and $\tf(\om \kwedge \om)$, and \eqref{stressenergyinverse} shows that $h$ and $\om$ solve \eqref{projectivehiggs}.
\end{proof}

\section{Examples of solutions to the coupled projectively flat and coupled Einstein equations}\label{examplesection}
This section records examples of solutions of the coupled projectively flat equations \eqref{projectivehiggsintro} and coupled Einstein equations \eqref{stressenergyintro}. There is no pretense at completeness. Rather, the goals are to illustrate the abundance of solutions and the diverse nature of possible constructions. The different subsections are essentially independent, and also this entire section is independent of Section \ref{propertiessection}.

\subsection{Examples related with submanifold geometry}\label{geometricexamplesection}
This section records examples of solutions of the coupled projectively flat equations \eqref{projectivehiggsintro} coming from minimal immersions. These example arise from consideration of classes of hypersurfaces in special ambient spaces and characterized by conditions on their second fundamental forms. 

The \emph{second fundamental form} of an immersion $i:M \to N$ of a manifold $M$ in a manifold $N$ equipped with a projective structure $[\hnabla]$ is a normal bundle valued symmetric two-tensor on $M$, $\sff \in \Ga(S^{2}(T^{\ast}M)\tensor i^{\ast}(TN)/TM)$. By definition $\sff(X, Y)$ is the projection of $\hnabla_{X}Y$ onto the normal bundle for $X, Y \in \Ga(T^{\ast}M)$ and any torsion-free representative $\hnabla \in [\hnabla]$. Each nonvanishing section $W$ over $U \subset M$ of the normal bundle of a hypersurface immersion determines an identification of the second fundamental form with a symmetric tensor $\Pi$ on $U$ defined by $\sff(X, Y) = \Pi(X, Y)W$. Different such sections determine conformally related tensors. The tensor $\Pi$ is said to be the second fundamental form with respect to the transversal $W$ and $\hnabla$. 

\subsubsection{Examples from mean curvature zero immersions in space forms}\label{constraintequationsexample}
Theorem \ref{hypersurfacetheorem} yields solutions of the coupled projectively flat equations \eqref{projectivehiggsintro} with $c$ having either sign. 
\begin{theorem}\label{hypersurfacetheorem}
Let $(N, g)$ be an $(n+1)$-dimensional pseudo-Riemannian space form with scalar curvature $\aR_{g}$ and let $i:M \to N$ be an immersion of an $n$-dimensional hypersurface $M$ such that $h = i^{\ast}(g)$ is nondegenerate. Let $\Pi$ be the second fundamental form of the immersion $i$ defined with respect to a unimodular transverse vector field $Z$ orthogonal to $i(M)$ and satisfying $\ep = |Z|^{2}_{g} \in \{\pm 1\}$. 
If $g$ has constant curvature equal to $-\tfrac{\aR_{g}}{n(n+1)} g\kwedge g$, and $i(M)$ has mean curvature zero, then $(h, \Pi)$ solves the coupled projectively flat equations \eqref{projectivehiggsintro} with $c = -\ep$ and $\ka = \scal + \ep |\Pi|^{2} = \tfrac{n-1}{n+1}\aR_{g}$, where $\scal$ is the scalar curvature of $h$.
\end{theorem}
\begin{proof}
By definition, for $X, Y \in \Ga(TM)$, $\Pi(X, Y) = \ep h(SX, Y)$, where $S$ is the shape operator with respect to the normal $Z$. The Gauss-Codazzi equations and the assumption that $g$ have constant sectional curvature imply that $\cod(\Pi) = 0$ and that the curvature tensor of $g$ satisfies $\riem + \ep(\Pi \kwedge \Pi) = - \tfrac{\aR_{g}}{n(n+1)}(h \kwedge h)$. If the immersion has mean curvature zero, then $\tr_{h}(\Pi) = 0$, so $\div(\Pi) = 0$ and $\Pi \in \Ga(S^{2}_{0}(\ctm))\cap \ker \div \cap \ker \klie$. 
\end{proof}
In the case where $M$ is a spacelike mean curvature zero hypersurface in a $4$-dimensional Lorentzian spacetime solving the Einstein field equations, the equations $\div(\Pi) = 0$ and $\ka = \aR_{h} - |\Pi|^{2} = 0$ are the \emph{constraint equations} of general relativity \cite{Bartnik-Isenberg, Carlotto, Choquet-Burhat-Isenberg-Pollack, Corvino-Pollack, Isenberg-constant-mean-curvature}. Their solvability is a necessary condition for the existence of solutions to \eqref{stressenergyintro}.

\subsubsection{Examples from mean curvature zero Legendrian immersions}\label{kahlerexample}
Let $(N, \Om)$ be a $2n$-dimensional symplectic manifold and let $i:M \to N$ be a Lagrangian immersion. Tensors on $M$ and $N$ are labeled with lowercase and uppercase Latin indices, respectively.
A \emph{(para/pseudo)-Kähler structure} is a triple $(G, A, \Omega)$ comprising a (pseudo-)Riemannian metric $G$, an integrable (para)complex structure, $A$, and a symplectic form $\Omega$, which are \emph{compatible} in the sense that,  for all $X, Y \in \Ga(TN)$, $A(AX) = \ep X$, $\Om(X, Y) = G(AX, Y)$, $\Om(AX, Y) = \ep G(X, Y)$, where, when $\ep = 1$, the qualifier \emph{para} applies, and, when $\ep = -1$, the qualifier \emph{pseudo} is used if $G$ has indefinite signature. 

By definition, a (para/pseudo)-Kähler manifold $(N, G, A)$ has \emph{constant (para)-holomorphic sectional curvature $4\hat{c}$} if its metric curvature tensor $\widehat{\riem}$ has the form
\begin{align}\label{chsc}
\widehat{\riem} = -\hat{c}( G \kwedge G + 3 \ep \Om \cdot \Om),
\end{align}
where $\kwedge$ and $\cdot$ are the bilinear maps defined in \eqref{kwedgedefined} and \eqref{kformphi}.
In this case, its Ricci and scalar curvatures equal $2(n+1)\hat{c}G$ and $4n(n+1)\hat{c}$, respectively. 

Let $(N, G, A, \Omega)$ be a $2n$-dimensional (para/pseudo)-Kähler manifold with canonical (Levi-Civita) connection $\hnabla$. An immersion $i:M \to N$ is \emph{nondegenerate} if the induced tensor $h = i^{\ast}(G)$ is nondegenerate. Let $i:M \to N$ be a nondegenerate Lagrangian immersion. The second fundamental form equals the projection of $\hnabla_{X}Ti(Y)$ onto the normal bundle of $M$ and determines the tensor $\Pi \in \Ga(S^{3}\ctm)$ defined by
\begin{align}\label{pidefined}
\Pi(X, Y, Z) = \Omega(\hnabla_{X}Ti(Y), Ti(Z))= G(A\hnabla_{X}Ti(Y), Ti(Z)) = - G(\hnabla_{X}Ti(Y), ATi(Z)). 
\end{align}
The tensor $\Pi$ is symmetric because $\hnabla$ is torsion-free and the immersion is Lagrangian. Define $\Pis \in \Ga(S^{2}\ctm\tensor TM)$ by $h(\Pis(X, Y), Z) = \Pi(X, Y, Z)$. Although $\Pis$ is not the second fundamental form as such, as it is twisted by the (para)-complex structure, it carries the same information as the second fundamental form. In particular, by the complete symmetry of $\Pi$, the Lagrangian immersion has mean curvature zero (by definition meaning $\tr_{h}(\Pi) = 0$) if and only if $\tr(\Pis) =0$. 

\begin{theorem}\label{constantsecttheorem}
Let $i:M \to N$ be a nondegenerate mean curvature zero Lagrangian immersion of the $n$-manifold $M$ in the $2n$-dimensional (para/pseudo-)Kähler manifold $(N, G, \Om, A)$ with constant (para)-holomorphic curvature $4\hat{c}$. Let $h = i^{\ast}(G)$ be the induced metric and let $\Pi \in \Ga(S^{3}\ctm)$ be the tensor defined from the second fundamental form of $i$ by \eqref{pidefined}. Then $(h, \Pi)$ solves the coupled projectively flat equations \eqref{projectivehiggsintro} with $c = \ep$ and $\ka = \hat{c}n(n-1)$.
\end{theorem}

\begin{proof}
Because $i$ is nondegenerate, $Ti(TM)$ and $ATi(TM)$ are transverse, so the projection onto $Ti(TM)$ along $ATi(TM)$, $P \in \Ga(\eno(i^{\ast}(TN)))$, is well-defined. The torsion-free connection $D$ on $M$ defined by $Ti(D_{X}Y) = P\hnabla_{X}Ti(Y)$ is the Levi-Civita connection of $h$.
By the definition of $\Pis$, for all $X, Y, Z \in \Ga(TM)$ there holds
\begin{align}
\begin{aligned}
\Om(\hnabla_{X}Ti(Y), Ti(Z)) &= 
h(\Pis(X, Y), Z) = G(Ti(\Pis(X, Y)), Ti(Z)) = \ep\Om(ATi(\Pis(X, Y)), Ti(Z)),
\end{aligned}
\end{align}
and by the nondegeneracy of $\Om$ this shows the first identity of 
\begin{align}\label{pksplitting}
\begin{aligned}
&\hnabla_{X}Ti(Y) = Ti(D_{X}Y) + \ep ATi(\Pis(X, Y)),&&
\hnabla_{X}ATi(Y) = ATi(D_{X}Y) + Ti(\Pis(X, Y)).
\end{aligned}
\end{align}
The second identity of \eqref{pksplitting} follows from the first because $A$ is $\hnabla$-parallel.

Let $\hat{R}_{IJK}\,^{L}$ be the curvature of $\hnabla$ so that $\widehat{\riem}_{IJKL} = \hat{R}_{IJK}\,^{P}G_{PL}$ is the curvature tensor of $G$ and let $\riem$ be the metric curvature tensor of $h$. 
From \eqref{pksplitting} it follows that there are tensors $\sT$ and $\sN$ on $M$ of metric curvature tensor type and such that
\begin{align}\label{stntdefined}
\hat{R}(X, Y)Ti(Z)= Ti(\sT^{\sharp}(X, Y) Z) + \ep ATi(\sN^{\sharp}(X, Y)Z)
\end{align}
for all $X, Y, Z\in \Ga(TM)$, where $h(\sT^{\sharp}(X, Y)Y, W) = \sT(X, Y, Z, W)$ and similarly for $\sN^{\sharp}$. Straightforward computations using \eqref{pksplitting} show that
\begin{align}\label{kic}
&\sT= \riem - 2\ep \Pi \kwedge \Pi,&
&\sN=  2\cod(\Pi).
\end{align}
Because there is supposed $\tr_{h}(\Pi) = 0$, tracing \eqref{kic} yields
\begin{align}
\label{gic2b}&\rictr(\sT)= \ric -\ep\rictr(L \kwedge L)  , & &\tr_{h}\rictr(\sT) = \scal - \ep|L|^{2},&&\rictr(\sN) = \div(\Pi),&&\tr_{h}\rictr(\sN) = 0.
\end{align}
Together \eqref{kic} and \eqref{gic2b} implies $\sN = \klie(\Pi)$.
Define $(\imt(A)\widehat{\riem})_{IJKL} = A_{I}\,^{P}\widehat{\riem}_{PJKL}$.
If $G$ has constant (para)-holomorphic sectional curvature $\hat{c}$, then $\widehat{\riem}$ has the form \eqref{chsc} and it follows that
\begin{align}\label{chsctwist}
(\imt(A)\widehat{\riem})_{IJKL} = 2\hat{c}(G_{K[I}\Om_{J]L}+ \Om_{K[I}G_{J]L} + G_{IJ}\Om_{KL})
\end{align}
It follows from \eqref{stntdefined} together with \eqref{chsc} and \eqref{chsctwist} that pulling back $\widehat{\riem}$ and $\imt(A)\widehat{\riem}$ to $M$ via $i$ yields
\begin{align}\label{stsnconst}
\begin{aligned}
&\sT = i^{\ast}(\widehat{\riem}) =-\hat{c} h\kwedge h,&&\sN= i^{\ast}(\imt(A)\widehat{\riem}) = 0,\\
 &\rictr(\sT) = \hat{c}(n-1)h, && \tr_{h}\rictr(\sT) = \hat{c}n(n-1). 
\end{aligned}
\end{align}				
Together \eqref{kic}, \eqref{gic2b}, and \eqref{stsnconst} show that $\Pi \in \ker \klie \cap \ker \div$ and that $(h, \Pi)$ satisfies $\riem  = -\hat{c}h \kwedge h + 2\ep \Pi \kwedge \Pi$ and so also $\ric  = \hat{c}(n-1)h + \ep \rictr(\Pi \kwedge \Pi)$ and $\scal = \hat{c}n(n-1) + \ep |\Pi|^{2}$.
\end{proof}

\subsection{Einstein statistical structures, flat projective structures, and cubic Codazzi tensors}\label{statisticalsection}
The notions described next are special cases of more general ones introduced in \cite{Fox-ahs, Fox-2dahs, Fox-crm} and described more completely in \cite[Sections $8-10$]{Fox-conelike}.

A \emph{statistical structure} $(\nabla, h)$ is a pair comprising a pseudo-Riemannian metric $h$ and a torsion-free affine connection $\nabla$ satisfying $\nabla_{[i}h_{j]k} = 0$. This means that the \emph{cubic form} $\bt_{ijk} = \nabla_{i}h_{jk}$ is symmetric. The torsion-free \emph{conjugate} connection $\bnabla$ defined by $\bnabla = \nabla + h^{kp}\nabla_{i}h_{jp}$ satisfies $\bnabla_{i}h_{jk} = -\nabla_{i}h_{jk} = -\bt_{ijk}$, so also $\bnabla_{[i}h_{j]k} = 0$. Hence $(\bnabla, h)$ is a statistical structure with cubic form $\bar{\bt}_{ijk} = -\bt_{ijk}$. The conjugate connection of $\bnabla$ is $\nabla$, so conjugacy is an involution on the space of statistical structures. A statistical structure is self-conjugate if and only if $\nabla$ is the Levi-Civita connection of $h$. 

\begin{remark}
For background on statistical structures see \cite{Amari, Ay-Jost-Le-Schwachhofer}
Statistical structures are also called \emph{Codazzi structures} \cite{Shima} and are essentially the same as what I have called \emph{exact AH structures} \cite{Fox-2dahs}. The \emph{statistical} terminology originates in the work of Amari \cite{Amari} applying differential geometric methods to study parametric statistics. While from a purely geometric point of view the terminology may seem poorly motivated, it has become a popular way of referring to these objects.
\end{remark}

A statistical structure is \emph{special} if $\nabla_{i}\det h = 0$. Because the conjugate connection $\bnabla$ satisfies $\bnabla_{i}\det h = -\nabla_{i}\det h$, the conjugate of a special statistical structure is special. Define $\ell_{i} = \bt_{ip}\,^{p}$, so that $(\nabla, h)$ is special if and only if $\ell_{i} = 0$.

By definition, the curvature of a statistical structure $(\nabla, h)$ is the curvature $R_{ijk}\,^{l}$ of $\nabla$. Its Ricci curvature is $R_{ij} = \rictr(R)_{ij} = R_{pij}\,^{p}$ and its scalar curvature is $\statscal = h^{ij}R_{ij}$.
Given a statistical structure $(\nabla, h)$, the difference tensor of $\nabla$ and the Levi-Civita connection $D$ of $h$ is $\nabla - D = -\tfrac{1}{2}\bt_{ij}\,^{k}$. Similarly $\bnabla = D + \tfrac{1}{2}\bt_{ij}\,^{k}$. More generally, for $t \in \rea$, the connection $\nabla^{t} = D - t\bt_{ij}\,^{k}$ constitutes with $h$ a statistical structure having cubic form $2t\bt_{ijk}$ and for which the conjugate statistical structure is $(\nabla^{-t}, h)$.

Let $\sR_{ijk}\,^{l}$ be the curvature of $D$. The curvature $R^{t}_{ijkl}= R^{t}_{ijk}\,^{p}h_{pl}$ of $(\nabla^{t}, h)$ satisfies
\begin{align}\label{statt}
&\begin{aligned}
R^{t}_{ijkl} & = R^{t}_{ijk}\,^{a}h_{al} = h_{al}\left(\sR_{ijk}\,^{a} - 2tD_{[i}\bt_{j]k}\,^{a} + 2t^{2}\bt_{p[i}\,^{a}\bt_{j]k}\,^{p}\right)\\
& = \sR_{ijkl} - 2t\cod(\bt)_{ijkl} - t^{2}(\bt \kwedge \bt)_{ijkl},\end{aligned}\\
\label{stattric}
&\begin{aligned}
\rictr(R^{t})_{ij} & = \rictr(\sR)_{ij} - t(\div(\bt)_{ij} - D_{i}\ell_{j}) - t^{2}\rictr(\bt \kwedge \bt)_{ij},
\end{aligned}\\f
\label{stattscal}
&\begin{aligned}
\statscal^{t} & = \scal - t^{2}|\bt|^{2},
\end{aligned}\\
\label{statteinstein}
&\begin{aligned}
R^{t}_{(ij)} - \tfrac{1}{2}\statscal^{t}h_{ij} & = G_{ij}  -  t(\div(\bt)_{ij} - \clie(\ell)_{ij}) - t^{2}(\rictr(\bt \kwedge \bt)_{ij} - \tfrac{1}{2}|\bt|^{2}h_{ij}),
\end{aligned}\\
\label{stattrictracefree}
&\begin{aligned}
R^{t}_{(ij)} - \tfrac{1}{n}\statscal^{t}h_{ij} & = \rictr(\sR)_{ij} - \tfrac{1}{n}\scal h_{ij} - t^{2}(\rictr(\bt \kwedge \bt)_{ij} - \tfrac{1}{n}|\bt|^{2}h_{ij}),
\end{aligned}
\end{align}
where $n = \dim M$. From \eqref{statt} there follow
\begin{align}\label{conjugatecurv}
&R^{t}_{ijlk} = -R^{-t}_{ijkl},& & R^{t}_{ij(kl)} = -2t\cod(\bt)_{ijkl},\\
\label{conjugatericcitrace}
&R^{t}_{ip}\,^{p}\,_{j} = \rictr(R^{-t})_{ij}, && \rictr(R^{-t})_{(ij)} - \rictr(R^{t})_{(ij)} = 2t \div(\bt)_{ij}.
\end{align}

A statistical structure has \emph{self-conjugate} curvature if its curvature tensor equals the curvature tensor of the conjugate statistical structure. More generally, a tensor associated with a statistical structure is \emph{self-conjugate} if it equals the corresponding tensor associated with the conjugate statistical structure. For example, it follows from \eqref{stattscal} that the scalar trace of the curvature of a statistical structure is self-conjugate, and by \eqref{conjugatericcitrace} a statistical structure has self-conjugate Ricci tensor if and only if its cubic form is divergence free.

\begin{lemma}\label{selfconjugatecurvaturelemma}
The following are equivalent for a statistical structure $(\nabla, h)$:
\begin{enumerate}
\item It has self-conjugate curvature.
\item Its cubic form is a Codazzi tensor.
\item Its curvature tensor satisfies $R_{ij(kl)} = 0$, so has the symmetries of a metric curvature tensor.
\end{enumerate}
A special statistical structure $(\nabla, h)$ has self-conjugate Ricci curvature if and only if its cubic form is divergence free.
\end{lemma}
\begin{proof}
The equivalence of the three statements follows from \eqref{statt} and \eqref{conjugatecurv} with $t = \pm 1/2$.
It follows from \eqref{stattric} that a special statistical structure has self-conjugate Ricci curvature if and only if its cubic form is divergence free. 
\end{proof}

\begin{lemma}\label{statisticalbianchilemma}
For a statistical structure $(\nabla, h)$ on a manifold of dimension $n$ there holds
\begin{align}\label{statisticalbianchi}
 \tfrac{n-2}{n}d\statscal_{i} + \tfrac{1}{4}\ell^{p}d\ell_{pi} - h_{ia}h^{pq}(\nabla_{p}S_{q}\,^{a} + \bnabla_{p}\bar{S}_{q}\,^{a})= \bt^{abc}R_{i(abc)} = \tfrac{1}{2}\div(\stp(\bt))_{i} - \tfrac{1}{2}\bt_{i}\,^{pq}\div(\bt)_{pq},
\end{align}
in which $S_{ij} = R_{(ij)} - \tfrac{\statscal}{n}h_{ij}$ is the trace-free part of the symmetrized Ricci tensor of $(\nabla, h)$, and $\bar{S}_{ij}$ is the trace-free part of the symmetrized Ricci tensor of the conjugate statistical structure.
\end{lemma}

\begin{proof}
By definition $R_{ij} = \tfrac{1}{n}\statscal h_{ij} + \tfrac{1}{4}d\ell_{ij} + S_{ij}$. Differentiating this and 
antisymmetrizing the result yields
\begin{align}\label{conserve1}
2\nabla_{[i}R_{j]k} &  = \tfrac{2}{n}d\statscal_{[i}h_{j]k} - \tfrac{1}{4}\nabla_{k}d\ell_{ij} + 2\nabla_{[i}S_{j]k}.
\end{align}
Contracting \eqref{conserve1} with $h^{jk}$ yields
\begin{align}\label{conserve2}
\begin{aligned}
2h^{jk}\nabla_{[i}R_{j]k} &= \tfrac{n-1}{n}d\statscal_{i} + \tfrac{1}{4}h^{pq}\nabla_{p}d\ell_{qi}  + h^{pq}\nabla_{i}S_{pq} - h^{pq}\nabla_{p}S_{qi}\\
& =  \tfrac{n-1}{n}d\statscal_{i} + \tfrac{1}{4}h^{pq}\nabla_{p}d\ell_{qi}  -S_{pq}\nabla_{i} h^{pq} - h^{pq}(h_{ai}\nabla_{p}S_{q}\,^{a} + S_{q}\,^{a}\nabla_{p}h_{ai})\\
& =  \tfrac{n-1}{n}d\statscal_{i} + \tfrac{1}{4}h^{pq}\nabla_{p}d\ell_{qi}   - h^{pq}h_{ai}\nabla_{p}S_{q}\,^{a}.
\end{aligned}
\end{align}
in which the last inequality follows from $\nabla_{i}h^{jk} = -\bt_{i}\,^{jk}$ and that $S_{ij}$ is trace-free. 

By \eqref{conjugatericcitrace}, $R_{ip}\,^{p}\,_{j} = \bar{R}_{ij} = \tfrac{1}{n}\statscal h_{ij} - \tfrac{1}{4}d\ell_{ij} + \bar{S}_{ij}$.
Using this together with the differential Bianchi identity yields
\begin{align}\label{conserve3}
\begin{aligned}
2h^{jk}\nabla_{[i}R_{j]k} &= h^{jk}\nabla_{p}R_{ijk}\,^{p} = \nabla_{p}R_{ia}\,^{ap} - R_{ijk}\,^{p}\nabla_{p}h^{jk}\\
&= \tfrac{1}{n}d\statscal_{i} - \tfrac{1}{4}\nabla_{p}d\ell_{i}\,^{p} + \nabla_{p}\bar{S}_{i}\,^{p} + \bt^{abc}R_{ibca} \\
&= \tfrac{1}{n}d\statscal_{i} + \tfrac{1}{4}h^{pq}\nabla_{p}d\ell_{qi} - \tfrac{1}{4}\ell^{q}d\ell_{qi} + h_{ia}h^{pq}\bnabla_{p}\bar{S}_{q}\,^{a} + \bt^{abc}R_{iabc}.
\end{aligned}
\end{align}
in which the last equality follows from $\nabla_{p}\bar{S}_{i}\,^{p} = h^{pq}h_{ia}\bnabla_{p}\bar{S}_{q}\,^{a}$ and the complete symmetry of $L^{abc}$. Combining \eqref{conserve2} and \eqref{conserve3} yields the first equality of \eqref{statisticalbianchi}.

Differentiating $\stp(L)$ yields $\div(\stp(L))_{i} = \bt_{i}\,^{pq} -2 \bt^{abc}\cod(\bt)_{iabc}$. Together with \eqref{conjugatecurv} this yields the second equality of \eqref{statisticalbianchi}.
\end{proof}

The following definition is a special case of one made in \cite{Fox-conelike}, specializing a more generl definition from \cite{Fox-ahs, Fox-2dahs}.
A special statistical structure $(\nabla, h)$ is \emph{Einstein} if it satisfies the following two conditions:
\begin{itemize}
\item (\emph{Naive Einstein}) $\rictr(R)$ and $\rictr(\bar{R})$ are multiples of $h$.
\item (\emph{Conservation}) The scalar curvature $\tr_{h}\rictr(R) = \tr_{h}\rictr(\bar{R})$ is constant. 
\end{itemize}

\begin{lemma}\label{naivelemma}
On manifold of dimension $n > 2$, a special statistical structure with self-conjugate curvature that is naive Einstein is Einstein.
\end{lemma}
\begin{proof}
Two different proofs are available. By Lemma \ref{selfconjugatecurvaturelemma}, that a special statistical structure have self-conjugate curvature means its cubic form is trace-free Codazzi. By \eqref{statteinstein} in this case $R_{(ij)} - \tfrac{1}{2}\statscal h_{ij}  = G_{ij}  -  \tfrac{1}{4}(\rictr(\bt \kwedge \bt)_{ij} - \tfrac{1}{2}|\bt|^{2}h_{ij})$ and by Theorem \ref{divrictrtheorem} this is divergence free. If the special statistical structure is naive Einstein, this implies $(n-2)d\statscal_{i} = 0$. Alternatively, for a special statistical structure with self-conjugate curvature, \eqref{statisticalbianchi} implies $(n-2)d\statscal_{i}= n h_{ia}h^{pq}(\nabla_{p}S_{q}\,^{a} + \bnabla_{p}\bar{S}_{q}\,^{a})$, and the naive Einstein condition implies the vanishing of the last expression. In either case, $(n-2)d\statscal_{i} = 0$, so $\statscal$ is constant, because $n > 2$.
\end{proof}

A statistical structure $(\nabla, h)$ is \emph{projectively flat} if $\nabla$ is projectively flat. A statistical structure is \emph{conjugate projectively flat} if the conjugate statistical structure is projectively flat.

\begin{lemma}\label{ssslemma}
For a special statistical structure $(\nabla, h)$ on an $n$-manifold the following are equivalent:
\begin{enumerate}
\item\label{sss1} The curvature of $(\nabla, h)$ satisfies $R_{ijkl} = \al (h\kwedge h)_{ijkl}$ for a constant $\al \in \rea$.
\item\label{sss2} $(\nabla, h)$ is projectively flat and has self-conjugate curvature.
\item\label{sss3} $(\nabla, h)$ is projectively flat and Einstein.
\end{enumerate}
If there hold \eqref{sss1}-\eqref{sss3}, then $-n(n-1)\al = \statscal$. If $n > 2$, the conditions \eqref{sss1}-\eqref{sss3} are equivalent to \eqref{sss4}.
\begin{enumerate}
\setcounter{enumi}{3}
\item\label{sss4} $(\nabla, h)$ is projectively flat and conjugate projectively flat. 
\end{enumerate}
\end{lemma}

\begin{proof}
A special statisical structure has symmetric Ricci tensor and, because the conjugate statistical structure is also special, symmetric conjugate Ricci tensor. Consequently, its projective Weyl tensor is $B_{ijk}\,^{l} = R_{ijk}\,^{l} +2\delta_{[i}\,^{l}P_{j]k} - 2 \delta_{k}\,^{l}P_{[ij]}$ where $(1-n)P_{ij} = R_{ij} - \tfrac{2}{n+1}R_{[ij]} = R_{ij}$.

If there holds \eqref{sss1}, then, by \eqref{conjugatericcitrace}, $\bar{R}_{ijkl} = -R_{ijlk} = -\al (h\kwedge h)_{ijlk} = \al(h\kwedge h)_{ijkl} = R_{ijkl}$, so $(\nabla, h)$ has self-conjugate curvature. Tracing yields $R_{ij} = (1-n)\al h_{ij}$ and $\statscal = -n(n-1)\al$, so that $(\nabla, h)$ is Einstein. Consequently $-n(n-1)P_{ij} = \statscal h_{ij}$ and $B_{ijkl} = R_{ijkl} + 2h_{l[i}P_{j]k} = R_{ijkl} - \tfrac{2\statscal}{n(n-1)}h_{l[i}h_{j]k} = 0$. If $n > 2$ this shows $\nabla$ is projectively flat. If $n = 2$, then $2\nabla_{[i}P_{j]k} = -d\statscal_{[i}h_{j]k} = 0$ because $\al$ is constant by assumption. This shows the vanishing of the projective Cotton tensor and hence that $\nabla$ is projectively flat. The preceding shows \eqref{sss1} implies \eqref{sss2}. 

Let $(\nabla, h)$ be a projectively flat special statistical structure. That $\nabla$ be projectively flat means that $R_{ijk}\,^{l} = -2\delta_{[i}\,^{l}P_{j]k}$ where $(1-n)P_{ij} = R_{ij}$. Tracing this in $jk$ yields $(1-n)\bar{R}_{ij} = (1-n)R_{ip}\,^{p}\,_{j} = R_{ij} - \statscal h_{ij}$. If the curvature of $\nabla$ is self-conjugate it follows that $n\bar{R}_{ij} = nR_{ij} = \statscal  h_{ij}$, so that $(\nabla, h)$ is naive Einstein. If $n >2$ this implies $(\nabla, h)$ is Einstein by Lemma \ref{naivelemma}. If $n = 2$, that $(\nabla, h)$ be projectively flat means $0 = 2\nabla_{[i}P_{j]k} = -d\statscal_{[i}h_{j]k}$. Tracing this in $jk$ yields $d\statscal_{i} =0$, so that $\statscal$ is constant and $(\nabla, h)$ is Einstein. This shows that \eqref{sss2} implies \eqref{sss3}.

If $(\nabla, h)$ is projectively flat and Einstein, then, as in the preceding paragraph, $R_{ijkl} = \tfrac{2}{n-1}h_{l[i}R_{j]k}$. If $(\nabla, h)$ is naive Einstein, then this last expression equals $-\tfrac{\statscal}{n(n-1)}\statscal (h\kwedge h)_{ijkl}$. If $(\nabla, h)$ is moreover Einstein, then $\statscal$ is constant. This shows that \eqref{sss3} implies \eqref{sss1}.

For a special statistical structure, 
\begin{align}\label{stattproj}
B_{ijkl} = R_{ijkl} + 2h_{l[i}P_{j]k}=R_{ijkl} - \tfrac{2}{n-1}h_{l[i}R_{j]k}.
\end{align}
By \eqref{conjugatericcitrace} and \eqref{stattproj} applied to the conjugate special statistical structure,
\begin{align}\label{constattproj}
\bar{B}_{ijkl} = \bar{R}_{ijkl} - \tfrac{2}{n-1}h_{l[i} \bar{R}_{j]k} = -R_{ijlk} - \tfrac{2}{n-1}h_{l[i} \bar{R}_{j]k} = -B_{ijlk} - \tfrac{2}{n-1}(h_{k[i}R_{j]l}+h_{l[i} \bar{R}_{j]k}).
\end{align}
If there holds \eqref{sss3}, then the right-hand side of \eqref{constattproj} vanishes, so $\bar{B}_{ijkl}$ vanishes. If $n > 2$ this implies $(\nabla, h)$ is conjugate projectively flat. This shows \eqref{sss3} implies \eqref{sss4} if $n > 2$. If $(\nabla, h)$ is both projectively flat and conjugate projectively flat, then \eqref{constattproj} implies $h_{k[i}R_{j]l} + h_{l[i} \bar{R}_{j]k} = 0$. Tracing this in $il$ and in $jk$ and relabeling the results yields $(n-1)\bar{R}_{ij} + R_{ij} = \statscal h_{jk} = (n-1)R_{ij} + \bar{R}_{ij}$. If $n > 2$ this implies $\bar{R}_{ij} = R_{ij}$ and $nR_{ij} = \statscal h_{ij} = n\bar{R}_{ij}$, so that $(\nabla, h)$ is naive Einstein. In \eqref{stattproj} and \eqref{constattproj}, this shows $R_{ijkl} = -\tfrac{\statscal}{n(n-1)}(h\kwedge h)_{ijkl} = \bar{R}_{ijkl}$ and, because $n > 2$, by Lemma \ref{naivelemma}, this shows $\statscal$ is constant. This proves \eqref{sss4} implies \eqref{sss1} when $n > 2$. 
\end{proof}

\begin{definition}\label{constantstatisticaldefinition}
A special statistical structure has \emph{constant curvature} if it satisfies the equivalent conditions \eqref{sss1}-\eqref{sss3} of Lemma \ref{ssslemma}.
\end{definition}

Theorem \ref{ahstheorem} shows that for a special statistical structure $(\nabla, h)$ with cubic form $\bt$ and self-conjugate curvature the hierarchy of coupled equations for $(h, \bt)$ corresponds with a geometrical natural hierarchy of equations for $(\nabla, h)$.

\begin{theorem}\label{ahstheorem}
Let $(\nabla, h)$ be a special statistical structure with cubic form $\bt$ and scalar curvature $\statscal$ on a manifold of dimension $n \geq 2$.
\begin{enumerate}
\item\label{sse1} $(\nabla, h)$ has constant curvature if and only if $(h, \bt)$ solves the coupled projectively flat equations \eqref{projectivehiggsintro} with constants $c = \tfrac{1}{4}$ and $\ka = \statscal$;
\item\label{sse2} $(\nabla, h)$ has self-conjugate curvature and is Einstein if and only if $(h, \bt)$ solves the coupled Einstein equations \eqref{stressenergyintro} with constants $c = \tfrac{1}{4}$ and $\ka = \statscal$;
\item\label{sse3} $(\nabla, h)$ has self-conjugate curvature and constant scalar curvature if and only if $(h, \bt)$ solves the coupled constraint equations \eqref{constraintintro} with constants $c = \tfrac{1}{4}$ and $\ka = \statscal$.
\end{enumerate}
\end{theorem}

\begin{proof}
Because the cubic form of a special statistical structure is trace-free, by Lemma \ref{selfconjugatecurvaturelemma}, the cubic form of a special statistical structure is a trace-free Codazzi tensor if and only if the special statistical structure has self-conjugate curvature. 

By the preceding and \eqref{statt}-\eqref{stattscal}, for a special statistical structure with self-conjugate curvature:
\begin{align}\label{stattsse}
&\sR_{ijkl} - \tfrac{1}{4}(\bt \kwedge \bt) = R_{ijkl}, & &\sR_{ij} - \tfrac{1}{4}\rictr(\bt \kwedge \bt)_{ij} = R_{ij},& &\scal - \tfrac{1}{4}|\bt|^{2}= \statscal.
\end{align}
Claim \eqref{sse3} is immediate from \eqref{stattsse} and \eqref{constraintintro} and claim \eqref{sse2} is immediate from \eqref{stattsse} and \eqref{stressenergyintro} if one notes that for an Einstein special statistical structure having self-conjugate curvature $\statscal$ is constant by assumption.

By Lemma \ref{ssslemma}, if a special statistical structure having self-conjugate curvature is projectively flat, then its curvature satisfies $R_{ijkl} = -\tfrac{\statscal}{n(n-1)}\statscal (h\kwedge h)_{ijkl}$ with $\statscal$ constant. By Lemma \ref{ssslemma}, in this case $\statscal$ is constant, so with \eqref{stattsse} this shows $(h, \bt)$ solves \eqref{projectivehiggsintro} with $c = \tfrac{1}{2}$ and $\ka = \statscal$. Conversely, if $(h, \bt)$ solves \eqref{projectivehiggsintro} with $c = \tfrac{1}{2}$ and $\ka = \statscal$, then \eqref{stattsse} shows $R_{ijkl} = -\tfrac{\statscal}{n(n-1)}\statscal (h\kwedge h)_{ijkl}$, so by Lemma \ref{ssslemma}, $(\nabla, h)$ is projectively flat.
\end{proof}

The following paragraph summarizes the approach to the affine differential geometry of hypersurfaces  detailed in \cite{Fox-ahs, Fox-2dahs}.
An immersed hypersurface $M$ in $(n+1)$-dimensional flat affine space $\aff$ is nondegenerate if its second fundamental form is nondegenerate. Equivalently the conormal Gauss map $\cng:M \to \proj(\aff^{\ast})$ that associates with $p \in M$ the annihilator of the tangent space $T_{p}M$, viewed as an element of the projectivization $\proj(\aff^{\ast})$ of the vector space $\aff^{\ast}$ dual to $\aff$, is an immersion. In this case, the pullback via $\cng$ of the flat projective structure on $\proj(\aff^{\ast})$ induces on $M$ a flat projective structure $\ben$. A coorientation of $M$ means an orientation of its normal bundle and such identifies the second fundamental form of $M$ with a conformal structure $[h]$ on $M$. Any choice of vector field $W$ transverse to $M$ and consistent with the given coorientation determines a symmetric two tensor $h$ representing the second fundamental form with respect to the trivialization of the normal bundle determined by $W$, and $h$ is a metric by the assumed nondegeneracy of $M$. The metrics corresponding with different choices of cooriented $W$ are conformal. Each choice of $W$ determines a torsion-free affine connection $\nabla$ on $M$ by projecting the flat affine connection on $\aff$ along $W$. The resulting $\nabla$ is unchanged if $W$ is rescaled, but varies when the direction of $W$ is changed. The affine normal line bundle is the line field transverse to $M$ determined by the requirement that for any local section $W$ the induced $\nabla$ satisfying $h^{pq}\nabla_{i}h_{pq} = nh^{pq}\nabla_{p}h_{qi}$ with respect to any $h \in [h]$. If a parallel volume form $\Psi$ on $\aff$ is fixed, a unique cooriented transversal $W$ called the affine normal vector field is determined by requiring that it span the affine normal line bundle and that the volume densities on $M$ determined by $\imt(W)\Psi$ and $|\det h|^{1/2}$ agree, where $h \in [h]$ corresponds with $W$. The resulting $h$ is called the equiaffine or Blaschke metric of $M$. Because $\Psi$ is determined only up to multiplication by a constant by the requirement that it be parallel, $h$ is determined only up to positive homothety. Together $h$ and the connection $\nabla$ induced via the affine normal $W$ constitute a special statistical structure $(\nabla, h)$. The conjugate special statistical structure $(\bnabla, h)$ is such that $\bnabla$ represents the flat projective structure $\ben$ induced on $M$ via the conormal Gauss map.

In summary, a cooriented nondegenerate immersed hypersurface $M$ in flat affine space acquires a pair of conjugate special statistical structures one of which is necessarily projectively flat. 

The shape operator of a cooriented nondegenerate immersed hypersurface $M$ in flat affine space means the shape operator with respect to its affine normal $W$. Such a hypersurface is an \emph{affine sphere} if its affine normal line bundles all meet in a point (its center) or are all parallel (center at infinity). In the first case the affine sphere is \emph{proper}, in the second it is \emph{improper}. Equivalently, its affine shape operator is a (constant) multiple of the identity. It is an elementary matter to check that this is the case if and only if one of, and hence both, the induced special statistical structures on $M$ have constant curvature in the sense of Definition \ref{constantstatisticaldefinition}; in particular they determine a solution of the coupled projectively flat equations, as was observed with different terminology in \cite{Fox-ahs}; see also \cite{Fox-2dahs, Fox-schwarz}.  
	
These claims make sense whatever the signature of $[h]$, but more can be said when $[h]$ has definite signature. When $M$ is locally strictly convex so that $[h]$ has Riemannian signature, then a proper affine sphere is \emph{elliptic} or \emph{hyperbolic} as its center lies on the side to which $W$ points or on the side away from which $W$ points. In this setting improper affine spheres are called \emph{parabolic}. If the equiaffine metric of an elliptic affine sphere is complete, then it is the Fubini-Study metric on an ellipsoid \cite{Calabi}. If the equiaffine metric of a parabolic affine sphere is complete, then the affine sphere is an elliptic paraboloid and its equaffine metric is flat \cite{Jorgens, Calabi-improper}. If the equiaffine metric of a hyperbolic affine sphere is complete, then it has nonpositive Ricci curvature \cite[Theorem $5.1$]{Calabi-completeaffine}, and the cone over the boundary of the closure of its image in projective space is a pointed convex cone with vertex at the center of the affine sphere and to which the affine sphere is asymptotic, and all hyperbolic affine spheres with complete equiaffine metric arise in this way. These last affirmations summarize deep results due to Cheng and Yau, in particular in \cite{Cheng-Yau-affinehyperspheresI}; they are described in the form used here in the articles of J. Loftin \cite{Loftin-affine, Loftin-affinekahler, Loftin-survey} and further variants and discussion can be found in \cite{Fox-schwarz, Jia-Li-Simon-Xu, Klartag-elliptic}.

In general signature there are not available similarly general existence and characterization results but many examples are known. See for example \cite{Fox-prehom, Fox-afflsa, Hildebrand-analyticformulas,  Hildebrand-parallelcubicform}.

\begin{theorem}\label{properlyconvextheorem}
On an $n$-manifold $M$ that carries a properly convex flat real projective structure $\en$, there is a complete Riemannian metric $h$, unique up to homothety, that with the unique $\nabla \in \en$ satisfying $\nabla \det h =0$, constitutes a special statistical structure having constant sectional curvature in the sense of Definition \ref{constantstatisticaldefinition}. In particular, the pair $(h, \bt)$ determined by $D - \nabla = \tfrac{1}{2}\bt_{ijp}h^{kp}$, where $D$ is the Levi-Civita connection of $h$, solves the coupled projectively flat equations \eqref{projectivehiggsintro} with $c = \tfrac{1}{4}$ and a nonpositive constant $\ka$. Moreover, the metric $h$ has nonpositive Ricci curvature.
\end{theorem}

\begin{proof}
The following sketch reformulates work of J. Loftin summarized in \cite{Loftin-affine, Loftin-survey} based on earlier work of E. Calabi and S.~T. Yau and S.~Y. Cheng.
Let $\ste$ be an $n$-dimensional vector space. That $M$ be a properly convex flat real projective $n$-manifold means that it is the quotient of a properly convex domain $\Om$ (having nonempty interior) in flat projective space $\proj(\ste)$ by a free and properly discontinuous action of a group $\Ga \subset PGL(\ste)$ on $\Om$. That $\Om$ be properly convex means that it is convex and its closure is contained in some affine chart; in particular it contains no projective line. The cone over $\Om$ means the union $\hat{\Om}= \cup_{p \in \Om}\pi^{-1}(p)$ where $\pi:\ste \setminus \{0\} \to \proj(\ste)$ is the defining projection. The closure of either of the two connected components of $\hat{\Om}$ is a pointed convex cone in $\ste$ with vertex at the origin. By a theorem of S.~T. Yau and S.~Y. Cheng there is a unique foliation of its interior by complete hyperbolic affine spheres. Any one of these affine spheres is diffeomorphic to $\Om$ via $\pi$ and their equiaffine metrics with respect to a parallel volume are all positively homothetic, so they determine on $\Om$ a positive homothety class of complete Riemannian metrics. The connections determined on these affine spheres via their affine normals all descend to the same connection on $\Om$ and with any one of these metrics it forms a special statistical structure having constant sectional curvature. That these metrics have nonpositive Ricci curvature follows from \cite[Theorem $5.1$]{Calabi-completeaffine}
\end{proof}

\subsection{Solutions from invariant polynomials on compact simple Lie algebras}
There seems to be known no general existence result applicable to the coupled Einstein equations \eqref{stressenergyintro} for the case $k > 3$. Here there are given some specific examples based on algebraic constructions.

\begin{example}
Let $G$ be a connected compact simple Lie group of dimension greater than $3$ with Lie algebra $\g$, let $h = -B$ be the biinvariant metric determined by the negative of the Killing form of $\g$, and note that $h$ is Einstein with Ricci curvature $\tfrac{1}{4}h$.

Let $\om_{i_{1}\dots i_{k}} \in S^{k}(\g^{\ast})$ be the complete polarization of a homogeneous degree $k$ polynomial $P$ invariant under the adjoint action of $G$ on $\g$. More precisely, suppose that $P$ is one of a set of homogeneous generators of the ring of invariant polynomials on $\g$ and that $k \geq 3$. If $\g$ has rank $l$ then the ring of invariant polynomials on $\g$ is generated by $l$ algebraically independent homogeneous elements \cite[section VIII.$8$]{Bourbaki-lie}. The degrees $2 = u_{1} < \dots <  u_{l}$ of the homogeneous generators are given in terms of the exponents $m_{1}<  \dots < m_{l}$ of the Weyl group $W$ of $\g$ by $u_{i} = m_{i} + 1$, and satisfy $u_{1}\cdot\dots \cdot u_{l} = |W|$ and $2(u_{1} + \dots + u_{l}) = \dim \g + l$ (always, $u_{1} = 2$).
Since $G$ acts on $\g$ orthogonally, the $h$-Laplacian is invariant under the $G$-action, so $\lap_{h}P$ is again an invariant polynomial. Let $E$ be the invariant polynomial corresponding to $h$. The harmonic part $Q$ of $P$ is obtained by subtracting from $P$ a linear combination of terms of the form $E^{s}\lap_{h}^{s}P$ ($s > 0$). Since each of these terms is $G$-invariant, so is $Q$. Since the homogeneous generators of the ring of invariant polynomials are algebraically independent, it cannot be that $P$ is a linear combination of powers of $E$, so $Q$ is not null. It follows that it may be supposed from the beginning that $P$ is $\lap_{h}$-harmonic, or, equivalently, that $\om_{i_{1}\dots i_{k}}$ is trace-free. In particular, this shows that on $\g$ there exists a $\lap_{h}$-harmonic homogeneous $G$-invariant polynomial of degree at least $3$.

\begin{theorem}\label{stressenergyexampletheorem}
Let $G$ be a connected compact simple Lie group of dimension greater than $3$ with Lie algebra $\g$ and let $h = -B$ be the biinvariant metric on $G$ determined by the negative of the Killing form $B$ of $\g$. Suppose $k \geq 3$ and let $\om_{i_{1}\dots i_{k}} \in S^{k}(\g^{\ast})$ be the extension as a left-invariant tensor of the complete polarization of a $\lap_{h}$-harmonic homogeneous $G$-invariant polynomial $P$ of degree $k$.
The pair $(h, \om)$ solves the coupled Einstein equations \eqref{stressenergyintro} on $G$. 
\end{theorem}
\begin{proof}
Let $D$ be the Levi-Civita connection of $h = -B$ and let $c_{ij}\,^{k}$ be the structure tensor of the Lie algebra (so $x^{i}y^{j}c_{ij}\,^{k} = [x, y]^{k}$).
The invariance of $P$ means that $0 = kc_{i(i_{1}}\,^{p}\om_{i_{2}\dots i_{k})p} = -2D_{i}\om_{i_{1}\dots i_{k}}$, so that $\om$ is parallel, and so in the kernel of $\klie$. Let $\si_{ij} = \om_{ii_{1}\dots i_{k-1}}\om_{j}\,^{i_{1}\dots i_{k-1}}$. Then
\begin{align}
\begin{aligned}
c_{ij}\,^{p}\si_{pk} & = -c_{ji}\,^{p} \om_{pi_{1}\dots i_{k-1}}\om_{k}\,^{i_{1}\dots i_{k-1}} =  (k-1)c_{j(i_{1}}\,^{p}\om_{i_{2}\dots i_{k-1})pi}\om_{k}\,^{i_{1}\dots i_{k-1}}\\
& =  (k-1)c_{j(p}\,^{i_{1}}\om_{i_{2}\dots i_{k-1})ki_{1}}\om_{i}\,^{pi_{2}\dots i_{k-1}} 
= - c_{jk}\,^{i_{1}}\om_{i_{1}pi_{2}\dots i_{k-1}} \om_{i}\,^{pi_{2}\dots i_{k-1}} = -c_{jk}\,^{p}\si_{ip},
\end{aligned}
\end{align}
showing that $\si_{ij}$ is an invariant bilinear form. By the simplicity of $\g$ there is a constant $\ka$ such that $\si_{ij} = \ka h_{ij}$, and tracing this equality shows that $\ka = |\om|^{2}/\dim \g$. It follows that $(h, \om)$ solves the coupled Einstein equations \eqref{stressenergyintro}.
\end{proof}

One expects that in general, and in some cases it can be proved, that the solutions of the coupled Einstein equations \eqref{stressenergyintro} obtained via Theorem \ref{stressenergyexampletheorem} are not solutions of the coupled projectively flat equations \eqref{projectivehiggsintro}. The simplest case in which this is easily shown is the following with $G = SU(n)$ for $n > 2$. Identify the Lie algebra $\g = \mathfrak{su}(n)$ with the real vector space of $n\times n$ trace-free Hermitian matrices. The Killing form $B$ is $B(X, Y) = 2n \tr(XY)$ where $\tr$ is the usual matrix trace. Let $h$ be the biinvariant metric determined on $G$ by $-B$. The commutative product $X \mlt Y = \j(XY + YX - \tfrac{2}{n}\tr(XY)I)$ determines a completely symmetric trilinear form $\chi(X, Y, Z) = h(X\mlt Y, Z) = -2n\j\tr(XYZ + YXZ)$ with associated cubic form $P(X) = -4n\j \tr(X^{3})$ that is evidently invariant under the adjoint action of $G$. The reason for assuming $n > 2$ is that $P(X)$ vanishes identically when $n = 2$. On the other hand, when $n > 2$, $P$ is nontrivial.

By Theorem \ref{stressenergyexampletheorem}, for each $t \in \rea$ the pair $(h, 2t\chi)$ solves the coupled Einstein equations \eqref{stressenergyintro} on $G$. Define $L:\mathfrak{su}(n) \to \eno(\mathfrak{su}(n))$ by $L(X)Y = X\mlt Y$. The pair $(h, 2t\chi)$ determines the special statistical structure $(\nabla, h)$ with $\nabla^{t} = D - tL$ where $D$ is the Levi-Civita connection of $h$. Explicitly, 
\begin{align}\label{sunconnect}
\nabla^{t}_{X}Y = (\tfrac{1}{2}\ad(X) - tL(X))Y. 
\end{align}
The curvature $R^{t}(X, Y)Z$ of $\nabla^{t}$ can be computed using the invariance of $L$, $[\ad(X), L(Y)] = L([X, Y])$, and that 
\begin{align}
[L(X), L(Y)]Z = X\mlt (Y\mlt Z) -Y \mlt (X\mlt Z)  =  -\ad([X, Y])Z + \tfrac{2}{n^{2}}(h(X,Z)Y - h(Y, Z)X), 
\end{align}
and the result is
\begin{align}\label{suncurvature}
\begin{aligned}
R^{t}(X, Y)Z &= -\tfrac{1}{4}\ad([X, Y])Z + t^{2}[L(X), L(Y)]Z\\
& = -(\tfrac{1}{4} + t^{2})[[X, Y], Z] + \tfrac{2t^{2}}{n^{2}}( h(X, Z)Y - h(Y, Z)X).
\end{aligned}
\end{align}
so that 
\begin{align}
h(R^{t}(X, Y)Z, W) =  -(\tfrac{1}{4} + t^{2})h([X, Y], [Z, W])- \tfrac{2t^{2}}{n^{2}}(h(Y, Z)h(X,W) - h(X, Z)h(Y, W))
\end{align}
from which it is apparent that the statistical structure $(\nabla, h)$ does not have constant curvature, so, by Theorem \ref{ahstheorem}, $(h, 2t\chi)$ does not solve the coupled projectively flat equations \eqref{projectivehiggsintro}. The special statistical structures $(\nabla^{\pm t}, h)$ are conjugate and by \eqref{suncurvature} they have self-conjugate curvature.
Because
\begin{align}
R^{t}(\dum, X)Y = -(\tfrac{1}{4} + t^{2})\ad(Y)\ad(X)  + \tfrac{2t^{2}}{n^{2}}(X\tensor h(Y, \dum) - h(X, Y)\Id_{\mathfrak{su}(n)}),
\end{align}
the Ricci curvature of $\nabla^{t}$ is
\begin{align}
\ric^{t}(X, Y) = \tr R^{t}(\dum, X)Y = (\tfrac{1}{4} + (\tfrac{4}{n^{2}} - 1)t^{2})h(X, Y),
\end{align}
which is positive if $t^{2}< \tfrac{n^{2}}{4(n^{2} - 4)}$, zero when $t^{2} = \tfrac{n^{2}}{4(n^{2} - 4)}$, and negative otherwise. 

The one-parameter family of biinvariant affine connections $\nabla^{t}$ was found by T. Laquer who determined all biinvariant affine connections on compact Lie groups \cite{Laquer}. Laquer proved that the space of biinvariant affine connections on compact simple real Lie group $G$ with Lie algebra $\g$ is one-dimensional except when $\g = \mathfrak{su}(n)$ and $n \geq 3$, in which case there is a two-dimensional family of biinvariant connections (the case $\mathfrak{so}(6)$ is isomorphic to $\mathfrak{su}(4)$). Among these connections the torsion-free ones are exactly the connections $\nabla^{t}$. The geometric significance of these connections was not apparent from Laquer's results. The preceding results show that $(\nabla^{t}, h)$ is a one-parameter family of Riemannian signature Einstein special statistical structures having self-conjugate curvature but not having constant curvature (except when $t = 0$). Moreover they exist on a compact manifold. In particular, these examples are not locally equivalent to the special statistical structures induced on an affine hypersurface because they are neither projectively nor conjugate projectively flat.

\begin{corollary}\label{suncharcorollary}
Let $G$ be a connected compact simple Lie group with Lie algebra $\g$ of dimension at least $3$. Suppose that $\nabla$ is a left-invariant affine connection that together with the Riemannian metric $h=-B$ determined by the negative of the Killing form generates a left-invariant Einstein special statistical structure having self-conjugate curvature. Then either $\nabla$ is the Levi-Civita connection of $h$, or $\g$ has type $A_{l}$ for some $l \geq 2$ and $(\nabla, h)$ is locally isomorphic to one of the one-parameter family of biinvariant Einstein special statistical structures $(\nabla^{t}, h)$ on $SU(n)$ defined by \eqref{sunconnect}.
\end{corollary}

In interpreting the conclusion of Corollary \ref{suncharcorollary}, recall that Dynkin diagrams of types $A_{3}$ and $D_{3}$ coincide, corresponding to the fact that $SU(4)$ is the connected double cover of $SO(6)$. 

\begin{proof}
Let $\chi \in S^{3}(\g^{\ast})$ be the cubic torsion of a left-invariant Einstein special statistical structure having self-conjugate curvature, $(\nabla, h)$. By Theorem \ref{ahstheorem}, $(h, \chi)$ solves the coupled Einstein equations \eqref{stressenergyintro}, so $\div(\chi) = 0$ and $\cod(\chi) = 0$. On the other hand, $2D_{i}\chi_{jkl} = c_{i(j}^{p}\chi_{kl)p}$, so $D_{(i}\chi_{jkl)} = 0$. Hence $ c_{i(j}^{p}\chi_{kl)p} = 2D_{i}\chi_{jkl} = 0$, which shows the invariance of the cubic polynomial $P$ on $\g$ whose complete polarization is $\chi_{ijk}$. The only cases in which there is an invariant homogeneous polynomial of degree $u_{2} = 3$ are when $\g$ has type $A_{l}$ and $l \geq 2$ and when $\g$ has type $D_{3}$ (which is redundant since $D_{3} = A_{3}$). Note that it follows as well that in the cases where there is a nonzero invariant cubic polynomial on $\g$ it is unique up to scale. It also must be trace-free, for its trace corresponds to the $h$-Laplacian of $P$; since $G$ acts on $\g$ orthogonally and the Laplacian is orthogonally invariant, $\lap_{h}P$ is invariant, but there are no nonzero invariant homogeneous linear forms on $\g$, so $\lap_{h}P = 0$. Similarly, the invariance of $\chi_{ijk}$ implies the invariance of $\chi_{ip}\,^{q}\chi_{jq}\,^{p}$, and the simplicity of $\g$ then forces that $\chi_{ip}\,^{q}\chi_{jq}\,^{p}$ is a multiple of $h_{ij}$. That $(\nabla, h$ must be locally isomorphic to one of the examples on $SU(n)$ defined by \eqref{sunconnect} follows from the uniqueness up to scale of the invariant cubic polynomial $P$. 
\end{proof}

In \cite{Laquer-leftinvariant}, Laquer determined the space of left-invariant affine connections on a compact irreducible Riemannian symmetric space. Examples like that just described occur for the spaces $SU(n)/SO(n)$, $(SU(n)\times SU(n))/SU(n)$, $SU(2n)/Sp(n)$, all with $n \geq 3$, and $E_{6}/F_{4}$. The case $(SU(n)\times SU(n))/SU(n)$ is that already described. The connections are given essentially as in the preceding example, with the Levi-Civita connection of the underlying Riemannian symmetric space perturbed by adding to it a multiple of a commutative product that can be described as the trace-free Jordan part of the Jordan product on the algebra of $n \times n$ Hermitian matrices over one of the real Hurwitz algebras, $\rea$, $\com$, $\quat$, or $\cayley$ (in the last case, of the octonions, $n = 3$). These yield further examples of compact Riemannian signature solutions of the coupled Einstein equations that are not coupled projectively flat. These same connections arise in the classification of totally real parallel submanifolds in complex projective space obtained by H. Naitoh \cite{Naitoh}; in that context they arise as in Example \ref{kahlerexample}. An argument like that proving Corollary \ref{suncharcorollary} will show that these are essentially the unique left-invariant examples. The essence of the proof of this claim is contained in the proof of \cite[Theorem $4.5$]{Naitoh}. Because the details are formally quite similar to those given above, but writing them would take considerable space, they are omitted. 
\end{example}

\subsection{Solutions of other algebraic origins}
When the metric $h$ is flat, the coupled Einstein equations \eqref{stressenergyintro} admit purely algebraic solutions. Inwhat follows, $h$ is a flat Riemannian metric on an $n$-dimensional vector space $\ste$, although many of the claims make sense in other signatures. Let $x^{i}$ denote the radial Euler vector field. Let $E(x) = |x|^{2}$. Let $\lap = \lap_{h}$. If $F(x)$ is a function on $\ste$, let $F_{i_{1}\dots i_{k}} = D_{i_{1}}\dots D_{i_{k}}F$. Sometimes it is convenient to write $D^{(k)}F$ for the $k$-fold covariant derivative of $F$ (for example $D^{(2)}F$ is the Hessian of $F$). If $F$ is a polynomial homogeneous of degree $g$ then $x^{i_{1}}\dots x^{i_{j}}F_{i_{1}\dots i_{j}} = g(g-1)\dots(g-j+1)F$. In this case $F_{i_{1}\dots i_{g}}$ is a constant tensor, so parallel.

\begin{lemma}\label{epequivalencelemma}
Let $F$ be an $h$-harmonic polynomial homogeneous of degree $g\geq 2$ on the $n$-dimensional Euclidean vector space $(\ste, h)$. Let $\om_{i_{1}\dots i_{g}} = F_{i_{1}\dots i_{g}}$. The pair $(h, \om)$ solves the coupled Einstein equations \eqref{stressenergyintro} if and only if there is $c \in \rea$ such that $F$ solves any one of the following equivalent equations:
\begin{align}\label{sepoly}
&0   = F_{ip_{1}\dots p_{g-1}}F_{j}\,^{p_{1}\dots p_{g-1}} - ch_{ij},&
&0  = D_{i}D_{j}\left(|D^{(g-1)}F|^{2} - cE\right),&
&0  = |D^{(g-1)}F|^{2} - cE.
\end{align}
In this case $nc = |D^{(g)}F|^{2}$.
\end{lemma}
\begin{proof}
The equivalence of the first two equations of \eqref{sepoly} follows from the vanishing of $D^{(g+1)}F$. Because $|D^{(g-1)}F|^{2} - cE$ is a homogeneous quadratic polynomial, its Hessian vanishes if and only if it vanishes identically, so these equations are equivalent to the last equation of \eqref{sepoly}.
\end{proof}

\begin{example}
A hypersurface in a Riemannian space form is \emph{isoparametric} if its principal curvatures are constant. The question of classifying isoparametric hypersurfaces was posed and partially solved by E. Cartan in \cite{Cartan-cubic, Cartan-isoparametric, Cartan-isoparametricconstantcurvature}. See \cite{Chi,Siffert-isoparametric,Thorbergsson} for background. In \cite{Munzner-I, Munzner-II} it is shown that for an isoparametric hypersurface in a constant curvature $(n-1)$-dimensional sphere $\sphere^{n-1} = \{x \in \ste: E(x) = 1\}$:
\begin{itemize}
\item the number $g$ of distinct principal curvatures satisfies $g \in \{1, 2, 3, 4, 6\}$;
\item if the distinct principal curvatures are ordered $\la_{1} > \la_{2} > \dots > \la_{g}$, the multiplicities, $m_{i}$, of the $\la_{i}$ satisfy $m_{i} = m_{i+2}$ (indices modulo $6$), so that there are at most two distinct multiplicities $m_{1}$ and $m_{2}$ (moreover, if $g < 4$ then $m_{1} = m_{2}$ always); and 
\item every such hypersurface arises as a level set of the restriction to the sphere of a polynomial $P:\ste \to \rea$ homogeneous of degree $g$ and satisfying the equations
\begin{align}\label{munznerequations}
&|dP|^{2} = g^{2}E^{g - 1},& &\lap P = \tfrac{m_{2} - m_{1}}{2}g^{2}E^{\tfrac{g}{2} - 1}.
\end{align}
\end{itemize}
A polynomial $P$ solving \eqref{munznerequations} is called a \emph{Cartan-Münzner polynomial}. Examples of solutions for which the resulting hypersurfaces are not extrinsically homogeneous are known when $g = 4$; see for example \cite{Ferus-Karcher-Munzner}. 

\begin{theorem}\label{isoparametrictheorem}
Let $P$ be a Cartan-Münzner polynomial homogeneous of degree $g\geq 2$ and having multiplicities $m_{1}$ and $m_{2}$ on the $n$-dimensional Euclidean vector space $(\ste, h)$. Then the trace-free part $\om_{i_{1}\dots i_{g}}$ of $P_{i_{1}\dots i_{g}}$ solves \eqref{sepoly}, so the pair $(h, \om)$ solves the coupled Einstein equations \eqref{stressenergyintro}.
\end{theorem}

\begin{proof}
It suffices to show that $P$ solves an equation of the form \eqref{sepoly}. The following identity is needed. For any polynomial $F$ homogeneous of degree $g$ there holds
\begin{align}
\begin{aligned}
\label{lapeip}
\lap(E^{i}F) & = 2i(n + 2(g + i - 1))E^{i-1}F + E^{i}\lap F.
\end{aligned}
\end{align}
In particular, the special case $f = 1$ yields $\lap E^{i} = 2i(n+2(i-1))E^{i-1}$.

In the case $m_{1} = m_{2}$, the polynomial $P$ is harmonic and so $\om_{i_{1}\dots i_{g}} = P_{i_{1}\dots i_{g}}$. In this case applying $\lap^{g-2}$ to the first equation of \eqref{munznerequations} and simplifying the result using \eqref{lapeip} yields
\begin{align}
2^{g-2}g(g!)(n+2(g-2))\dots (n+ 2)E = \lap^{g-2}(g^{2}E^{g-1}) = \lap^{g-2}|dP|^{2} = 2^{g-2}|D^{(g-1)}P|^{2}.
\end{align}
Differentiating this yields
\begin{align}
D_{i}D_{j}|D^{(g-1)}P|^{2} = 2g(g!)(n+2(g-2))\dots (n+ 2) h_{ij},
\end{align}
which suffices to show that $P$ solves \eqref{sepoly}. The argument in the general case is similar, but more involved. Since $m_{1} = m_{2}$, if $g < 4$, it can be supposed that $g \in \{4, 6\}$. In particular, $g$ is even. Let $P = \sum_{i = 0}^{g/2}E^{i}Q^{(g-2i)}$ be the Lefschetz decomposition of $P$ into its harmonic components. Here $Q^{(g-2i)}$ is a harmonic polynomial homogeneous of degree $g - 2i$, and the decomposition is uniquely determined. Applying $\lap$ to both sides and using \eqref{lapeip} yields
\begin{align}
\tfrac{m_{2} - m_{1}}{2}g^{2}E^{\tfrac{g}{2} - 1} = \lap P = \sum_{i = 1}^{g/2}2i(n+2(g - i -1))E^{i-1}Q^{(g-2i)}.
\end{align}
By the uniqueness of the Lefschetz decomposition, this implies $Q^{(g-2i)} = 0$ if $0 < i < g/2$. Hence
\begin{align}\label{pqred1}
P = Q + \tfrac{(m_{2} - m_{1})g}{2(n+g - 2)}E^{g/2}
\end{align}
where $Q$ is a harmonic polynomial homogeneous of degree $g$. Note that the desired tensor $\om_{i_{1}\dots i_{g}}$ equals $Q_{i_{1}\dots, i_{g}}$. Calculating the differential of \eqref{pqred1} using the homogeneity of $Q$ yields
\begin{align}
g^{2}E^{g-1} = |dP|^{2} = |dQ|^{2} + \tfrac{(m_{2} - m_{1})g^{3}}{(n+g - 2)}E^{g/2 -1}Q + \tfrac{(m_{2} - m_{1})^{2}g^{2}}{(n+g - 2)^{2}}E^{g-1},
\end{align}
so that
\begin{align}\label{pqred2}
0 =  |dQ|^{2} + g^{3}\tfrac{(m_{2} - m_{1})}{(n+g - 2)}E^{g/2 -1}Q + g^{2}\left(\tfrac{(m_{2} - m_{1})^{2}}{(n+g - 2)^{2}} - 1\right)E^{g-1}.
\end{align}
Applying $\lap^{g-2}$ to both sides of \eqref{pqred2} and simplifying using \eqref{lapeip} yields
\begin{align}
0 = 2^{g-2}\left(|D^{(g-1)}Q|^{2} + \left(\tfrac{(m_{2} - m_{1})^{2}}{(n+g - 2)^{2}} - 1\right)g(g!)(n+2(g-2))(n+2(g-3))\dots (n+2)E\right).
\end{align}
Hence
\begin{align}
Q_{ip_{1}\dots p_{g-1}}Q_{j}\,^{p_{1}\dots p_{g-1}} =  \left(1 - \tfrac{(m_{2} - m_{1})^{2}}{(n+g - 2)^{2}}\right)g(g!)(n+2(g-2))(n+2(g-3))\dots (n+2)h_{ij}.
\end{align}
Because $\om_{i_{1}\dots i_{g}}$ is parallel, this suffices to prove the claim.
\end{proof}
\end{example}

\begin{example}
Let $E = \{e_{1}, \dots, e_{n}\}$ be an enumeration of the edge set of a finite $k$-regular graph with vertex set $V$. The partial Steiner system $\B$ determined by the incidence of edges in the given graph is the collection of $k$-element subsets (blocks) of $\bar{n} = \{1, \dots, n\}$ such that $I = \{i_{1}, \dots, i_{k}\} \in \B$ if and only if the edges $e_{i_{1}}, \dots, e_{i_{k}}$ are incident at some vertex in $V$. Let $\ste$ be the $n$-dimensional real vector space generated by $E$ and equip $\ste$ with the flat Riemannian metric $h$ with respect to which $E$ is an ordered orthonormal basis. Let $x_{1}, \dots, x_{n}$ be the coordinates of $x \in \ste$ with respect to the ordered basis $E$. The quadratic form $Q(x)$ determined by $h$ is $Q(x) = \sum_{i = 1}^{n}x_{i}^{2}$. Associate with $I \in \B$ the monomial $x_{I}  = x_{i_{1}}\dots x_{i_{k}}$. Let $\ep \in \{\pm 1\}^{\B}$, so that $\ep_{I} \in \{\pm 1\}$ for each $I \in \B$.

\begin{theorem}\label{graphpolynomialtheorem}
Let $\B$ be the partial Steiner system determined by the incidence of edges in a finite $k$-regular graph. For the homogeneity $k$ polynomial $P(x) = \sum_{I \in \B}\ep_{I}x_{I}$
associated with the $k$-regular graph and any choice of signs $\ep \in \{\pm 1\}^{\B}$, the pair $(h, D^{(k)}P)$ solves the coupled Einstein equation \eqref{stressenergyintro}.
\end{theorem}
\begin{proof}
The component $\tfrac{\pr^{k-1}P}{\pr x_{i_{1}}\dots \pr x_{i_{k-1}}}$ is nonzero if and only if $\{i_{1}, \dots, i_{k-1}\}$ is contained in some block $I = \{i_{1}, \dots, i_{k}\}$ in $\B$ (because any $k-1$ edges meet at most one vertex there is at most one such block). In this case $\tfrac{\pr^{k-1}P}{\pr x_{i_{1}}\dots \pr x_{i_{k-1}}} = \ep_{I}x_{i_{k}}$. Since there are $(k-1)!$ orderings of the distinct indices $i_{1}, \dots, i_{k-1}$ and since the index of a given edge appears in exactly two blocks there results $|D^{(k-1)}P|^{2} = 2(k-1)!Q$. Because no variable $x_{i}$ appears in any monomial of $P$ with a power higher than one, $P$ is $h$-harmonic. By Lemma \ref{epequivalencelemma} this shows that the pair $(h, D^{(k)}P)$ solves \eqref{stressenergyintro}.
\end{proof}
\end{example}

\section{Properties of solutions}\label{propertiessection}
This section describes qualitative properties of solutions to the coupled hierarchies for completely symmetric tensors. Section \ref{constraintsection} states the main theorems to be proved, Theorems \ref{scalarcurvaturetheorem} and \ref{simonstheorem}. The preliminary section \ref{curvatureoperatorsection} gives some definitions needed in their formulations and proofs. The subsequent sections give the technical material needed for their proofs: Section \ref{weitzenbocksection} reviews Weitzenböck formulas; Section \ref{katosection} presents refined Kato inequalities; Section \ref{boundsection} describes bounds on quantities such as $|\om \kwedge \om|^{2}$; and Section \ref{proofsection} assembles this material to give the proofs of Theorems \ref{scalarcurvaturetheorem} and \ref{simonstheorem}.

\subsection{Metric curvature tensors as operators}\label{curvatureoperatorsection}
Any $\sY \in \mcurv(\std)$ determines a self-adjoint endomorphism of $S^{2}\std$ defined by $a_{ij} \in S^{2}\std \to \sY_{ipjq}a^{pq} \in S^{2}\std$, but this endomorphism need not preserve the subspace $S^{2}_{0}\std$. The modified endomorphism $a_{ij} \to a^{pq}(\sY_{ipjq} + \rictr(\sY)_{p(i}h_{j)q})$ restricts to a self-adjoint endomorphism of $S^{2}_{0}\std$. This is the $k = 2$ special case of the following construction that goes back to A. Lichnerowicz \cite[section $10$]{Lichnerowicz-propagateurs}. 

\begin{lemma}\label{hycommutelemma}
For $\sY \in \mcurv(\std)$, the degree $0$ graded linear operator $\op{\sY}: S(\std) \to S(\std)$ defined by
\begin{align}\label{syom}
&\op{\sY}(\om)_{i_{1}\dots i_{k}} = k\rictr(\sY)_{p(i_{1}}\om_{i_{2}\dots i_{k})}\,^{p} - k(k-1)\sY_{p(i_{1}i_{2}}\,^{q}\om_{i_{3}\dots i_{k})q}\,^{p}, && \om \in S^{k}(\std),
\end{align}
commutes with the $\mathfrak{sl}(2, \rea)$- triple $\{\sraise, \slower, \sdeg\}$. In particular $\op{\sY}(h^{\sprod k}) = 0$ for all $k \geq 1$, and $[\op{\sY}, \tf] = 0$, so $\op{\sY}$ restricts to an endomorphism of $S_{0}(\std)$.
\end{lemma}

\begin{proof}
It is claimed that
\begin{align}\label{syomint}
\begin{aligned}
\rictr(\sY)_{p(i_{1}}\sraise(\om)_{i_{2}\dots i_{k+2})}\,^{p} & = \tbinom{k+1}{2}h_{(i_{1}i_{2}}\rictr(\sY)^{p}\,_{i_{3}}\om_{i_{4}\dots i_{k+2})p} + (k+1)\rictr(\sY)_{(i_{1}i_{2}}\om_{i_{3}\dots i_{k+2})},\\
\sY^{p}\,_{(i_{1}i_{2}}\,^{q}\sraise(\om)_{i_{3}\dots i_{k+2})pq} & = \tbinom{k}{2} h_{(i_{1}i_{2}}\sY^{p}\,_{i_{3}i_{4}}\,^{q}\om_{i_{5}\dots i_{k+2})pq} + \rictr(\sY)_{(i_{1}i_{2}}\om_{i_{3}\dots i_{k+2})}.
\end{aligned}
\end{align}
Combining \eqref{syomint} using \eqref{syom} yields $[\sraise, \op{\sY}] = 0$. The validity of \eqref{syomint} is shown as follows. Write
\begin{align}\label{syomint0}
\begin{aligned}
\sraise(\om)_{i_{1}\dots i_{k+1}p}& = h_{pi_{k+1}}\om_{i_{3}\dots i_{k}} + kh_{p(i_{1}}\om_{i_{2}\dots i_{k})i_{k+1}}+  kh_{i_{k+1}(i_{1}}\om_{i_{2}\dots i_{k})p} + \tbinom{k}{2}h_{(i_{1}i_{2}}\om_{i_{3}\dots i_{k})i_{k+1}p}.
\end{aligned}
\end{align}
Contracting \eqref{syomint0} with $\rictr(\sY)_{i_{k+2}}\,^{p}$ yields
\begin{align}\label{syomint1}
\begin{aligned}
\rictr(\sY)_{i_{k+2}}\,^{p}\sraise(\om)_{i_{1}\dots i_{k+1}p} &= \rictr(\sY)_{i_{k+1}i_{k+2}}\om_{i_{3}\dots i_{k}} + k\rictr(\sY)_{i_{k+2}(i_{1}}\om_{i_{2}\dots i_{k})i_{k+1}} \\&+  k\rictr(\sY)_{i_{k+2}}\,^{p}h_{i_{k+1}(i_{1}}\om_{i_{2}\dots i_{k})p} + \tbinom{k}{2}\rictr(\sY)_{i_{k+2}}\,^{p}h_{(i_{1}i_{2}}\om_{i_{3}\dots i_{k})i_{k+1}p}.
\end{aligned}
\end{align}
Symmetrizing \eqref{syomint1} over the uncontracted indices yields the first identity in \eqref{syomint}. Relabeling $i_{k+1}$ as $q$ in \eqref{syomint0} and contracting the result with $\sY^{p}\,_{i_{k+1}i_{k+2}}\,^{q}$ yields
\begin{align}\label{syomint2}
\begin{aligned}
&\sY^{p}\,_{i_{k+1}i_{k+2}}\,^{q}\sraise(\om)_{i_{1}\dots i_{k}pq} \\
&=\sY^{p}\,_{i_{k+1}i_{k+2}}\,^{q}\left(h_{pq}\om_{i_{3}\dots i_{k}} + kh_{p(i_{1}}\om_{i_{2}\dots i_{k})q} + kh_{q(i_{1}}\om_{i_{2}\dots i_{k})p} + \tbinom{k}{2}h_{(i_{1}i_{2}}\om_{i_{3}\dots i_{k})pq}\right)\\
& = \rictr(\sY)_{i_{k+1}i_{k+2}}\om_{i_{1} \dots i_{k}} - k\sY_{i_{k+2}}\,^{q}\,_{i_{k+1}(i_{1}}\om_{i_{2}\dots i_{k})q}\\
&\quad - k\sY_{i_{k+1}}\,^{q}\,_{i_{k+2}(i_{1}}\om_{i_{2}\dots i_{k})q}
+ \tbinom{k}{2}\sY^{p}\,_{i_{k+1}i_{k+2}}\,^{q}h_{(i_{1}i_{2}}\om_{i_{3}\dots i_{k})pq}
\end{aligned}
\end{align}
Symmetrizing \eqref{syomint2} over the uncontracted indices yields the second identity in \eqref{syomint}. Similarly, 
\begin{align}\label{ytrcom}
\begin{aligned}
&\slower( \op{\sY}(\om))_{i_{1}\dots i_{k-2}} = -\tfrac{1}{2}h^{i_{k-1}i_{k-2}}\left(k\rictr(\sY)_{p(i_{1}}\om_{i_{1}\dots i_{k})}\,^{p} + k(1-k)\sY_{p(i_{1}i_{2}}\,^{q}\om_{i_{3}\dots i_{k})q}\,^{p} \right)\\
& = -\rictr(\sY)^{pq}\om_{i_{1}\dots i_{k-2}pq}+  (k-2)\rictr(\sY)_{p(i_{1}}\slower(\om)_{i_{2}\dots i_{k-2})}\,^{p} \\
&\quad + \rictr(\sY)_{p}\,^{q}\om_{i_{1}\dots i_{k-2}q}\,^{p} -2 \tbinom{k-2}{2}\sY_{p(i_{1}i_{2}}\,^{q}\slower(\om)_{i_{3}\dots i_{k-2})q}\,^{p} \\
& = (k-2)\left(\rictr(\sY)_{p(i_{1}}\slower(\om)_{i_{2}\dots i_{k-2})}\,^{p} + (3-k)\sY_{p(i_{1}i_{2}}\,^{q}\slower(\om)_{i_{3}\dots i_{k-2})q}\,^{p}\right) 
= \op{\sY}(\slower(\om))_{i_{1}\dots i_{k-2}},
\end{aligned}
\end{align}
which shows $[\slower, \op{\sY}] = 0$.
That $\op{\sY}(h^{\sprod k}) = 0$ follows from $\op{\sY}(h) = 0$ and \eqref{sl2ops} by induction. 
Because $\op{\sY}$ commutes with the $\mathfrak{sl}(2, \rea)$-triple $\{\sraise, \slower, \sdeg\}$, it preserves the decomposition of symmetric tensors into their primitive parts. In particular, it commutes with $\tf$.
\end{proof}

It follows from \eqref{syom} and \eqref{rictralbe} that for $\al, \be \in  S^{k}\std$ and $\sY \in \mcurv(\std)$,
\begin{align}\label{qyalbe}
\begin{aligned}
\lb \op{\sY}(\al), \be\ra &= \tbinom{k}{2}\lb \al \kwedge \be, \sY\ra + k\lb \rictr(\al \kwedge \be), \rictr(\sY)\ra -k\left(\lb \imt(\rictr(\sY))\al, \slower(\be)\ra + \lb \slower(\al), \imt(\rictr(\sY))\be\ra \right).
\end{aligned}
\end{align}
If $\al, \be \in  S^{k}_{0}\std$ then the last two terms in \eqref{qyalbe} vanish.

Because the operator $\op{\sY}$ associated with $\sY \in \mcurv(\std)$ is self-adjoint it determines on any $\op{\sY}$-invariant subspace $E \subset \tensor^{k}\std$ a quadratic form defined by $\qY(\om) = \lb \om, \op{\sY}(\om)\ra$ for $\om \in E$. Taking $\be = \al = \om \in S^{k}_{0}\std$ in \eqref{qyalbe} yields $\qY( \om) = k\lb \rictr(\om \kwedge \om), \rictr(\sY)\ra + \tbinom{k}{2}\lb \om \kwedge \om, \sY\ra$. 

\begin{example}
The metric curvature tensor $\sH = - \tfrac{1}{n(n-1)}h \kwedge h$ determines an operator $\op{\sH}$ and an associated quadratic form $\qH$. The coefficient is chosen so that $\rictr(\sH) = \tfrac{1}{n}h$ and $\scal(\sH) = 1$. That a metric $h$ have constant sectional curvature $\scal$ is equivalent to the statement $\riem = \scal \sH$. By \eqref{qyalbe} there hold
\begin{align}\label{qhom}
&\op{\sH}(\om)= \tfrac{k(n+k-2)}{n(n-1)}\om, &&\qH(\om) =  \tfrac{k(n+k-2)}{n(n-1)}|\om|^{2}, && \om \in \Ga(\symkt).
\end{align}
In particular, $\qH$ is positive definite on $S^{k}_{0}\std$. Note that $\sH$ is \textit{negative} definite when viewed as an endomorphism of $\ext^{2}\std$. 
\end{example}

\subsection{Constraints on solutions: Calabi-Cheng-Yau growth estimates and Simons style pinching}\label{constraintsection}
This section describes a priori constraints on the growth of solutions of the coupled projectively flat equations \eqref{projectivehiggsintro} when $h$ is Riemannian. It would be interesting to extend, even partially, such constraints to solutions of the coupled Einstein equations \eqref{stressenergyintro}.

\begin{theorem}\label{scalarcurvaturetheorem}
Let $M$ be a manifold of dimension $n \geq 3$. Suppose $h$ is a complete Riemannian metric which with a trace-free Codazzi tensor $\om$ solves the coupled projectively flat equations \eqref{projectivehiggsintro} for $c> 0$ and $\ka \in \rea$.
\begin{enumerate}
\item If $\ka \geq 0$ then $\om$ is identically zero, and $h$ is a metric of constant sectional curvature.
\item If $\ka < 0$ then 
\begin{itemize}
\item If $\ka < 0$ and $k \geq 3$, then $\sup_{M}|\om|^{2} \leq -\ka/c$, so the scalar curvature $\scal = c|\om|^{2} + \ka$ of $h$ is nonpositive.
\item If $\ka < 0$ and $k = 2$, then $\sup_{M}|\om|^{2} \leq -\tfrac{\ka}{c(n-1)}$, so the scalar curvature $\scal$ of $h$ satisfies $\scal = c|\om|^{2} + \ka \leq \tfrac{n-2}{n-1}\ka$ (which is strictly negative if $n > 2$).
\end{itemize}
\end{enumerate}
\end{theorem}

\begin{remark}
In the special case corresponding to the context of the cubic form of a complete hyperbolic affine sphere, Calabi \cite{Calabi-completeaffine} showed the nonpositivity of the Ricci curvature. It seems plausible that, perhaps with some additional conditions, the nonpositivity of the scalar curvature in Theorem \ref{scalarcurvaturetheorem} can be improved to nonpositivity of the Ricci curvature. The remaining technical issue is that certain tensorial identities used in Calabi's argument for $k = 3$ are special to that case, so some new ingredient is needed to extend these results to $k > 3$.
\end{remark}

The proof of Theorem \ref{scalarcurvaturetheorem} requires the Weitzenböck formulas described in Section \ref{weitzenbocksection}, the refined Kato inequalities described in Section \ref{katosection}, and a theorem of Cheng-Yau on the growth of solutions to a differential inequality of the form $\lap u \geq Bu^{1+\si} - Au$ recalled as Theorem \ref{cyestimatetheorem}.

These results can be understood as generalizations of results for holomorphic tensors on surfaces and as a counterparts to classical vanishing theorems for holomorphic symmetric tensors due to Kobayashi \cite{Kobayashi-holomorphicsymmetric, Kobayashi-holomorphictensor}.

For $\om \in S^{k}_{0}\std$ there holds 
\begin{align}\label{quadom}
\tfrac{1}{k}\lb \op{\om \kwedge \om}(\om), \om \ra = \tfrac{k-1}{2}|\om \kwedge \om|^{2} + |\rictr(\om \kwedge \om)|^{2}
\end{align} 
by \eqref{qyalbe}. The proofs of Theorem \ref{scalarcurvaturetheorem} and Corollary \ref{simonstheorem} below depend on estimating \eqref{quadom} from above and below. Because the unit sphere in $S^{k}_{0}\std$ is compact, the expression \eqref{quadom} assumes its maximum and minimum values on the sphere, so there are constants $\skmax_{n, k}, \skmin_{n, k} \in \rea$ defined by
\begin{align}\label{skdefined}
\begin{aligned}
\skmax_{n, k} &=\max\{\tfrac{k-1}{2}|\om \kwedge \om|^{2} + |\rictr(\om \kwedge \om)|^{2}: \om \in S^{k}_{0}\std, |\om|^{2} = 1\},\\
\skmin_{n, k} &=\min\{\tfrac{k-1}{2}|\om \kwedge \om|^{2} + |\rictr(\om \kwedge \om)|^{2}: \om \in S^{k}_{0}\std, |\om|^{2} = 1\},
\end{aligned}
\end{align} 
and for all $\om \in S^{k}_{0}\std$, there hold
\begin{align}\label{skineq}
\skmin_{n, k}|\om|^{4} \leq \tfrac{1}{k}\lb \op{\om \kwedge \om}(\om), \om \ra = \tfrac{k-1}{2}|\om \kwedge \om|^{2} + |\rictr(\om \kwedge \om)|^{2} \leq \skmax_{n, k}|\om|^{4}.
\end{align}
Note that $\skmax_{n, k}$ and $\skmin_{n, k}$ depend only on the homothety class of the chosen Euclidean metric. A basic technical issue is to obtain the values of $\skmax_{n, k}$ and $\skmin_{n, k}$ and to characterize the $\om$ for which there holds either of the equalities in \eqref{skineq}. In Section \ref{boundsection} it will be apparent that this is easier for $\skmin_{n, k}$ than for $\skmax_{n, k}$. 

Let $E$ and $F$ be bundles of tensors on $M$. A metric $h$ on $M$ determines a pairing $\ilp \al, \be \irp = \int_{M}\lb \al, \be \ra \,d\vol_{h}$ of sections $\al, \be \in \Ga(E)$, at least one of which is compactly supported. Write $\iln \om \irn^{2} = (\om, \om)$. 

Theorem \ref{simonstheorem} is the integral bound parallel to that of Theorem \ref{scalarcurvaturetheorem} for solutions of the coupled projectively flat theorem \eqref{projectivehiggsintro} for negative $c$.
\begin{theorem}\label{simonstheorem}
Let $M$ be a compact oriented manifold of dimension $n \geq 3$. Suppose the Riemannian metric $h$ and the trace-free Codazzi tensor $\om$ solve the coupled projectively flat equations \eqref{projectivehiggsintro} for $c <  0$ and $\ka \in \rea$. Then
\begin{align}\label{general simonsinequality}
&0 \geq \int_{M}|\om|^{2}\left(\tfrac{n+k-2}{n(n-1)}\ka + c\skmax_{n, k})|\om|^{2}\right)d\vol_{h}.
\end{align}
\end{theorem}

Estimates of $\skmax_{n,k}$ obtained in Section \ref{boundsection} yield Corollary \ref{simonscorollary}.

\begin{corollary}\label{simonscorollary}
Let $M$ be a compact oriented manifold of dimension $n \geq 3$. Suppose the Riemannian metric $h$ and the trace-free Codazzi tensor $\om$ solve the coupled projectively flat equations \eqref{projectivehiggsintro} for $c <  0$ and $\ka \in \rea$. Then
\begin{align}\label{simonsinequality}
&0 \geq \int_{M}|\om|^{2}\left(\tfrac{n+k-2}{n(n-1)}\ka + c(1 + \tfrac{(2n+1)(k-1)}{n})|\om|^{2}\right)d\vol_{h}, & &\text{if}\,\, k > 3.\\
\label{simonsinequalityk3}
&0 \geq \int_{M}|\om|^{2} \left( \tfrac{n+1}{n(n-1)}\ka + \tfrac{2n-1}{n}c|\om|^{2}\right)d\vol_{h},& &\text{if}\,\, k = 3.\\
\label{simonsinequalityk2}
&0 \geq \int_{M}|\om|^{2}\left(\tfrac{1}{n-1}\ka + c|\om|^{2}\right)d\vol_{h}, & &\text{if}\,\, k = 2.
\end{align}
\end{corollary}

\begin{example}\label{simonsexample}
The $k = 2$ case of Corollary \ref{simonscorollary} recovers an integral estimate of the scalar curvature of a compact mean curvature zero hypersurface in a round sphere due to J. Simons \cite{Simons}.
Applied with $h$ the induced metric and $\om$ the second fundamental form of a mean curvature zero compact immersed hypersurface in the $(n+1)$-dimensional round sphere of scalar curvature $n(n+1)$ as in Example \ref{constraintequationsexample}, Theorem \ref{simonstheorem} recovers the specialization to such hypersurfaces of a well known theorem of Simons \cite[Theorem $5.3.2$]{Simons} (which applies to compact submanifolds of arbitrary codimension). Concretely, in this case, $c = -1$ and $\scal - |\om|^{2} = \ka = n(n-1)$, so \eqref{simonsinequalityk2} becomes 
\begin{align}\label{simonsk2}
0 \geq \int_{M}|\om|^{2}\left(n -|\om|^{2}\right)d\vol_{h} = -n^{2}(n-1)^{2}\int_{M}\left(1 - \tfrac{\scal}{n(n-1)}\right)\left(\tfrac{n-2}{n-1} - \tfrac{\scal}{n(n-1)}\right)d\vol_{h}.
\end{align}
which recovers \cite[Theorems $5.3.2$ and $5.3.3$]{Simons}.
From \eqref{simonsk2} it follows that either $\scal = n(n-1)$ and $\om$ is identically zero, so that the hypersurface is a totally geodesic hypersphere; $\scal = n(n-2)$ and $|\om|^{2} = n$, in which case $\om$ is parallel; or $\inf_{M}\scal < n(n-2)$, which is \cite[Corollary $5.3.3$]{Simons}.
\end{example}

\begin{example}\label{chenogiueexample}
The $k = 3$ case of Corollary \ref{simonscorollary} recovers the analogous integral estimate for the scalar curvature of a compact mean curvature zero Lagrangian submanifold of a constant holomorphic sectional curvature Kähler manifold due to B.-Y. Chen and K. Ogiue \cite{Chen-Ogiue}. 
Applied with $h$ the induced metric and $\om$ the twisted second fundamental form of a mean curvature zero compact immersed Lagrangian submanifold in a $2n$-dimensional Kähler manifold of constant holomorphic section curvature $\hat{c}$ as in Theorem \ref{constantsecttheorem}, Theorem \ref{simonstheorem} recovers the specialization to such hypersurfaces of a theorem of Chen-Ogiue \cite[Theorem $4.1$]{Chen-Ogiue}. Concretely, in this case, $c = -1$, $\ka = \hat{c}n(n-1)$,  and $|\om|^{2} = \hat{c}n(n-1) - \scal$, so \eqref{simonsinequalityk3} becomes 
\begin{align}\label{chenogiue}
\begin{aligned}
0 &\geq \int_{M}|\om|^{2}\left( \tfrac{n(n+1)}{2n-1}\hat{c} - |\om|^{2}\right)d\vol_{h}
= n^{2}(n-1)^{2}\int_{M}\left(\hat{c} - \tfrac{\scal}{n(n-1)}\right)\left(\tfrac{\scal}{n(n-1)} - \tfrac{2n(n-2)}{(2n-1)(n-1)}\hat{c}\right)d\vol_{h},
\end{aligned}
\end{align}
which recovers \cite[Theorem $4.1$]{Chen-Ogiue}. 
From \eqref{chenogiue} it follows that, if $\hat{c} > 0$, then either $\scal = n(n-1)\hat{c}$ and $\om$ is identically zero, so that the hypersurface is a totally geodesic hypersphere; $\scal = \tfrac{2n^{2}(n-2)}{2n-1}\hat{c}$ and $|\om|^{2} =\tfrac{n(n+1)}{2n-1}\hat{c} $, in which case $\om$ is parallel; or $\inf_{M}\scal < \tfrac{2n^{2}(n-2)}{2n-1}\hat{c}$.
\end{example}

One point here is that these examples are somehow the same result for different values of $k$. The corresponding statement obtained here for $k > 3$ is plainly not sharp, because certain inequalities for norms of tensors used in intermediate steps that are sharp for $k \leq 3$ can be improved when $k > 3$. However, even when $k \leq 3$, the method of proof has the virtues, when compared with previous approaches, that it is uniform in $k$ and that it does not suppose an immersion in an ambient space.

\subsection{Review of Weitzenböck formulas}\label{weitzenbocksection}
The operator $\klie$ defined in \eqref{kliedefined} maps the vector space $\Ga(\symkt)$ into $\Ga(\precod^{k+1}(\ctm)) $ where
\begin{align}\label{precoddefined}
\precod^{k+1}(\std) = \left\{
\phi_{iji_{1}\dots i_{k-1}}\in \tensor^{k+1}\std: \begin{aligned}&\phi_{iji_{1}\dots i_{k-1}} = \phi_{[ij]i_{1}\dots i_{k-1}} = \phi_{ij(i_{1}\dots i_{k-1})},\\
& \phi_{pji_{1}\dots i_{k-2}}\,^{p} =0, \phi_{[iji_{s}]i_{1}\dots \hat{i}_{s}\dots i_{k-1}} = 0 \,\,\text{for}\,\, 1\leq s \leq k-1.
\end{aligned}\right\}
\end{align}
comprises trace-free $(k+1)$-tensors having the symmetries determined by the Young projector given by symmetrization over the rows followed by antisymmetrization over the columns of the Young diagram corresponding to the partition $(k, 1)$. 
The formal adjoint $\kliea:\Ga(\precod^{k+1}(\ctm)) \to \Ga(\symkt)$ satisfies
\begin{align}
\label{klieaklie}
&\kliea(\phi)_{i_{1}\dots i_{k}} = -D^{p}\phi_{p(i_{1}\dots i_{k})},&&
\kliea\klie(\om)_{i_{1}\dots i_{k}} = -D^{p}\klie(\om)_{p(i_{1}\dots i_{k})}.&
\end{align}

\begin{lemma}\label{ellipticlemma}
Let $(M, h)$ be an $n$-dimensional Riemannian manifold.
\begin{enumerate}
\item The differential operator $\clie:\Ga(\symkt) \to \Ga(\symkpt)$ has injective symbol, so 
\begin{align}
\Ga(\symkpt) = \clie(\Ga(\symkt)) \oplus (\ker \div \cap \Ga(\symkpt)),
\end{align}
and $\div \clie:\Ga(\symkt) \to \Ga(\symkt)$ is an elliptic operator. If $M$ is compact, then $\div \clie$ is nonpositive and $\ker \div \clie = \ker \clie$.
\item For any $c \geq \tfrac{n+2(k-2)}{2(n+k-3)}$, the operator $\diamond_{c} = (\klie, \sqrt{c}\div):\Ga(\symkt) \to \Ga(\precod^{k+1}(\ctm) \oplus S^{k-1}_{0}(\ctm))$ has injective symbol, so 
\begin{align}
\diamond_{c}^{\ast}\diamond_{c} = -\kliea\klie + c\clie\div
\end{align}
is an elliptic operator. If $M$ is compact, then $\diamond_{c}$ is nonpositive and $\ker \diamond_{c} = \ker \klie \cap \ker \div$. 
\item For any $c \geq \tfrac{(k+1)(n+2(k-1))}{2k(n+k-3)}$, the operator $\kpc_{c} = (\klie, \sqrt{c}\clie):\Ga(\symkt) \to \Ga(\precod^{k+1}(\ctm) \oplus S^{k+1}_{0}(\ctm))$ has injective symbol, so 
\begin{align}
\kpc_{c}^{\ast}\kpc_{c} = -\kliea\klie + c\div\clie
\end{align}
is an elliptic operator. If $M$ is compact, then $\kpc_{c}$ is nonpositive and $\ker \kpc_{c} = \ker \klie \cap \ker \clie$. 
\end{enumerate}
\end{lemma}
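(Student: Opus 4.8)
The plan is to treat all three parts by a single mechanism: each displayed second-order operator is, up to the paper's sign convention, of the form $P^{\ast}P$ for a first-order operator $P$ assembled from $\clie$, $\klie$, $\div$, so that it is elliptic exactly when $P$ has injective principal symbol, and on a compact manifold its sign and kernel are read off by integration by parts. First I would invoke the adjoint relations already available: $\clie^{\ast} = -\div$ on trace-free tensors (since $\clie$ is the formal adjoint of $-\div\circ\tf$ and $\tf$ is the identity on the trace-free targets), the dual relation $\div^{\ast} = -\clie$, and $\klie^{\ast} = \kliea$ from \eqref{klieaklie}. Pairing each operator with $\om$ and integrating by parts then gives
\begin{align*}
\ilp\div\clie\,\om,\om\irp &= -\iln\clie\om\irn^{2}, \\
\ilp(-\kliea\klie + c\clie\div)\om,\om\irp &= -\iln\klie\om\irn^{2} - c\iln\div\om\irn^{2}, \\
\ilp(-\kliea\klie + c\div\clie)\om,\om\irp &= -\iln\klie\om\irn^{2} - c\iln\clie\om\irn^{2},
\end{align*}
which simultaneously exhibit the asserted nonpositivity and identify the kernel with the common kernel of the constituent first-order operators. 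Thus, given elliptic regularity (so that the $L^{2}$ kernel consists of smooth sections), everything reduces to injectivity of the relevant principal symbols.

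For the symbols I would substitute a nonzero covector $\xi$ for $D$ in \eqref{cliedefined}, \eqref{kliedefined}, and in $\div$, so that $\sigma_{\xi}(\clie)$, $\sigma_{\xi}(\klie)$, $\sigma_{\xi}(\div)$ are the orthogonal projections of $\om\mapsto\xi\tensor\om$ onto the three pairwise inequivalent irreducible summands $\symkpt$, $\precod^{k+1}(\ctm)$, $\symkmt$ of $\ctm\tensor\symkt$; explicitly $\sigma_{\xi}(\clie)\om = \tf(\xi\sprod\om)$, $\sigma_{\xi}(\div)\om = \iota_{\xi}\om$, and $\sigma_{\xi}(\klie)\om$ is the $\precod^{k+1}(\ctm)$-component. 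The contraction that proves Lemma \ref{domdecompositionlemma} gives the symbol analogue of \eqref{normdom},
\begin{align*}
|\xi|^{2}|\om|^{2} = |\sigma_{\xi}(\clie)\om|^{2} + \tfrac{2k}{k+1}|\sigma_{\xi}(\klie)\om|^{2} + \tfrac{k(n+2(k-2))}{(n+k-3)(n+2(k-1))}|\sigma_{\xi}(\div)\om|^{2}.
\end{align*}
Injectivity is then algebraic. For (1), if $\tf(\xi\sprod\om)=0$ with $\om$ trace-free then $\xi\sprod\om$ is pure trace, and contracting repeatedly with $\xi^{\sharp}$ forces $\om=0$. For (2), a tensor in $\ker\sigma_{\xi}(\klie)\cap\ker\sigma_{\xi}(\div)$ has $\iota_{\xi}\om=0$ and, the hook component also vanishing, makes $\xi\tensor\om$ totally symmetric, whence $\om=\lambda\,\xi^{\sprod k}$; then $\iota_{\xi}\om=\lambda|\xi|^{2}\xi^{\sprod(k-1)}=0$ forces $\lambda=0$ (the case $k=1$ is the elementary $\xi\wedge\om=0=\iota_{\xi}\om\Rightarrow\om=0$). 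For (3), a tensor in $\ker\sigma_{\xi}(\klie)\cap\ker\sigma_{\xi}(\clie)$ has $\xi\tensor\om$ purely in the trace summand, $\xi\tensor\om=\ih(\iota_{\xi}\om)$, and contracting with $\xi^{\sharp}$ forces $\om=0$. The stated lower bounds on $c$ are the exact values at which, after substituting the symbol identity to eliminate the absent component, the forms $|\sigma_{\xi}(\klie)\om|^{2}+c|\sigma_{\xi}(\div)\om|^{2}$ and $|\sigma_{\xi}(\klie)\om|^{2}+c|\sigma_{\xi}(\clie)\om|^{2}$ are bounded below by a positive multiple of $|\xi|^{2}|\om|^{2}$; I would fix them by this calibration.

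With injective symbol in hand, ellipticity of the three second-order operators is immediate, and the decomposition $\symkpt = \clie(\symkt)\oplus(\ker\div\cap\symkpt)$ in (1) is the orthogonal splitting determined by $\clie^{\ast}=-\div$: the two summands are orthogonal because $\ilp\clie\al,\be\irp=-\ilp\al,\div\be\irp$, and injectivity of the symbol of $\clie$ gives that they span, by the standard Hodge-theoretic argument. On compact $M$ the three pairings above give nonpositivity, and since each right-hand side is a sum of squared norms it vanishes exactly on $\ker\clie$, on $\ker\klie\cap\ker\div$, and on $\ker\klie\cap\ker\clie$ respectively, which elliptic regularity identifies with the kernels of the corresponding second-order operators. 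The \emph{main obstacle} is the symbol injectivity carried out uniformly in $k$ with the correct thresholds: part (3) is the most delicate, since vanishing of the $\clie$- and $\klie$-symbols controls only two of the three components of $\xi\tensor\om$, so one must feed in the norm identity to rule out a nonzero tensor supported on the surviving divergence component.
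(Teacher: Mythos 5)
Your proposal is correct and reaches the lemma by the same overall mechanism as the paper---injectivity of the symbols of the first-order constituents, ellipticity of the associated second-order operators, and the three integration-by-parts identities, which are verbatim the ones in the paper's proof---but your treatment of the symbol step is genuinely different. The paper computes the individual symbol norms in closed form (\eqref{cliesymbolnorm} and \eqref{kliesymbolnorm}), expressing each as a combination of $|Z|^{2}|\phi|^{2}$ and $|\imt(Z)\phi|^{2}$, and then reads off both injectivity and the thresholds on $c$ from the signs of the coefficients; those same formulas are recycled later in the proof of the refined Kato inequalities (Lemma \ref{katolemma}), so the computation earns its keep. You instead argue algebraically from the irreducible decomposition of $\xi\tensor\om$: pure trace for (1), total symmetry together with $\imt(\xi)\om = 0$ for (2), and the trace summand for (3). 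These arguments are sound (in (2), symmetry of $\xi\tensor\om$ in its first two slots followed by contraction with $\xi^{\sharp}$ gives $|\xi|^{2}\om = \xi\tensor\imt(\xi)\om = 0$ directly, without passing through $\om = \lambda\,\xi^{\sprod k}$), and they have the pleasant side effect of showing that the symbols of the pairs $(\klie,\sqrt{c}\div)$ and $(\klie,\sqrt{c}\clie)$ are injective for \emph{every} $c>0$: vanishing of the pair forces vanishing of both components regardless of $c$, so the stated ranges of $c$ are covered with no calibration at all. The one point to correct is your account of that calibration: it cannot be carried out as described, because your symbol analogue of \eqref{normdom} is a single linear relation among the three component norms and $|\imt(\xi)\om|^{2}$, so ``eliminating the absent component'' still leaves, say, $|\sbl_{\klie}(\xi)(\om)|^{2} + c\,|\imt(\xi)\om|^{2}$ uncontrolled; one needs the individual formulas---equivalently the refined bound $|\imt(\xi)\om|^{2}\leq\tfrac{n+k-3}{n+2(k-2)}|\xi|^{2}|\om|^{2}$, which is precisely the nonnegativity of \eqref{kliesymbolnorm}---and with that bound in hand the stated thresholds are in any case not ``exact values'' for positivity, since injectivity already holds for all $c>0$. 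Because your algebraic injectivity arguments render the thresholds immaterial, this is a misstatement rather than a gap, and the remaining steps (the splitting in (1) via $\clie^{\ast} = -\div$ plus the standard elliptic splitting argument, and the identification of kernels on compact $M$) match the paper's proof exactly.
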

\begin{proof}
Write $\sbl_{\clie}(Z)(\phi)$, $\sbl_{\klie}(Z)(\phi)$, and $\sbl_{\div}(Z)(\phi)$ for the symbols of $\clie$, $\klie$, and $\div$ applied to the vector $Z^{i}$ and $\phi \in \Ga(\symkt)$. Write $(i(Z)\phi)_{i_{1}\dots i_{k-1}} = Z^{p}\phi_{pi_{1}\dots i_{k-1}}$.  
Straightforward computations show
\begin{align}\label{cliesymbolnorm}
&\begin{aligned}
|\sbl_{\clie}(Z)(\phi)|^{2}& = \tfrac{1}{k+1}|Z|^{2}|\phi|^{2} + \tfrac{k(n+2(k-2))}{(k+1)(n+2(k-1))}|i(Z)\phi|^{2},
\end{aligned}\\
\label{kliesymbolnorm}
&\begin{aligned}
|\sbl_{\klie}(Z)(\phi)|^{2} 
& =  \tfrac{1}{2}\left(|Z|^{2}|\phi|^{2} -|i(Z)\phi|^{2}\right) - \tfrac{k-1}{2(n+k-3)}|i(Z)\phi|^{2} =  \tfrac{1}{2}\left(|\phi|^{2} - \tfrac{n+2(k-2)}{n+k-3}|i(Z)\phi|^{2}\right).
\end{aligned}
\end{align}
When $k = 1$ and $n = 2$ the coefficient of the pure trace terms in \eqref{kliesymbolnorm} should be understood in a limiting sense. 
By \eqref{cliesymbolnorm}, if $\sbl_{\clie}(Z)(\phi) = 0$ for nonzero $Z$, then $\phi = 0$, so $\clie$ has injective symbol. The remaining claims follow from standard elliptic operator theory as in \cite[section $4$]{Berger-Ebin}. If $M$ is compact, then $\ilp \div \clie \om, \om\irp = - \iln\clie(\om)\irn^{2} - c\iln \div \om \irn^{2} \leq 0$ and $\div\clie\om = 0$ if and only if $\clie(\om) = 0$.
Combining $|\sbl_{\div}(Z)(\phi)|^{2} = |\imt(Z)\phi|^{2}$ with \eqref{kliesymbolnorm} yields
\begin{align}
|\sbl_{\diamond_{c}}(Z)(\phi)|^{2} =  \tfrac{1}{2}|Z|^{2}|\phi|^{2} + \left(c - \tfrac{1}{2}\tfrac{n+2(k-2)}{n+k-3}\right)|i(Z)\phi|^{2},
\end{align}
from which the injectivity of $\sbl_{\diamond_{c}}(Z)$ is apparent. The ellipticity of $\diamond_{c}$ follows from standard elliptic operator theory as in \cite[section $6$]{Berger-Ebin}. If $M$ is compact, then $\ilp \diamond_{c}\om, \om\irp = - \iln\klie(\om)\irn^{2} - c\iln \div \om \irn^{2} \leq 0$ and $\diamond_{c}\om = 0$ if and only if $\klie(\om) = 0$ and $\div \om = 0$.
Combining \eqref{cliesymbolnorm} and \eqref{kliesymbolnorm} shows
\begin{align}
|\sbl_{\diamond_{c}}(Z)(\phi)|^{2} =  \tfrac{k+1 + 2c}{2(k+1)}|Z|^{2}|\phi|^{2} + \left(\tfrac{ck(n+2(k-2)}{(k+1)(n + 2(k-1))} - \tfrac{1}{2}\tfrac{n+2(k-2)}{n+k-3}\right)|i(Z)\phi|^{2},
\end{align}
from which the injectivity of $\sbl_{\kpc_{c}}(Z)$ is apparent. The ellipticity of $\kpc_{c}$ follows from standard elliptic operator theory. If $M$ is compact, then $\ilp \kpc_{c}\om, \om\irp = - \iln\klie(\om)\irn^{2} - c\iln \clie \om \irn^{2} \leq 0$ and $\kpc_{c}\om = 0$ if and only if $\klie(\om) = 0$ and $\clie(\om) = 0$.
\end{proof}

\begin{lemma}\label{culaplemma}
Let $(M, h)$ be a Riemannian manifold of dimension $n \geq 2$. For $\al \in \rea$ define a formally self-adjoint second order elliptic differential operator $\culap_{\al}: \Ga(S^{k}_{0}(\ctm)) \to \Ga(S^{k}_{0}(\ctm))$ by 
\begin{align}\label{culapdefined}
\culap_{\al}\om = \lap_{h}\om +\tfrac{\al}{k} \op{\sR}(\om).
\end{align}
\begin{enumerate}
\item If $\al = -1$, then
\begin{align}
\label{lapom3} \culap_{-1} = \lap_{h} \om - \tfrac{1}{k}\op{\sR}(\om)& =  \tfrac{n+2(k-2)}{n+k-3}\clie \div(\om) - 2\kliea\klie(\om),
\end{align}
is an elliptic operator on $\Ga(\symkt)$. If $M$ is compact, then $\culap_{-1}$ is nonpositive and $\ker \culap_{-1} \cap \Ga(S^{k}_{0}(\ctm)) = \ker \klie \cap \ker \div \cap \Ga(S^{k}_{0}(\ctm))$.
\item If $\al = \tau(k) = \tfrac{k}{n+k-2}$, then
\begin{align}
\label{lapom2}\begin{aligned}
\culap_{\tau(k)}\om =  \lap_{h} \om + \tfrac{1}{n+k-2}\op{\sR}(\om)& =  \tfrac{n+2(k-1)}{n+k-2}\div \clie(\om) - \tfrac{2k(n+k-3)}{(k+1)(n+k-2)}\kliea\klie(\om),
\end{aligned}
\end{align}
is an elliptic operator on $\Ga(\symkt)$. If $M$ is compact, then $\culap_{\tau(k)}$ is nonpositive and $\ker \culap_{\tau(k)} \cap \Ga(S^{k}_{0}(\ctm)) = \ker \klie \cap \ker \clie \cap \Ga(S^{k}_{0}(\ctm))$.
\item If $-1 < \al < \tau(k) = \tfrac{k}{n+k-2}$ then $\culap_{\al}$ is an elliptic operator on $\Ga(\symkt)$. If $M$ is compact, then $\culap_{\al}$ is nonpositive and $\ker \culap_{\al} \cap \Ga(S^{k}_{0}(\ctm)) = \ker D \cap \Ga(S^{k}_{0}(\ctm))$.
\end{enumerate}
\end{lemma}
\begin{proof}
For $\om \in \Ga(\symkt)$, straightforward computations using the Ricci identity and \eqref{cliedefined} show
\begin{align}
\label{liediv}
\begin{aligned}
\clie \div(\om) &= \tfrac{1}{2}\{h, \div \om\} - \tfrac{2}{k(n+2(k-2))}\sraise(\div^{2}(\om)),
\end{aligned}\\
\begin{aligned}
\label{divlie}
\lap_{h}\om + \op{\sR}(\om) & = (k+1)\div \clie(\om)  -  \tfrac{k(n+2(k-2))}{n+2(k-1)}\clie\div(\om).
\end{aligned}
\end{align}
Differentiating \eqref{domkl} and tracing the result using \eqref{liediv} and \eqref{klieaklie} gives
\dps{\label{lapom}
\lap_{h}\om& = \div \clie(\om) + \tfrac{k(n+2(k-2))}{(n+k-3)(n+2(k-1))}\clie \div(\om) - \tfrac{2k}{k+1}\kliea\klie(\om).
}
Solving \eqref{divlie} for $\lap_{h}\om$ and substituting the result into \eqref{lapom} yields
\begin{align}\label{klieweitzenbock}
\begin{aligned}
&\tfrac{1}{k}\op{\sR}(\om)  = \div \clie(\om) - \tfrac{(n+k-2)(n+2(k-2))}{(n+k-3)(n+2(k-1))}\clie\div(\om) + \tfrac{2}{k+1}\kliea\klie(\om).
\end{aligned}
\end{align}
Equation \eqref{divlie} and \eqref{klieweitzenbock} are the analogues of the corresponding identities for operators on antisymmetric forms, for example \cite[Equations $(2.8)$ and $(2.9)$]{Semmelmann}. Rewriting \eqref{lapom} in two different ways using \eqref{divlie} gives \eqref{lapom3} and \eqref{lapom2}. The ellipticity of $\culap_{\al}$ in these cases, and its nonpositivity when $M$ is compact, follow from Lemma \ref{ellipticlemma}. Being convex combinations of the elliptic operators $\culap_{-1}$ and $\culap_{\tau(k)}$, the operators $\culap_{\al}$ for $-1 < \al < \tau(k)$ are elliptic. If $M$ is compact, the same argument shows that $\culap_{\al}$ is nonpositive. By \eqref{lapom3},
\begin{align}\label{culap1}
\culap_{\al} \om = (1+\al)\div \clie(\om) + \tfrac{(n+2(k-2))(k - \al(n+k-2))}{(n+k-3)(n+2(k-1))}\clie \div(\om)  + \tfrac{2(\al - k)}{k+1}\kliea\klie(\om),
\end{align}
from which follows $\ker \culap_{\al} \cap \Ga(S^{k}_{0}(\ctm)) \supset \ker D \cap \Ga(S^{k}_{0}(\ctm))$.
If $M$ is compact and $\om \in \ker \culap \cap \Ga(S^{k}_{0}(\ctm))$, integrating the right side of \eqref{culap1} gives 
\begin{align}
0 =  (1+\al)\iln\clie(\om)\irn^{2} +  \tfrac{(n+2(k-2))(k - \al(n+k-2))}{(n+k-3)(n+2(k-1))}\iln\div(\om)\irn^{2} + \tfrac{2(k-\al)}{k+1}\iln\klie(\om)\irn^{2},
\end{align}
and together with \eqref{domkl} this implies $\ker \culap_{\al} \cap \Ga(S^{k}_{0}(\ctm)) \subset \ker D \cap \Ga(S^{k}_{0}(\ctm))$.
\end{proof}

\begin{corollary}\label{hypothesiscorollary}
If $h$ is a Riemannian metric on a compact manifold $M$ of dimension $n \geq 2$, and $\om \in \Ga(\symkt)\cap \ker(\lap_{h} - \tfrac{1}{k}\op{\sR})$, then $\om$ satisfies the hypotheses of Lemma \ref{tracefreeconstantlemma}.
\end{corollary}
\begin{proof}
Since $M$ is compact, and $\lap_{h} - \frac{1}{k}\op{\sR} = \culap_{-1}$, Lemma \ref{culaplemma} shows that $\ker(\lap_{h} -\frac{1}{k} \op{\sR}) \cap \Ga(S^{k}_{0}(\ctm)) = \ker \klie \cap \ker \div \cap \Ga(S^{k}_{0}(\ctm))$.
\end{proof}

\begin{example}
Let $M$ be compact. Define a functional $\cubic_{\al}$ with arguments a Riemannian metric $h$ and a tensor $\om \in \Ga(S^{k}_{0}(\ctm))$ by $\cubic_{\al}(h, \om) = -\ilp\om,\culap_{\al}\om\irp$. For fixed $h$ the first variation of $\cubic_{\al}(h, \om)$ in $\om$ yields the equation $\culap_{\al}\om = 0$. Lemma \ref{culaplemma} implies that for $-1 \leq \al \leq \tfrac{k}{n+k-2}$ the functional $\cubic_{\al}(h, \om)$ is nonnegative.
\end{example}

\begin{example}
The \emph{Lichnerowicz Laplacian} $\lich$ is the formally self-adjoint operator which acts on an arbitrary rank $k$ covariant tensor $\om_{i_{1}\dots i_{k}}$ by $-\lap_{h}\om + \op{\sR}(\om)$ (see \cite[page $27$]{Lichnerowicz-propagateurs} for the definition of $\op{\sR}(\om)$ for general tensors $\om$). The linearization of the Ricci curvature of the metric $h$ at the symmetric two-tensor $a_{ij}$ is $\tfrac{1}{2}\lich a_{ij} + D_{(i}D^{p}a_{j)p}$. On differential forms the Lichnerowicz Laplacian restricts to the usual Hodge Laplacian. The Lichnerowicz operator restricts to $-\culap_{-k}$ on $\Ga(S^{k}_{0}(\ctm))$.
\end{example}

\begin{example}
The special case of $\culap_{-1} = \lap_{h} - \tfrac{1}{2}\op{\sR}$ acting on $\Ga(S^{2}_{0}(\ctm))$ was studied by J. Simons in \cite{Simons}, and this case of Lemma \ref{culaplemma} is given in \cite[section $6$.c]{Berger-Ebin}.
\end{example}

\begin{example}
As is shown in \cite{Berger-Ebin}, an infinitesimal deformation of an Einstein metric $h$ on a compact manifold is identified with an $\om \in \Ga(S^{2}_{0}(\ctm))\cap \ker \div$ solving $\lap \om = \op{\sR}(\om) - \tfrac{2\sR}{n}\om$. As is summarized in \cite[section $12$.H]{Besse} (the notations there are different than those here), using this equation in conjunction with the positivity conditions given by integrating \eqref{divlie} and \eqref{lapom3} gives a proof of the criterion of N. Koiso \cite[Theorem $3.3$]{Koiso-nondeformability}, for the rigidity of an Einstein metric, in particular showing that an Einstein metric of negative sectional curvature is rigid provided $n \geq 3$.
\end{example}

The Weitzenböck formulas of Corollary \ref{weitzenbockcorollary} are in \cite[Section $6.4$]{Fox-ahs}, in \cite[Section $6$]{Heil-Moroianu-Semmelmann} (where they are obtained as special cases of the general machinery of \cite{Semmelmann-Weingart}), and in some equivalent form in \cite{Dairbekov-Sharafutdinov}.

\begin{corollary}\label{weitzenbockcorollary}
Let $(M, h)$ be a Riemannian manifold of dimension $n \geq 2$.  For $\om \in \Ga(\symkt)$ there hold
\begin{align}
\label{lapomsq}\tfrac{1}{2}\lap_{h}|\om|^{2} & = |D\om|^{2} + (k+1)\lb \om, \div \clie(\om)\ra  -  \tfrac{k(n+2(k-2))}{n+2(k-1)}\lb \om, \clie\div(\om)\ra - \qR(\om).\\
\label{lapomdivlie}&\begin{aligned}
\tfrac{1}{2}\lap_{h}|\om|^{2} & = |D\om|^{2} + \tfrac{n+2(k-1)}{n+k-2}\lb \om, \div \clie(\om)\ra  - \tfrac{2k(n+k-3)}{(k+1)(n+k-2)}\lb \om, \kliea\klie(\om)\ra
- \tfrac{1}{n+k-2}\qR(\om).
\end{aligned}\\
\label{lapomliediv}\tfrac{1}{2}\lap_{h}|\om|^{2} & = |D\om|^{2} + \tfrac{n+2(k-2)}{n+k-3}\lb\om, \clie \div(\om)\ra - 2\lb\om, \kliea\klie(\om)\ra + \tfrac{1}{k}\qR(\om) . 
\end{align}
\end{corollary}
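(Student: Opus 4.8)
The plan is to reduce all three identities to a single pointwise Bochner (Weitzenböck) identity and then invoke the representations of $\lap_h\om$ already produced in Lemma \ref{culaplemma}. First I would record the elementary identity, valid for any smooth section $\om$ with $\lap_h = D^pD_p$ the rough Laplacian,
\begin{align}
\tfrac{1}{2}\lap_h|\om|^2 = |D\om|^2 + \lb\om,\lap_h\om\ra,
\end{align}
which follows at once from metric compatibility of $D$: applying $D_p$ twice to $|\om|^2 = \lb\om,\om\ra$ gives $\tfrac{1}{2}D^pD_p\lb\om,\om\ra = \lb D^p\om,D_p\om\ra + \lb\om,D^pD_p\om\ra$, and the first term is $|D\om|^2$. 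Recalling that $\qR(\om) = \lb\om,\op{\sR}(\om)\ra$, each of the three displays is then obtained by pairing $\om$ against one of the formulas for $\lap_h\om$ supplied by Lemma \ref{culaplemma}, using only linearity of $\lb\om,\cdot\ra$.

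Concretely, for \eqref{lapomsq} I would solve \eqref{divlie} for the rough Laplacian, writing $\lap_h\om = (k+1)\div\clie(\om) - \tfrac{k(n+2(k-2))}{n+2(k-1)}\clie\div(\om) - k\op{\sR}(\om)$, and take the inner product with $\om$. For \eqref{lapomliediv} I would rearrange the first identity \eqref{lapom3} of Lemma \ref{culaplemma} as $\lap_h\om = \op{\sR}(\om) + \tfrac{n+2(k-2)}{n+k-3}\clie\div(\om) - 2\kliea\klie(\om)$ and pair against $\om$. For \eqref{lapomdivlie} I would rearrange \eqref{lapom2} as $\lap_h\om = \tfrac{n+2(k-1)}{n+k-2}\div\clie(\om) - \tfrac{2k(n+k-3)}{(k+1)(n+k-2)}\kliea\klie(\om) - \tfrac{k}{n+k-2}\op{\sR}(\om)$ and pair against $\om$. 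In each case the definition of $\qR$ reproduces the stated coefficients verbatim.

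Since all the analytic content has already been extracted in Lemma \ref{culaplemma}, where the Ricci identity, the definitions \eqref{cliedefined} and \eqref{kliedefined} of $\clie$ and $\klie$, and the trace bookkeeping were carried out, there is essentially no remaining obstacle; the only point requiring care is fixing the sign convention so that $\lap_h = D^pD_p$ is the nonpositive rough Laplacian consistent with Lemma \ref{culaplemma}. A useful internal consistency check, which I would verify in passing, is that the three right-hand sides must coincide because each equals $\tfrac{1}{2}\lap_h|\om|^2$; their mutual equality is precisely the content of the relations \eqref{lapom} and \eqref{divlie} among $\div\clie$, $\clie\div$, $\kliea\klie$, and $\qR$. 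Thus the corollary adds nothing beyond Lemma \ref{culaplemma} except its repackaging as quadratic forms adapted to the Bochner method.
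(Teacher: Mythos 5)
Your proposal is correct and is essentially the paper's own proof: the paper likewise obtains \eqref{lapomsq}--\eqref{lapomliediv} by contracting \eqref{divlie}, \eqref{lapom2}, and \eqref{lapom3} with $\om$, with the identity $\tfrac{1}{2}\lap_{h}|\om|^{2} = |D\om|^{2} + \lb \om, \lap_{h}\om\ra$ left implicit in the phrase ``contracting with $\om$.'' Your explicit statement of that Bochner identity and the consistency remark that any two of the three displays imply the third (also noted parenthetically in the paper) are the only differences, and they are purely expository.
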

\begin{proof}
Contracting \eqref{divlie}, \eqref{lapom2}, and \eqref{lapom3} with $\om$ yields \eqref{lapomsq}-\eqref{lapomliediv}.
\end{proof} 


\subsection{Refined Kato inequalities and vanishing theorems for conformal Killing and divergence-free Codazzi tensors}\label{katosection}
The inequality \eqref{kato} of Lemma \ref{katolemma} generalizes the estimate for the second fundamental form of a minimal hypersurface proved in \cite{Schoen-Simon-Yau}. With a $1$ in place of $\tfrac{n+k-2}{n+2(k-1)}$, it follows from the Cauchy-Schwarz inequality, and is known as a \emph{Kato inequality}. The estimates \eqref{kato}-\eqref{kato3} of Lemma \ref{katolemma} are \emph{refined Kato inequalities} in the sense of \cite{Calderbank-Gauduchon-Herzlich} and \cite{Branson-kato} (see also \cite{Hitchin-vanishing}), and can be deduced from the results in either of those papers. In particular the results of \cite[Section $6$]{Calderbank-Gauduchon-Herzlich} include Lemma \ref{katolemma}, and the discussion at the very end of \cite[Section $6$]{Calderbank-Gauduchon-Herzlich} gives the explicit constants of \eqref{kato}-\eqref{kato3} for the cases $k = 1, 2$. (The $k = 2$ case of \eqref{kato} had earlier been stated in \cite[section $4$]{Bourguignon-magic}.) As considerable work is required to translate general representation theoretic statements into the concrete contexts here, and to keep the exposition self-contained, it is simpler to give here direct proofs following the general procedure described in the introduction of \cite{Branson-kato}, and not utilizing general representation theoretic machinery. These proofs were given previously in \cite{Fox-ahs}.

It is convenient to call a tensor $\om_{i_{1} \dots i_{k}} \in \Ga(S^{k}(\ctm))$ a \emph{dual Killing tensor} if $\om^{i_{1}\dots i_{k}} \in \Ga(S^{k}(TM))$ is a Killing tensor.
\begin{lemma}\label{katolemma}
Let $(M, h)$ be a Riemannian $n$-manifold with Levi-Civita connection $D$, and let $k \geq 1$. 

For a trace-free Codazzi tensor $\phi \in \Ga(\symkt)\cap \ker \klie \cap \ker \div$, where $|\phi|^{2} \neq 0$ there holds
\begin{align}\label{kato}
|d|\phi||^{2} \leq \tfrac{n+k-2}{n+2(k-1)}|D\phi|^{2}.
\end{align}
For a trace-free dual Killing tensor $\phi \in \Ga(\symkt)\cap \ker \clie \cap \ker \div$, where $|\phi|^{2} \neq 0$ there holds
\begin{align}\label{kato2}
|d|\phi||^{2} \leq \tfrac{k}{k+1}|D\phi|^{2}.
\end{align}
For $\phi \in \Ga(\symkt)\cap \ker \clie \cap \ker \klie$, where $|\phi|^{2} \neq 0$ there holds
\begin{align}\label{kato3}
|d|\phi||^{2} \leq \tfrac{k}{n+2(k-1)}|D\phi|^{2}.
\end{align}
\end{lemma}

\begin{proof}
Write $\sbl_{\clie}(Z)(\phi)$ for the symbol of $\clie$ applied to the vector $Z$ and $\phi \in \Ga(\symkt)$, and similarly for $\klie$ and $\div$. Write $(i(Z)\phi)_{i_{1}\dots i_{k-1}} = Z^{p}\phi_{pi_{1}\dots i_{k-1}}$ and suppose $Z$ has unit norm. The nonnegativity of $|\sbl_{\klie}(Z)(\phi)|^{2}$ in \eqref{kliesymbolnorm} yields
\begin{align}\label{klieinequality}
 \tfrac{n+2(k-2)}{n+k-3}|i(Z)\phi|^{2} \leq |\phi|^{2}.
\end{align}
(When $k = 1$ and $n = 2$ the coefficient of the pure trace terms in \eqref{kliesymbolnorm} should be understood in a limiting sense.)
Together \eqref{cliesymbolnorm} and \eqref{klieinequality} give 
\begin{align}\label{sblclie}
|\sbl_{\clie}(Z)(\phi)|^{2} \leq  \tfrac{1}{k+1}|\phi|^{2} + \tfrac{k(n+k-3)}{(k+1)(n+2(k-1))}|\phi|^{2} = \tfrac{n+k-2}{n+2(k-1)}|\phi|^{2}.
\end{align}
Contracting \eqref{domkl} with $\sbl_{D}(Z)(\om)$ and using the Cauchy-Schwarz inequality shows
\begin{align}\label{katoineq1}
\begin{aligned}
&\tfrac{1}{2}\imt(Z)d|\phi|^{2} = Z^{i}\phi^{i_{1}\dots i_{k}}D_{i}\phi_{i_{1}\dots i_{k}} = \lb \sbl_{D}(Z)(\phi), D\phi\ra \\
& = \lb\sbl_{\clie}(Z)(\phi), \clie(\phi)\ra + \tfrac{2k}{k+1} \lb \sbl_{\klie}(Z)(\phi), \klie(\phi)\ra + \tfrac{k(n+2(k-2))}{(n+k-3)(n+2(k-1))}\lb i(Z)\phi, \div(\phi)\ra,\\
& \leq |\sbl_{\clie}(Z)(\phi)||\clie(\phi)| + \tfrac{2k}{k+1}| \sbl_{\klie}(Z)(\phi)|| \klie(\phi)| + \tfrac{k(n+2(k-2))}{(n+k-3)(n+2(k-1))}|i(Z)\phi|| \div(\phi)|.
\end{aligned}
\end{align}
Suppose $\phi \in \Ga(\symkt) \cap \ker \klie\cap \ker \div$. By \eqref{normdom}, $|D\phi|^{2} = |\clie(\phi)|^{2}$. Substituting this and \eqref{sblclie} into \eqref{katoineq1} gives
\begin{align}
\begin{aligned}
|\phi|^{2}|\imt(Z)d|\phi||^{2} = \tfrac{1}{4}|\imt(Z)d|\phi|^{2}|^{2} &\leq |\sbl_{\clie}(Z)(\phi)|^{2}|\clie(\phi)|^{2} 
= |\sbl_{\clie}(Z)(\phi)|^{2}|D\phi|^{2} \leq   \tfrac{n+k-2}{n+2(k-1)}|\phi|^{2}|D\phi|^{2}.
\end{aligned}
\end{align}
This holds for all unit norm $Z$, so shows \eqref{kato}.

Suppose $\phi \in \Ga(\symkt) \cap \ker \clie\cap \ker \div$. By \eqref{normdom}, $|D\phi|^{2} = \tfrac{2k}{k+1}|\klie(\phi)|^{2}$, and, by \eqref{kliesymbolnorm}, $2|\sbl_{\klie}(Z)(\phi)|^{2} \leq |\phi|^{2}$. In \eqref{katoineq1} these give
\begin{align}
\begin{aligned}
|\phi|^{2}|\imt(Z)d|\phi||^{2} = \tfrac{1}{4}|\imt(Z)d|\phi|^{2}|^{2} &\leq (\tfrac{2k}{k+1})^{2}|\sbl_{\klie}(Z)(\phi)|^{2}|\klie(\phi)|^{2} \\& = \tfrac{2k}{k+1}|\sbl_{\klie}(Z)(\phi)|^{2}|D\phi|^{2} \leq   \tfrac{k}{n+1}|\phi|^{2}|D\phi|^{2}.
\end{aligned}
\end{align}
This holds for all unit norm $Z$, so shows \eqref{kato2}.

Suppose $\phi \in \Ga(\symkt) \cap \ker \clie\cap \ker \klie$. By \eqref{normdom}, $|D\phi|^{2} =\tfrac{k(n+2(k-2))}{(n+k-3)(n+2(k-1))}|\div(\phi)|^{2}$. With \eqref{klieinequality} in \eqref{katoineq1} this gives
\begin{align}
\begin{aligned}
|\phi|^{2}|\imt(Z)d|\phi||^{2}& = \tfrac{1}{4}|\imt(Z)d|\phi|^{2}|^{2}     \leq\left(\tfrac{k(n+2(k-2))}{(n+k-3)(n+2(k-1))}\right)^{2}|i(Z)\phi|^{2}| \div(\phi)|^{2} \\
&= \tfrac{k(n+2(k-2))}{(n+k-3)(n+2(k-1))}|i(Z)\phi|^{2}|D\phi|^{2} \leq  \tfrac{k}{n+2(k-1)}|\phi|^{2}|D\phi|^{2}.
\end{aligned}
\end{align}
This holds for all unit norm $Z$, so shows \eqref{kato3}.
\end{proof}

\begin{remark}
When $n = 2$ the inequalities \eqref{kato}-\eqref{kato3} are in fact equalities \cite[Section $3$]{Fox-2dahs}. 
\end{remark}

\begin{remark}\label{katoremark}
The proof of Lemma \ref{katolemma} shows that if $\phi \in \Ga(\symkt)$ then the largest eigenvalue $\mu$ of the symmetric two-tensor $\rictr(\phi \kwedge \phi)$ satisfies $\mu \leq \tfrac{n+k-3}{n+2(k-2)}|\phi|^{2}$. By \eqref{klieinequality}, for any vector field $X$ there holds $X^{i}X^{j}\rictr(\phi \kwedge \phi)_{ij} = |i(X)\rictr(\phi)|^{2} \leq \tfrac{n+k-3}{n+2(k-2)}|\phi|^{2}|X|^{2}$, which suffices to show the claim. This means $\rictr(\phi \kwedge \phi) \leq \tfrac{n+k-3}{n+2(k-2)}|\phi|^{2}h$. 
\end{remark}

\begin{lemma}\label{swlemma}
Let $h$ be a Riemannian metric on a manifold $M$ of dimension $n > 2$ and let $\om \in \Ga(\symkt)$. Wherever $\om \neq 0$ there hold
\begin{align}
\label{sharplapom}& |\om|^{(n+2(k-1))/(n+k-2)}\lap_{h}|\om|^{(n-2)/(n+k-2)}\geq \tfrac{n-2}{k(n+k-2)}\qR(\om), && \text{if}\,\, \om \in \ker \klie \cap \ker \div,\\
\label{sharplapom2}& |\om|^{(k+1)/k}\lap_{h}|\om|^{(k-1)/k}\geq -\tfrac{2(k-1)}{k+1}\lb \om, \kliea\klie(\om)\ra, && \text{if}\,\, \om \in \ker \clie \cap \ker \div,\\
\label{sharplapom3}& |\om|^{(n+2(k-1))/k}\lap_{h}|\om|^{(2-n)/k}\leq \tfrac{n-2}{k(n+k-2)}\qR(\om),&& \text{if}\,\, \om \in \ker \klie \cap \ker \clie.
\end{align}
\end{lemma}
\begin{proof}
Let $\om \in \Ga(\symkt)$. Wherever $|\om| > 0$ there holds
\begin{align}\label{lapnormla}
\tfrac{1}{2\la}|\om|^{2(1-\la)}\lap_{h}|\om|^{2\la} = \tfrac{1}{2}\lap_{h}|\om|^{2} + 2(\la - 1)|d|\om||^{2}.
\end{align}
Combining \eqref{lapnormla} with \eqref{katolemma}, \eqref{lapom}, \eqref{lapomdivlie}, \eqref{lapomliediv}, and Lemma \ref{katolemma} yields \eqref{sharplapom}-\eqref{sharplapom3}.
\end{proof}

Lemma \ref{swlemma} recovers a number of apparently unrelated classical results.

\begin{example}
If $h$ is flat and $f \in \cinf(M)$ is harmonic, then $\om_{i_{1}\dots i_{k}}= D_{i_{1}}\dots D_{i_{k}}f \in \Ga(\symkt) \cap \ker \klie \cap \ker \div$. By Lemma \ref{swlemma}, the function $|\om|^{p}$ is subharmonic for all $p \geq (n-2)/(n+k-2)$. For the flat Euclidean connection on $\rea^{n}$ and $k = 1$ this is \cite[Theorem $A$]{Stein-Weiss-harmonic}, and for $k >1$ it is \cite[Theorem $1$]{Calderon-Zygmund}. In the opposite direction, \cite[Theorem $2(b)$]{Stein-Weiss} shows that on flat Euclidean space the best $p$ for which $|\om|^{p}$ is subharmonic is $(n-2)/(n+k-2)$, and \cite[Theorem $2(a)$]{Stein-Weiss} shows that on flat Euclidean space, given any section $\om$ of $\symkt$ there is around every point a neighborhood $U$ and a harmonic function $f \in \cinf(U)$ such that on $U$ there holds $\om_{i_{1}\dots i_{k}}= D_{i_{1}}\dots D_{i_{k}}f$.

For general $h$, if $f \in \cinf(M)$ is harmonic then $df \in \ker \klie \cap \ker \div$, and Lemma \ref{swlemma} shows that 
\begin{align}
(n-1) |df|^{n/(n-1)}\lap_{h}|df|^{(n-2)/(n-1)}\geq (n-2)\sric(df, df).
\end{align} 
If $h$ has nonnegative Ricci curvature it follows that $|df|^{p}$ is subharmonic for any $p \geq (n-2)/(n-1)$. 
\end{example}

The rest of this section records some vanishing theorems for conformal Killing and trace-free Codazzi tensors. These are relevant here for identifying contexts in which the equations \eqref{projectivehiggsintro} or \eqref{stressenergyintro} admit only solutions $(h, \om)$ that are trivial in the sense that $\om$ vanish identically. 

These vanishing theorems for conformal Killing tensors and trace and divergence free Codazzi tensors are analogous to the somewhat stronger vanishing theorems for symmetric tensors on Kähler manifolds obtained by S. Kobayashi \cite{Kobayashi-holomorphicsymmetric, Kobayashi-holomorphictensor}. For compact Riemann surfaces such tensors are the real parts of sections of powers of the canonical bundle so more precise statements are available; see \cite{Fox-2dahs}. 

The statements of these vanishing theorems have several subtle aspects. First, the Weitzenböck identities lead naturally to hypotheses involving the definiteness of the quadratic form determined by the curvature operator on $\Ga(\symkt)$. Different authors define the curvature operator slightly differently; in particular it may or may not involve a term in the Ricci curvature (as it does here). Second, it is desirable to formulate hypotheses in terms of sectional curvature and some extra work is required to relate a condition on sectional curvature with the definiteness of the quadratic form determined by the curvature operator (however defined). Third, when the hypothesis is positivity or negativity of the sectional curvature, this implies the same condition for the Ricci curvature, and implies the definiteness of the curvature operator on trace-free symmetric tensors, whatever the particular definition used for the latter. Finally, when the manifold is not compact the arguments require the Kato inequalities, and the sharpest results require the refined Kato inequalities. 

The first vanishing theorem for trace-free Codazzi tensors on compact manifolds is due to M. Berger and D. Ebin \cite{Berger-Ebin} for rank two trace-free Codazzi tensors. Its extension to higher rank Codazzi tensors is due to S. Stepanov  \cite[Theorem $2$]{Stepanov}. Subsequent refinements can be found in \cite{Fox-ahs, Heil-Moroianu-Semmelmann, Shandra-Stepanov-Mikes, Stepanov-Tsyganok}. A good summary of previous work is in \cite{Stepanov-Tsyganok}. The strongest result in this direction is Theorem \ref{completecodazzitheorem} below, which is the slight sharpening of \cite[Theorem $5$]{Stepanov-Tsyganok} that results from using the refined Kato inequality \eqref{kato} in place of the usual Kato inequality. The analogous vanishing theorem for conformal Killing tensors is in \cite[Theorem $6.2$]{Fox-ahs}, while the improved result with a hypothesis on sectional curvature was proved in \cite[Theorem $1.6$]{Dairbekov-Sharafutdinov}.

It is convenient to say that $\qR$ is positive or negative (semi-)definite on $\symk$ or $\symkt$ if it is positive (semi-)definite or negative (semi-)definite as a quadratic form on $\Ga(\symk)$ or $\Ga(\symkt)$. Since, by Lemma \eqref{hycommutelemma}, $\qR(h^{\sprod k}) = 0$ for any $k \geq 1$, $\qR$ is not definite on $\Ga(S^{2k}(\ctm))$ for any $k \geq 1$.

\begin{theorem}[\cite{Fox-ahs}]\label{bochnerliedivtheorem}
Let $(M, h)$ be a compact Riemannian manifold of dimension $n > 2$.
\begin{enumerate}
\item\label{bcc1} If $\qR$ is nonnegative on $S^{k}_{0}(\ctm)$ then any $\om \in \Ga(\symkt) \cap \ker \klie \cap \ker \div$ is parallel. If moreover $\qR$ is at some point of $M$ strictly positive on $S^{k}_{0}(\ctm)$ then $\Ga(\symkt) \cap \ker \klie \cap \ker \div = \{0\}$. 
\item\label{bcc2} If $\qR$ is nonpositive on $S^{k}_{0}(\ctm)$ then any rank $k$ conformal Killing tensor is parallel, and if, moreover, $\qR$ is at some point of $M$ strictly negative on $S^{k}_{0}(\ctm)$, then any rank $k$ conformal Killing tensor is identically zero.
\end{enumerate}
\end{theorem}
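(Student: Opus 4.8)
The plan is to run the classical Bochner argument, taking as input the Weitzenböck identities of Corollary~\ref{weitzenbockcorollary}. For each of the two statements I would select the identity among \eqref{lapomsq}, \eqref{lapomdivlie}, \eqref{lapomliediv} whose curvature term $\qR(\om)$ enters with the sign matching the hypothesis on $\qR$, impose on $\om$ the equations defining the tensor class in question so that the extraneous differential terms either drop out or become squared norms, and then integrate over the compact manifold $M$, using that $\int_{M}\lap_{h}|\om|^{2}\,d\vol_{h}=0$. This leaves a sum of nonnegative terms equal to zero, forcing each to vanish, which yields $D\om=0$ and $\qR(\om)\equiv 0$; the strict-sign refinements then follow from an elementary pointwise argument.

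For part~\ref{bcc1}, let $\om\in\Ga(\symkt)\cap\ker\klie\cap\ker\div$ and use \eqref{lapomliediv}. Since $\div(\om)=0$ gives $\clie\div(\om)=0$ and $\klie(\om)=0$ gives $\kliea\klie(\om)=0$, the identity collapses to $\tfrac{1}{2}\lap_{h}|\om|^{2}=|D\om|^{2}+\qR(\om)$. Integrating over $M$ produces $0=\iln D\om\irn^{2}+\int_{M}\qR(\om)\,d\vol_{h}$, in which both contributions are pointwise nonnegative by hypothesis, so each vanishes identically; hence $\om$ is parallel and $\qR(\om)\equiv 0$. If $\qR$ is strictly positive at some $p\in M$, then $\qR(\om(p))=0$ forces $\om(p)=0$, and since a parallel tensor has constant norm on the connected manifold $M$, it follows that $\om\equiv 0$.

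For part~\ref{bcc2}, a rank $k$ conformal Killing tensor corresponds under index raising to $\om\in\ker\clie\cap\Ga(\symkt)$, so $\clie(\om)=0$ and therefore $\div\clie(\om)=0$. Here I would use \eqref{lapomsq}, which reduces to $\tfrac{1}{2}\lap_{h}|\om|^{2}=|D\om|^{2}-\tfrac{k(n+2(k-2))}{n+2(k-1)}\lb\om,\clie\div(\om)\ra-k\qR(\om)$. The surviving cross term is evaluated by the adjointness that defines $\clie$ as the formal adjoint of $-\div\circ\tf$: because $\om$ is trace-free one finds $\int_{M}\lb\om,\clie\div(\om)\ra\,d\vol_{h}=-\iln\div(\om)\irn^{2}$. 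Integration then gives $0=\iln D\om\irn^{2}+\tfrac{k(n+2(k-2))}{n+2(k-1)}\iln\div(\om)\irn^{2}-k\int_{M}\qR(\om)\,d\vol_{h}$, a sum of nonnegative terms once $\qR\leq 0$ (the coefficient $n+2(k-2)$ is positive since $n>2$). Thus $D\om=0$, and the refinement under strict negativity of $\qR$ at a point follows exactly as before.

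The computations themselves are routine; the point that needs care is the sign bookkeeping that makes the argument close. Choosing \eqref{lapomliediv} in the first case and \eqref{lapomsq} in the second is precisely what places the curvature term on the correct side while ensuring that the differential remainder left after imposing the defining equations assembles into a manifestly nonnegative expression. In the conformal Killing case this rests on the fact that the leftover $\clie\div$ term is not merely of one sign but equals $-\iln\div(\om)\irn^{2}$ after integration; confirming this via the adjoint relation between $\clie$ and $-\div\circ\tf$ is the main thing to verify, and it is what lets one bypass the $\kliea\klie$ term, which would otherwise enter \eqref{lapomdivlie} with an inconvenient sign and require an additional appeal to the norm decomposition \eqref{normdom}.
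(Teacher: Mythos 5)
Your proof is correct and follows essentially the same route as the paper: both arguments rest on the Weitzenböck identities of Corollary \ref{weitzenbockcorollary} integrated over the compact manifold, with the sign hypothesis on $\qR$ forcing each nonnegative term to vanish, and the strict-definiteness refinement handled by the same pointwise argument (a parallel tensor vanishing at one point vanishes identically). The only difference is organizational: the paper first assembles the single integrated identity \eqref{bigbochner}, valid for every $\om \in \Ga(\symkt)$, and then specializes it to the two cases, whereas you tailor the choice of Weitzenböck identity to each case; the adjointness relations you invoke (in particular $\ilp \om, \clie\div(\om)\irp = -\iln \div(\om)\irn^{2}$) are precisely the integrations by parts used to derive \eqref{bigbochner}.
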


\begin{proof}
For a compactly supported $\om \in \Ga(\symkt)$, integrating any of \eqref{lapomsq}-\eqref{lapomliediv} by parts against the Riemannian volume $\vol_{h}$ and simplifying the result using \eqref{normdom} yields
\begin{align}\label{bigbochner}
\begin{aligned}
\tfrac{2}{k+1}\iln|\klie(\om)\irn^{2} &  + \tfrac{(n+k-2)(n+2(k-2))}{(n+k-3)(n+2(k-1))}\iln\div(\om)\irn^{2} - \iln\clie(\om)\irn^{2} =   \tfrac{1}{k}\int_{M}\qR(\om) \,d\vol_{h}.
\end{aligned}	
\end{align}
The identity \eqref{bigbochner} generalizes the usual integrated Bochner identities for harmonic one-forms and conformal Killing vector fields. 
If $\om \in \ker \klie \cap \ker \div$ and $\qR \geq 0$ then \eqref{bigbochner} shows that $\om \in \ker \clie$ and from \eqref{normdom} it follows that $D\om = 0$. If moreover $\qR$ is somewhere positive then \eqref{lapomliediv} shows $\om = 0$. If $\om \in \ker \clie$ and $\qR \leq 0$ then \eqref{bigbochner} shows that $\om \in \ker \klie \cap \ker \div$ and from \eqref{normdom} it follows that $D\om = 0$. If, moreover, $\qR$ is somewhere negative then \eqref{lapomdivlie} shows $\om = 0$.
\end{proof}

\begin{remark}
A result very similar to \eqref{bcc1} of Theorem \ref{bochnerliedivtheorem} was obtained in \cite[Theorem $2$]{Stepanov}. The difference is that the operator $\op{\sR}$ and corresponding quadratic form $\qR$ considered in \cite{Stepanov} are slightly different (they differ by a term involving the Ricci curvature). The claim here is very slightly more general, but, when the curvature is assumed to have a definite sign, since this sign is inherited by the Ricci tensor, the ambit of application of the claims is the same. 
\end{remark}

\begin{corollary}\label{bochnercorollary}
Let $h$ be a Riemannian metric on a compact manifold $M$ of dimension $n > 2$.
\begin{enumerate}
\item\label{bcc1b}(Stepanov, \cite[Theorem $2$]{Stepanov}; see also \cite[Corollary $1$]{Shandra-Stepanov-Mikes}) If $h$ has nonnegative sectional curvature, then a trace-free Codazzi tensor $\om \in \Ga(\symkt) \cap \ker \klie \cap \ker \div$ is parallel. If, moreover, the sectional curvature is strictly positive at some point of $M$ then $\Ga(\symkt) \cap \ker \klie \cap \ker \div = \{0\}$. 
\item(Dairbekov-Sharafutdinov, \cite[Theorem $1.6$]{Dairbekov-Sharafutdinov}; see also \cite[Proposition $6.6$]{Heil-Moroianu-Semmelmann})\label{bcc2b} If $h$ has nonpositive sectional curvature, then a rank $k$ conformal Killing tensor is parallel, and if the sectional curvature is moreover strictly negative at some point of $M$, then a rank $k$ conformal Killing tensor is identically zero.
\end{enumerate}
\end{corollary}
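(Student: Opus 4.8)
The plan is to deduce both statements from Theorem \ref{bochnerliedivtheorem} by showing that the sign of the sectional curvature controls the sign of the quadratic form $\qR$ on $\symkt$. Precisely, I would establish the pointwise implications that nonnegative sectional curvature forces $\qR \geq 0$ on $S^{k}_{0}(\ctm)$, that nonpositive sectional curvature forces $\qR \leq 0$ on $S^{k}_{0}(\ctm)$, and that $\qR$ is strictly definite as a quadratic form at any point where the sectional curvature is strictly positive, respectively strictly negative. Granting this, part (\ref{bcc1b}) is immediate from Theorem \ref{bochnerliedivtheorem}(\ref{bcc1}) and part (\ref{bcc2b}) from Theorem \ref{bochnerliedivtheorem}(\ref{bcc2}), since the strict clauses there require only that $\qR$ be somewhere strictly definite.

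To obtain the pointwise sign I would fix a point, choose an $h$-orthonormal frame $\{e_{a}\}$, and rewrite $\op{\sR}$ in Weitzenböck-curvature form. Letting a skew-symmetric endomorphism act on symmetric tensors by derivation, write $(e_{a}\wedge e_{b})\cdot \om$ for the action of the rotation generator $e_{a}\wedge e_{b}$ on $\om \in \symkt$, and set $w_{ab} = (e_{a}\wedge e_{b})\cdot \om$. A direct comparison with \eqref{syom}, in which the $\rictr(\sR)$ term accounts for the contraction of two derivations and the factor $(1-k)$ for the Riemann term, identifies $\op{\sR}$ with the curvature endomorphism $\sum_{a<b,\,c<d}\sR_{abcd}\,(e_{a}\wedge e_{b})\cdot(e_{c}\wedge e_{d})\cdot$, so that, using the skew-adjointness of the derivation action,
\begin{align*}
\qR(\om) = \sum_{a<b,\,c<d}\sR_{abcd}\,\lb w_{ab}, w_{cd}\ra.
\end{align*}
The diagonal part of this sum, over $\{a,b\} = \{c,d\}$, equals $\sum_{a<b}\sR_{abab}\,|w_{ab}|^{2}$ and already carries the sign of the sectional curvatures.

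The hard part will be disposing of the off-diagonal terms with $\{a,b\}\neq\{c,d\}$, which involve the components $\sR_{abcd}$ not controlled by the sectional curvature and which are exactly what blocked the argument for $k > 2$ in \cite{Fox-ahs}. I would handle them by the regrouping used in \cite{Heil-Moroianu-Semmelmann} (and, for the Codazzi case, in \cite{Shandra-Stepanov-Mikes}): expanding $\lb w_{ab}, w_{cd}\ra$ in terms of the components of $\om$ and invoking the first Bianchi identity $\sR_{abcd} + \sR_{acdb} + \sR_{adbc} = 0$ together with the complete symmetry of $\om$, the off-diagonal contribution should cancel, leaving
\begin{align*}
\qR(\om) = \sum_{a<b}\sR_{abab}\,|w_{ab}|^{2}.
\end{align*}
The sign of this expression is that of the sectional curvatures, which gives $\qR \geq 0$ under nonnegative and $\qR \leq 0$ under nonpositive sectional curvature, matching the hypotheses of parts (\ref{bcc1}) and (\ref{bcc2}) of Theorem \ref{bochnerliedivtheorem}. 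Verifying this Bianchi cancellation uniformly in $k$ is the main obstacle; the $k=2$ case reduces to the diagonalizable situation treated by Berger--Ebin \cite{Berger-Ebin}, where choosing a frame diagonalizing $\om$ makes the off-diagonal terms vanish automatically, but for $k>2$ no such frame exists and the symmetry-plus-Bianchi argument is genuinely needed.

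Finally, for the strict clauses, at a point where every $\sR_{abab}$ has a strict sign the vanishing $\qR(\om) = 0$ forces $w_{ab} = 0$ for all $a < b$; thus $\om$ is annihilated by every rotation generator, hence is $\so(n)$-invariant, hence a polynomial in $h$, and therefore identically zero by trace-freeness. So $\qR$ is strictly definite there, and the corresponding strict conclusions of Theorem \ref{bochnerliedivtheorem} apply, giving the asserted vanishing. (The result for conformal Killing tensors is also obtainable by the independent method of \cite{Dairbekov-Sharafutdinov}.)
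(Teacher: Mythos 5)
Your reduction to Theorem \ref{bochnerliedivtheorem} --- show pointwise that the sign of the sectional curvature forces the same sign for $\qR$ on $S^{k}_{0}(\ctm)$, with strictness where the curvature is strict --- is exactly the paper's strategy. But the mechanism you propose for that pointwise claim is not just unverified, it is false: the identity $\qR(\om) = \sum_{a<b}\sR_{abab}|w_{ab}|^{2}$, with $w_{ab} = (e_{a}\wedge e_{b})\cdot\om$ in an \emph{arbitrary} orthonormal frame, fails, and no first-Bianchi regrouping cancels the off-diagonal terms. Test it at $k = 1$: by \eqref{syom} the term with coefficient $1-k$ drops out, so $\op{\sR}(\om)_{i} = \rictr(\sR)_{ip}\om^{p}$ and $\qR(\om) = \sR_{ij}\om^{i}\om^{j}$; on the other hand $|w_{ab}|^{2} = \om_{a}^{2} + \om_{b}^{2}$, so $\sum_{a<b}\sR_{abab}|w_{ab}|^{2} = \sum_{a}\left(\sum_{b}\sR_{abab}\right)\om_{a}^{2}$ is a quadratic form with no cross terms, while $\qR(\om)$ carries the off-diagonal Ricci contribution $\sum_{a\neq b}\sR_{ab}\om_{a}\om_{b}$, which survives in any frame not diagonalizing the Ricci tensor and which no symmetry of the curvature tensor removes. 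For $k=1$ one rescues the computation with a Ricci eigenframe, and for $k=2$ Berger--Ebin \cite{Berger-Ebin} use an $\om$-eigenframe; as you yourself observe, no analogous frame exists for $k>2$ --- but that observation cuts against your plan, because it shows the needed cancellation cannot be a frame-independent algebraic identity (were it one, it would hold in every frame already for $k=1$). Indeed, if your diagonal formula were true, the sign implication would be trivial for all $k$, and the difficulty for $k>2$ acknowledged both in the paper and in your own sketch would never have arisen.

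You also misdescribe the source you lean on: \cite{Heil-Moroianu-Semmelmann} do not regroup the curvature sum algebraically. Their Proposition $6.6$, which is what the paper's proof actually invokes, is an \emph{integration-in-the-fibers} argument: pairing $\om$ with $v^{\tensor k}$ over the unit sphere of each tangent space, they express $\qR(\om)$, up to positive constants, as a fiberwise integral of genuine sectional-curvature quantities of the form $\sR(v, X_{v}, v, X_{v})$, so the curvature sign hypothesis enters under an integral over the fiber rather than pointwise in frame indices; as the paper notes, the same argument works for either sign of the sectional curvature, and for trace-free Codazzi tensors one may instead follow \cite{Shandra-Stepanov-Mikes}, who reduce $k>2$ to the Berger--Ebin computation. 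Your closing step (strict definiteness because $w_{ab} = 0$ for all $a<b$ makes $\om$ invariant under $\so(n)$, hence a polynomial in $h$, hence zero by trace-freeness) is sound algebra, but it sits on top of the false identity; in the fiber-integral approach strictness comes instead from the vanishing of the integrand. To repair the proof, replace the Bianchi-cancellation step by the fiber integration (or a citation to it); everything else in your reduction then goes through as you describe.
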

\begin{proof}
Both claims follow from Theorem \ref{bochnerliedivtheorem} once it is known that a sign condition on the sectional curvature of $h$ implies the same sign condition for $\qR$ on $\Ga(\symkt)$. When $k = 2$ this follows from the proof of \cite[Proposition $6.1$]{Berger-Ebin}, but that argument (which is direct) does not extend straightforwardly to the case $k > 2$. For conformal Killing tensors, claim \eqref{bcc2} is \cite[Theorem $1.6$]{Dairbekov-Sharafutdinov}. A proof of the result of Dairbekov-Sharafutdinov for all $k$ based on Weitzenböck formulas as in Theorem \ref{bochnerliedivtheorem} was given as \cite[Proposition $6.6$]{Heil-Moroianu-Semmelmann}; they show via an integration in the fibers argument that the condition \eqref{bcc2} of Theorem \ref{bochnerliedivtheorem} follows from the nonpositivity of the sectional curvature. Their argument works equally well assuming nonnegativity of the sectional curvature, and combined with Theorem \ref{bochnerliedivtheorem}, this yields corollary \ref{bochnercorollary}. Alternatively, in the case of trace-free Codazzi tensors, the proof of \cite[Corollary $1$]{Shandra-Stepanov-Mikes} shows how to deduce the required nonnegativity for $k > 2$ from the Berger-Ebin argument.
\end{proof}

\begin{remark}
The $n = 2$ case of Corollary \ref{bochnercorollary} is proved in \cite[section $3$]{Fox-2dahs}. 
\end{remark}

Theorem \ref{completecodazzitheorem} refines \cite[Theorem $5$]{Stepanov-Tsyganok}, where it is shown there holds the same statement for all $p > q$ for some $q > 0$ that is not determined. The specific $q$ obtained here follows from the refined Kato inequality. 
\begin{theorem}\label{completecodazzitheorem}
Let $(M, h)$ be a complete Riemannian manifold of dimension $n > 2$ and let $k\geq 2$. If $\qR$ is nonnegative on $\symkt$ (in particular if $h$ has nonnegative sectional curvature) and a trace-free Codazzi tensor $\om \in \ker \klie \cap \ker \div \cap \Ga(\symkt)$ is in $L^{p}(M, \vol_{h})$ for $p > \tfrac{n-2}{n+k-2}$, then:
\begin{enumerate}
\item If $\vol_{h}(M) = \infty$ then $\om$ vanishes identically.
\item If $\vol_{h}(M)  < \infty$ then $\om$ is parallel and $\qR(\om)$ vanishes identically. In particular if at some point of $M$ $\qR$ is strictly positive on $\symkt$, then $\om$ vanishes identically. 
\end{enumerate}
\end{theorem}
\begin{proof}
Because $\qR$ is nonnegative on $\symkt$ , by Lemma \ref{swlemma}, $u = |\om|^{(n-2)/(n+k-2)}$ is a nonnegative subharmonic function on $M$. Because $h$ is complete, by a theorem of Yau \cite{Yau-function-theoretic}, if $u \in L^{q}(M, \vol_{h})$ for $q \in (1, \infty)$, then $u$ must be constant. Hence if $\om \in L^{p}(M, \vol_{h})$ for $p > \tfrac{n-2}{n+k-2}$, then $|\om|$ is constant. If $\vol_{h}(M)  = \infty$ then no constant function is in $L^{p}(\vol_{h})$ for any $p > 0$, so $\om$ vanishes identically. Suppose $\vol_{h}(M) < \infty$. By \eqref{lapomliediv}, $\qR(\om) = 0$ and $D\om = 0$.
\end{proof}

The result of Theorem \ref{completecodazzitheorem} is sharp in the following sense. For $k \geq 2$, on flat Euclidean space let $P$ be a harmonic polynomial homogeneous of degree $k$. Then $\om = D^{(k)}P$ is a parallel trace-free Codazzi tensor. Because it is parallel it descends to any flat torus. Evidently in this case $\om$ is in $L^{p}$ for all $p > 0$.

\subsection{Bounding norms of Kulkarni-Nomizu products}\label{boundsection}
What follows obtains estimates of the quantities $\skmax_{n, k}$ and $\skmin_{n, k}$ defined in \eqref{skdefined}.

A tensor $\om \in S^{k}_{0}\std$ is \emph{critical} if it is a critical point of the restriction to the unit sphere $\{\om \in S^{k}_{0}\std: |\om|^{2} = 1\}$ of the expression \eqref{quadom}. By Lemma \ref{criticalomlemma}, $\skmax_{n, k}$ and $\skmin_{n, k}$ are eigenvalues of the operator $ \op{\om \kwedge \om}$ determined by a critical $\om$.
\begin{lemma}\label{criticalomlemma}
A critical $\om \in S^{k}_{0}\std$ satisfies $\op{\om \kwedge \om}(\om) = \theta \om$ for some $\theta \in \rea$. 
\end{lemma}

\begin{proof}
Consider a one-parameter family $\om(t) \in S^{k}_{0}\std$ and let $\dot{\om} = \tfrac{d}{dt}\om$. By \eqref{qyalbe} applied twice,
\begin{align}
\begin{aligned}
\tfrac{d}{dt}\lb \op{\om \kwedge \om}(\om), \om \ra & = \tfrac{d}{dt}\left( \binom{k}{2}|\om \kwedge \om|^{2} + k|\rictr(\om \kwedge \om)|^{2}\right)\\
& = 4\left( \binom{k}{2}\lb \om \kwedge \om, \om \kwedge \dot{\om}\ra + k\lb \rictr(\om \kwedge \om), \rictr(\om \kwedge \dot{\om}\ra\right)= 4\lb \op{\om \kwedge \om}(\om), \dot{\om} \ra,
\end{aligned}
\end{align}
while $\tfrac{d}{dt}|\om|^{4} = 4|\om|^{2}\lb \om, \dot{\om}\ra$. The claim follows.
\end{proof}

The entire $O(n)$ orbit of a critical $\om$ comprises critical points. It would be interesting to characterize these critical orbits algebraically.

The $k  = 2$ case is particularly simple.

\begin{lemma}\label{2dsymlemma}
Let $(\ste, h)$ be a Euclidean vector space of dimension $n \geq 2$. Any $\om \in S^{2}_{0}\std$ satisfies
\begin{align}
\label{lijklnormbk2b}
&|\om|^{4}  = \tfrac{1}{2}\lb \op{\om \kwedge \om}(\om), \om \ra=  \tfrac{1}{2}|\om\kwedge \om|^{2} + |\rictr(\om\kwedge \om)|^{2} .
\end{align}
In particular $\skmax_{n, 2} = 1 = \skmin_{n, 2}$ for all $n \geq 2$.
\end{lemma}

\begin{proof}
A straightforward computation using the definition of $\op{\om \kwedge \om}$ shows that $\op{\om \kwedge \om}(\om) = 2|\om|^{2} \om$. With \eqref{qyalbe} this yields \eqref{lijklnormbk2b}.
\end{proof}

Let $(\ste, h)$ be a Euclidean vector space of dimension $n \geq 2$. A trace-free symmetric tensor $\om \in S^{k}_{0}\std$ is \emph{projectively flat} if $\om \kwedge \om$ is a multiple of $h \kwedge h$. 

\begin{lemma}\label{lowerkwedgelemma}
Let $(\ste, h)$ be a Euclidean vector space of dimension $n \geq 3$. For $\om \in S^{k}_{0}\std$,
\begin{align}\label{loweromtriple}
\tfrac{1}{k}\lb \op{\om \kwedge \om}(\om), \om \ra =\tfrac{k-1}{2}|\om \kwedge \om|^{2} + |\rictr(\om \kwedge \om)|^{2} \geq \tfrac{n+k-2}{n(n-1)}|\om|^{4},
\end{align}
with equality if and only if $\om$ is projectively flat. In particular, $\skmin_{n, k} \geq  \tfrac{n+k-2}{n(n-1)}$. 
\end{lemma}
\begin{proof}
By \eqref{tfweyl}, for $\sY \in \mcurv(\std)$, 
\begin{align}\label{tfweylnorm}
\begin{aligned}
|\sY|^{2}&= |\tf \sY|^{2}+ \tfrac{4}{n-2}|\tf \rictr(\sY)|^{2}+ \tfrac{2}{n(n-1)}(\scal(\sY))^{2}.
\end{aligned}
\end{align}
By \eqref{tfweylnorm}, for $\om \in S^{k}_{0}\std$ there holds
\begin{align}\label{tfweylomom}
\begin{aligned}
|\om \kwedge \om|^{2}&=  |\tf (\om \kwedge \om)|^{2}+ \tfrac{4}{n-2}|\tf \rictr(\om \kwedge \om)|^{2} + \tfrac{2}{n(n-1)}|\om|^{4}_{h}.
\end{aligned}
\end{align}
From \eqref{tfweylomom} and $|\rictr(\om \kwedge \om)|^{2} = |\tf \rictr(\om \kwedge \om)|^{2} + n^{-1}|\om|^{4}$ there follows
\begin{align}\label{tfomom}
\tfrac{k-1}{2}|\om \kwedge \om|^{2} + |\rictr(\om \kwedge \om)|^{2} = \tfrac{k-1}{2}|\tf (\om \kwedge \om)|^{2}+ \tfrac{n+2(k-2)}{n-2}|\tf \rictr(\om \kwedge \om)|^{2} + \tfrac{n+k-2}{n(n-1)}|\om|^{4},
\end{align}
for $\om \in S^{k}_{0}\std$. The equality \eqref{tfomom} implies \eqref{loweromtriple} and equality holds in \eqref{loweromtriple} if and only if both $\tf (\om \kwedge \om) = 0$ and $\tf \rictr(\om \kwedge \om) =0$, which by \eqref{tfweyl} imply that $\om \kwedge \om$ is a multiple of $h \kwedge h$. 
\end{proof}

Note that a priori it is not evident that there exist nonzero projectively flat elements of $S^{k}_{0}\std$. Examples \ref{k2pfexample}-\ref{k4pfexample} address this for $k \in \{2, 3, 4\}$.

\begin{example}\label{k2pfexample}
By Lemma \ref{2dsymlemma}, $\skmin_{n, 2} = 1$ for $n \geq 2$, so by Lemma \ref{lowerkwedgelemma} no nonzero element of $S^{2}_{0}\std$ is projectively flat provided $n = \dim \ste \geq 3$. A direct proof of this can be given as follows. Suppose $\om \in S^{2}_{0}\std$ were projectively flat so that there be $\ka \in \rea$ such that $\om \kwedge \om = \ka h \kwedge h$. Let $\{e_{1}, \dots, e_{n}\}$ be an $h$-orthonormal basis of $\ste$ with respect to which $\om$ is diagonal.  Let $w_{i} = \om(e_{i}, e_{i})$. Then $w_{i}w_{j} = (\om \kwedge \om)(e_{i}, e_{j}, e_{i}, e_{j}) = \ka$ for all $1 \leq i \neq j \leq n$. Because $\om$ is trace free, $0 = w_{j}\sum_{i = 1}^{n}w_{i} =w_{i}^{2} + (n-1)\ka$, so that $0 \geq \ka = w_{i}w_{j}$ and $w_{i}^{2} = (1-n)\ka$. Were $\ka < 0$ then, for distinct $i$ and $j$, $w_{i}$ and $w_{j}$ would be nonzero with opposite signs but equal squares. This is impossible if $n > 2$.
\end{example}

\begin{example}\label{k3pfexample}
It follows from \cite[Theorem $1.10$]{Fox-cubicpoly} that, when $n = \dim \ste > 1$, in $S^{3}_{0}\std$ there is a unique $CO(h)$ orbit comprising nonzero projectively flat tensors. An example of a nonzero projectively flat tensor is as follows. Let $\{e_{1}, \dots, e_{n}\}$ be a basis of $\ste$. 
Define $h \in S^{2}\std$ by $h(e_{i}, e_{i}) = n$ and $h(e_{i}, e_{j}) = -1$ for $i \neq j$ and extending bilinearly.
By construction the Gram matrix in the given basis is positive definite, so $h$ is a Euclidean metric.
Define a commutative nonassociative product on $\ste$ by $e_{i}\mlt e_{i} = (n-1)e_{i}$ and $e_{i}\mlt e_{j} = -e_{i} - e_{j}$ for $i \neq j$ and extending bilinearly. Define $\om \in \tensor^{3}\std$ by $\om(x, y, z) = h(x\mlt y, z)$, so that $\om(e_{i}, e_{i}, e_{i}) = n(n-1)$, $\om(e_{i}, e_{i}, e_{j}) = -(n-1)$, and $\om(e_{i}, e_{j}, e_{k}) = 2$ for distinct indices $i$, $j$, and $k$. It is straightforward to check that $\om$ is  completely symmetric and $h$-trace-free so that $\om \in S^{3}_{0}\std$. Define $L:\ste \to \eno(\ste)$ by $L(x)y = x\mlt y$. The complete symmetry of $\om$ is equivalent to $h(L(x)u, v)) = h(u, L(x)v)$. 
It can be checked that $[L(a), L(b)]c = h(a, c)b - h(b, c)a$. It follows that
\begin{align}
\begin{aligned}
(\om \kwedge \om)(a, b, c, d) &= h(c\mlt a, b \mlt d) - h(c \mlt b, a \mlt d) = h([L(b), L(a)]c, d)\\
&= h(b, c)h(a, d) - h(a, c)h(b, d) = -(h\kwedge h)(a, b, c, d),
\end{aligned}
\end{align}
which shows that $\om \kwedge \om = - h\kwedge h$ so that $\om$ is projectively flat. There follow $\rictr(\om \kwedge \om) = (n-1)h$, $|\om|^{2} = \scal(\om\kwedge \om) = n(n-1)$, $|\om \kwedge \om|^{2} = 2n(n-1)$, and $|\rictr(\om \kwedge \om) |^{2} = n(n-1)^{2}$, from which it follows that equality holds in \eqref{loweromtriple}. This shows that $\skmin_{n, 3} = \tfrac{n+1}{n(n-1)}$ when $n >1$.
\end{example}

\begin{example}\label{k4pfexample}
Here is an example of a projectively flat tensor in $S^{4}_{0}\std$ when $n = \dim \ste > 2$. This shows that $\skmin_{n, 4} = \tfrac{n+2}{n(n-1)}$ when $n > 2$.  Let $\{e_{1}, \dots, e_{n}\}$ be a basis of $\ste$. Define a Euclidean metric $h$ as in Example \eqref{k2pfexample}. In what follows, $i$, $j$, $k$, and $l$ denote distinct indices from $\{1, \dots, n\}$. Define a commutative ternary product $[\dum, \dum, \dum]:\ste \times \ste \times \ste \to \ste$ by extending multilinearly
\begin{align}
&[e_{i}, e_{i}, e_{i}] = (n-1)(n-2)e_{i},&& [e_{i}, e_{i}, e_{j}] = -(n-2)(e_{i} + e_{j}), && [e_{i}, e_{j}, e_{k}] = 2(e_{i} + e_{j} + e_{k}).
\end{align}
Define $\om \in\tensor^{4}\std$ by $\om(a, b, c, d) = h([a, b, c], d)$ for $a, b, c, d\in \ste$, so that $\om(e_{i}, e_{i}, e_{i}, e_{i}) = n(n-1)(n-2)$, $\om(e_{i}, e_{i}, e_{i}, e_{j}) = -(n-1)(n-2) = \om(e_{i}, e_{i}, e_{j}, e_{j})$, $ \om(e_{i}, e_{i}, e_{j}, e_{k}) = 2(n-2) $, and $ \om(e_{i}, e_{j}, e_{k}, e_{l})= -6$. It is straightforward to check that $\om$ is completely symmetric and $h$-trace-free, so $\om \in S^{4}_{0}\std$. Define $L: \ste \times \ste \to \eno(\ste)$ by $L(a, b)c = [a, b, c]$. The complete symmetry of $\om$ is equivalent to $h(L(x, y)u, v) = h(u, L(x, y)v)$. Using $(\om \kwedge \om)(a, b, c, d) = \tr (L(c, a)L(b, d) - L(c, b)L(a, d))$ it is straightforward to calculate that $\om \kwedge \om = \tfrac{(n+2)(n-2)}{n+1}h \kwedge h$, showing that $\om$ is projectively flat. Whether every projectively flat element of $S^{4}_{0}\std$ is in the $CO(h)$ orbit of the $\om$ constructed here has not been addressed, although it seems likely this is the case.
\end{example}

For $k > 4$, I do not know if there exist projectively flat tensors in $S^{k}_{0}\std$. There is an evident pattern in the construction of such tensors in the $k = 3$ and $k = 4$ cases that it appears probable continues for $k > 4$, but the direct computations used to check these examples need to be replaced by some more conceptual argument involving $n$-ary operations, and developing this in the detail necessary would not be germane here.

\begin{lemma}\label{k3normlemma}
Let $(\ste, h)$ be a Euclidean vector space of dimension $n \geq 2$. For $\om \in S^{3}_{0}\std$,
\begin{align}
\label{lijklnormk3}
&\tfrac{2n-1}{n}|\om|^{4} \geq |\om \kwedge \om|^{2} + |\rictr(\om \kwedge \om)|^{2}.
\end{align}
so that $\skmax_{n, 3} \leq \frac{2n-1}{n}$.
\end{lemma}

\begin{proof}
Let $\{e(1), \dots, e(n)\}$ be an $h$-orthonormal basis of $\ste$. In terms of the endomorphisms $\om(i)_{j}\,^{k} = e(i)^{p}\om_{pj}\,^{k} \in \eno(\ste)$, $1 \leq i \leq n$, 
\begin{align}\label{co0}
\begin{aligned}
[\om(i), \om(j)]_{kl} &= \om(i)_{pl}\om(j)_{k}\,^{p} - \om(j)_{pl}\om(i)_{k}\,^{p} = e(i)^{a}e(j)^{b}(\om_{pla}\om_{kb}\,^{p} - \om_{plb}\om_{ka}\,^{p})\\
& = -2e(i)^{a}e(j)^{b}\om_{k[a}\,^{p}\om_{b]lp} = -e(i)^{a}e(j)^{b}(\om \kwedge \om)_{abkl}.
\end{aligned}
\end{align}
(It is because of the last two equalities of \eqref{co0} that this proof works only for the $k = 3$ case.)
By \cite[Lemma $1$]{Chern-Docarmo-Kobayashi}, for symmetric endomorphisms $A_{i}\,^{j}$ and $B_{i}\,^{j}$ of $\ste$ there holds $|[A, B]|^{2}\leq 2|A|^{2}|B|^{2}$, and applied with \eqref{co0} this yields
\begin{align}\label{co1}
\begin{aligned}
|\om \kwedge \om|^{2}& = \sum_{i= 1}^{n}\sum_{j = 1}^{n}| [\om(i), \om(j)]|^{2}  = \sum_{i= 1}^{n}\sum_{j \neq i}| [\om(i), \om(j)]|^{2}\leq 2\sum_{i= 1}^{n}\sum_{j\neq i}|\om(i)|^{2}|\om(j)|^{2}.
\end{aligned}
\end{align}
There holds $\rictr(\om \kwedge \om)_{ab}e(i)^{a}e(j)^{b}  = \lb \om(i), \om(j)\ra$. Because $\rictr(\om \kwedge \om)$ is symmetric, the $h$-orthonormal basis $\{e(1), \dots, e(n)\}$ can be chosen to be also orthogonal with respect to $\rictr(\om\kwedge \om)$, which entails that $ \lb \om(i), \om(j)\ra = 0$ if $i \neq j$. In this case,
\begin{align}\label{co5}
|\rictr(\om \kwedge \om)|^{2} & = \sum_{i = 1}^{n}\sum_{j = 1}^{n}\lb \om(i), \om(j)\ra^{2} = \sum_{i = 1}^{n}|\om(i)|^{4}.
\end{align}
Combining \eqref{co1} and \eqref{co5} yields
\begin{align}\label{co6}
\begin{aligned}
|\om \kwedge \om|^{2} & + |\rictr(\om \kwedge \om)|^{2} \leq 2\sum_{i= 1}^{n}\sum_{j\neq i}|\om(i)|^{2}|\om(j)|^{2} + \sum_{i = 1}^{n}|\om(i)|^{4}.
\end{aligned}
\end{align}
There hold
\begin{align}\label{co2}
\begin{aligned}
\sum_{i = 1}^{n}|\om(i)|^{4} & = \left(\sum_{i = 1}^{n}|\om(i)|^{2}\right)^{2} - \sum_{i= 1}^{n}\sum_{j\neq i}|\om(i)|^{2}|\om(j)|^{2}= |\om|^{4} - \sum_{i= 1}^{n}\sum_{j\neq i}|\om(i)|^{2}|\om(j)|^{2},\\
\sum_{i = 1}^{n}|\om(i)|^{4} & =\tfrac{1}{2(n-1)}\sum_{i = 1}^{n}\sum_{j \neq i}\left(|\om(i)|^{2} - |\om(j)|^{2}\right)^{2} + \tfrac{1}{n-1}\sum_{i = 1}^{n}\sum_{j \neq i}|\om(i)|^{2}|\om(j)|^{2}.
\end{aligned}
\end{align}
The observations \eqref{co2}, which are key to the proof, appear in some equivalent form in the proof of \cite[Theorem $4.2$]{Chen-Ogiue}.
Summing $\tfrac{2n-1}{n}$ times the first equation of \eqref{co2} with $\tfrac{1-n}{n}$ times the second equation of \eqref{co2} yields
\begin{align}\label{co3}
\sum_{i = 1}^{n}|\om(i)|^{4} + 2 \sum_{i= 1}^{n}\sum_{j\neq i}|\om(i)|^{2}|\om(j)|^{2}= \tfrac{2n-1}{n} |\om|^{4} - \tfrac{1}{2n}\sum_{i = 1}^{n}\sum_{j \neq i}\left(|\om(i)|^{2} - |\om(j)|^{2}\right)^{2}.
\end{align}
Combining \eqref{co6} and \eqref{co3} yields
\begin{align}\label{co7}
\begin{aligned}
|\om \kwedge \om|^{2} & + |\rictr(\om \kwedge \om)|^{2} \leq 2\sum_{i= 1}^{n}\sum_{j\neq i}|\om(i)|^{2}|\om(j)|^{2} + \sum_{i = 1}^{n}|\om(i)|^{4}\\
& = \tfrac{2n-1}{n} |\om|^{4} - \tfrac{1}{2n}\sum_{i = 1}^{n}\sum_{j \neq i}\left(|\om(i)|^{2} - |\om(j)|^{2}\right)^{2}\leq  \tfrac{2n-1}{n} |\om|^{4},
\end{aligned}
\end{align}
which proves \eqref{lijklnormk3}. 
\end{proof}

I do not know the characterization of tensors saturating the bound \eqref{lijklnormk3}.


\begin{lemma}
Let $(\ste, h)$ be a Euclidean vector space of dimension $n \geq 2$. For $k \geq 2$ and $\om \in S^{k}_{0}\std$ there hold
\begin{align}
\label{lijnorm}
&\tfrac{n+k-3}{n + 2(k-2)}|\om|^{4}\geq |\rictr(\om \kwedge \om)|^{2}\geq \tfrac{1}{n}|\om|^{4},&\\
\label{lijnormb}
&\tfrac{(n-2)(n+k-2)}{n(n + 2(k-2))}|\om|^{4}\geq |\tf\rictr(\om \kwedge \om)|^{2}.
\end{align}
\end{lemma}
\begin{proof}
For $x_{i} \in \std$, the nonnegativity of the squared norm of 
\begin{align}
x_{[i}\om_{j]i_{1}\dots i_{k-1}} - \tfrac{1}{n+k-3}\sum_{s = 1}^{k-1}h_{i_{s}[i}\om_{j]i_{1}\dots \hat{i}_{s} \dots i_{k-1}p}x^{p},
\end{align}
(where $\hat{i}_{s}$ denotes the omission of the index $i_{s}$) yields
\begin{align}
\begin{aligned}
0 &\leq 2x^{i}\om^{ji_{1}\dots i_{k-1}}\left(x_{[i}\om_{j]i_{1}\dots i_{k-1}} - \tfrac{1}{n+k-3}\sum_{s = 1}^{k-1} h_{i_{s}[i}\om_{j]i_{1}\dots \hat{i}_{s} \dots i_{k-1}p} x^{p} \right) \\
&= |x|^{2}|\om|^{2}- |\imt(x)\om|^{2} - \tfrac{k-1}{n+k-3}|\imt(x)\om|^{2}= |x|^{2}|\om|^{2}- \tfrac{n + 2(k-2)}{n+k-3}|\imt(x)\om|^{2},
\end{aligned}
\end{align}
so that
\begin{align}
\rictr(\om \kwedge \om)_{ij}x^{i}x^{j} = |\imt(x)\om|^{2}\leq \tfrac{n+k-3}{n+2(k-2)}|x|^{2}|\om|^{2}.
\end{align}
This shows $\tfrac{n+k-3}{n+2(k-2)}|x|^{2}|\om|^{2}h_{ij} - \rictr(\om \kwedge \om)_{ij}$ is positive semidefinite. Because $\rictr(\om \kwedge \om)_{ij}$ is also positive semidefinite and the endomorphisms $\tfrac{n+k-3}{n+2(k-2)}|\om|^{2}\delta_{i}\,^{j} - \rictr(\om \kwedge\om)_{i}\,^{j}$ and $\rictr(\om \kwedge \om)_{i}\,^{j}$ commute, contracting with $\rictr(\om \kwedge \om)^{ij}$ yields
\begin{align}\label{tfricnorm}
\tfrac{n+k-3}{n+2(k-2)}|\om|^{4}\geq |\rictr(\om \kwedge \om)|^{2}= |\tf\rictr(\om\kwedge \om)|^{2}+ \tfrac{1}{n}|\om|^{4}_{h},
\end{align}
from which the inequalities \eqref{lijnorm} and \eqref{lijnormb} follow.
\end{proof}

\begin{lemma}
Let $(\ste, h)$ be a Euclidean vector space of dimension $n \geq 2$. For $k \geq 2$ and $\om \in S^{k}_{0}\std$ there holds
\begin{align}
\label{lijklnormupper} |\om \kwedge \om|^{2} \leq 4|\om|^{4}.
\end{align}
\end{lemma}
\begin{proof}
For $\tau_{ij} \in S^{2}_{0}\std$, 
\begin{align}
\begin{aligned}
0 &\leq (\tau_{ij}\om_{klQ} - \tau_{kl}\om_{ijqQ})(\tau^{ij}\om^{klQ} - \tau^{kl}\om^{ijQ})
 = |\tau|^{2}|\om|^{2}- |\imt(\tau)\om|^{2} \\
 &= \tau^{ij}\tau^{kl}\left( |\om|^{2}h_{k(i}h_{j)l} - \om_{ijQ}\om_{kl}\,^{Q}\right),
\end{aligned}
\end{align}
where $\imt(\tau)\om$ is as in \eqref{imtdefined} and $Q$ stands for $q_{1}\dots q_{k-2}$. This means that the symmetric quadratic form defined on $S^{2}_{0}\std$ by contracting $\tau^{ij}\tau^{kl}$ with the tensor $|\om|^{2}h_{k(i}h_{j)l} - \om_{ijQ}\om_{kl}\,^{Q}$ is positive semidefinite. Since the quadratic form defined on $S^{2}_{0}\std$ by contracting $\tau^{ij}\tau^{kl}$ with $\om_{ijQ}\om_{kl}\,^{Q}$ is also positive semidefinite and the corresponding endomorphisms commute it follows that
\begin{align}\label{om4}
0 \leq (|\om|^{2}h_{k(i}h_{j)l} - \om_{ijQ}\om_{kl}\,^{Q})\om^{ij}\,_{P}\om^{kl}\,^{P} = |\om|^{4} -  \om_{ijQ}\om_{kl}\,^{Q}\om^{ij}\,_{P}\om^{kl}\,^{P},
\end{align}
(where $P$ stands for $p_{1}\dots p_{k-2}$). 
Define $\mt = \om_{j}\,^{kQ}\om_{k}\,^{iP}\om_{i}\,^{l}\,_{Q}\om_{l}\,^{j}\,_{P}$. 
From 
\begin{align}
\begin{aligned}
0 &\leq 2\om_{k(i}\,^{P}\om_{j)lP}\om^{k(i}\,_{Q}\om^{j)lQ} = \om_{kiP}\om_{jl}\,^{P}\om^{ki}\,_{Q}\om^{jlQ} + \mt
\end{aligned}
\end{align}
it follows that
\begin{align}\label{lijkllijkl}
\begin{aligned}
|\om \kwedge \om|^{2}&= 2\om_{kiP}\om_{jl}\,^{P}\om^{ki}\,_{Q}\om^{jlQ} - 2\mt \leq 4\om_{kiP}\om_{jl}\,^{P}\om^{ki}\,_{Q}\om^{jlQ} \leq 4|\om|^{4},
\end{aligned}
\end{align}
the last inequality by \eqref{om4}. This shows \eqref{lijklnormupper}. 
\end{proof}

\begin{corollary}
Let $(\ste, h)$ be a Euclidean vector space of dimension $n \geq 2$. For $k \geq 2$ and $\om \in S^{k}_{0}\std$ there hold
\begin{align}
\label{lijklnormb}
&\left(4 - \tfrac{2}{n(n-1)}\right)|\om|^{4} \geq |\tf(\om \kwedge \om)|^{2}+ \tfrac{4}{n-2}|\mr{\rictr(\om\kwedge \om)}|^{2} \geq |\tf(\om \kwedge \om)|^{2}.
\end{align}
\end{corollary}
\begin{proof}
From \eqref{tfweylnorm} there follows
\begin{align}\label{lijklnorm2}
\begin{aligned}
|\om \kwedge \om|^{2}&=  |\tf (\om \kwedge \om)|^{2}+ \tfrac{4}{n-2}|\mr{\rictr(\om\kwedge \om)}|^{2} + \tfrac{2}{n(n-1)}|\om|^{4}_{h}\geq \tfrac{2}{n(n-1)}|\om|^{4}.
\end{aligned}
\end{align}
Combining \eqref{lijklnormupper} with \eqref{lijklnorm2} yields \eqref{lijklnormb}.
\end{proof}


\begin{corollary}\label{k4plusnormcorollary}
Let $(\ste, h)$ be a Euclidean vector space of dimension $n \geq 2$. For $k \geq 4$ and $\om \in S^{k}_{0}\std$ there holds
\begin{align}\label{sk4plusest}
\tfrac{k-1}{2}|\om \kwedge \om|^{2} + |\rictr(\om \kwedge \om)|^{2} &\leq\left(1 + \tfrac{(2n+1)(k-1)}{n}\right)|\om|^{4}.
\end{align}
In particular, $\skmax_{n, k} \leq 1 + \tfrac{(2n+1)(k-1)}{n}$.
\end{corollary}
\begin{proof}
By \eqref{lijklnorm2}, \eqref{lijnormb}, and \eqref{lijklnormb},
\begin{align}\label{skmaxest}
\begin{aligned}
\tfrac{k-1}{2}|\om \kwedge \om|^{2} &+ |\rictr(\om \kwedge \om)|^{2} =  \tfrac{k-1}{2}|\tf(\om \kwedge \om)|^{2} + \tfrac{n+2(k-2)}{n-2}|\tf \rictr(\om\kwedge  \om)|^{2} + \tfrac{n+k-2}{n(n-1)}|\om|^{4}\\
&\leq \left(\tfrac{k-1}{2}\left(4 - \tfrac{2}{n(n-1)}\right) + \tfrac{n+k-2}{n} + \tfrac{n+k-2}{n(n-1)}\right) |\om|^{4}\\
&=\left( (k-1)\left(2 - \tfrac{1}{n(n-1)}\right) + \tfrac{n+k-2}{n-1}\right)|\om|^{4} =\left(1 + \tfrac{(2n+1)(k-1)}{n}\right)|\om|^{4},
\end{aligned}
\end{align}
which shows the claim.
\end{proof}

\subsection{{Proofs of Theorems \ref{scalarcurvaturetheorem} and \ref{simonstheorem}}}\label{proofsection}
Theorem \ref{cyestimatetheorem} is needed in the proof of Theorem \ref{scalarcurvaturetheorem}. Its statement can be found in the form given here, in the context of Hermitian manifolds, as \cite[Theorem $4.1$]{Tosatti-schwarz}.

\begin{theorem}[{S.~Y. Cheng and S.~T. Yau, \cite[Corollary $1$ on p. $857$]{Cheng-Yau-affinehyperspheresI}, \cite[Corollary to Theorem $8$ on p. $353$]{Cheng-Yau-differentialequations}}]\label{cyestimatetheorem}
Let $(M, g)$ be a complete $n$-dimensional Riemannian manifold with Ricci curvature bounded from below by $-\ka(n-1)g$ for a real constant $\ka \geq 0$. Suppose $u \in C^{2}(M)$ is nonnegative and not identically $0$ and satisfies $\lap u \geq Bu^{1 + \si} - Au$ for constants $B > 0$, $\si > 0$, and $A \in \rea$. 
If $A \leq 0$, then $u$ is identically zero, while, if $A > 0$, there holds $\sup_{M}u \leq |A/B|^{1/\si}$.
\end{theorem}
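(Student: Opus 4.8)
The plan is to reduce the theorem to the single pointwise estimate $B\,u(o)^{\si}\le A$ at an arbitrary point $o\in M$. Granting this, both conclusions are immediate: if $A\le 0$ then $u(o)^{\si}\le A/B\le 0$ forces $u(o)=0$, and as $o$ is arbitrary $u\equiv 0$; if $A>0$ then $u(o)\le (A/B)^{1/\si}$ for every $o$, so $\sup_{M}u\le (A/B)^{1/\si}=|A/B|^{1/\si}$. Thus it suffices to fix $o$ with $u(o)>0$ (if there is no such point then $u\equiv 0$ and nothing is to be proved) and to derive $B\,u(o)^{\si}\le A$.

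I would establish this estimate by a localized maximum-principle argument on geodesic balls, following Calabi and Cheng--Yau. Set $r(x)=d(x,o)$ and fix $R>0$; by completeness and the Hopf--Rinow theorem the set $\{r\le R\}$ is compact. On it consider the auxiliary function
\[
\psi=(R^{2}-r^{2})^{\beta}u,\qquad \beta=\tfrac{2}{\si},
\]
the exponent being chosen for the reason that appears below. Since $\psi\ge 0$, vanishes on $\{r=R\}$, and is positive at $o$, it attains an interior maximum at some $x_{0}$ with $u(x_{0})>0$. Applying $\nabla\log\psi(x_{0})=0$ and $\lap\log\psi(x_{0})\le 0$, inserting $\lap u/u\ge B\,u^{\si}-A$ and $|\nabla r|=1$, and writing $s=R^{2}-r(x_{0})^{2}$, a direct computation gives
\[
B\,u(x_{0})^{\si}\le A+\frac{2\beta\,(r\lap r+1)}{s}+\frac{4(\beta+\beta^{2})\,r^{2}}{s^{2}}.
\]
Here the curvature hypothesis enters through the Laplacian comparison theorem: $\ric\ge -(n-1)\ka\,g$ yields $r\lap r\le (n-1)\sqrt{\ka}\,r\coth(\sqrt{\ka}\,r)\le (n-1)(1+\sqrt{\ka}\,R)$ on $\{r\le R\}$, so the two error terms grow at most linearly in $R$.

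To close the argument I would exploit the maximality of $\psi$ once more: from $\psi(x_{0})\ge\psi(o)$ one gets $u(x_{0})\ge (R^{2}/s)^{\beta}u(o)$, hence $u(x_{0})^{\si}\ge (R^{2}/s)^{\beta\si}u(o)^{\si}=R^{4}s^{-2}u(o)^{\si}$, where the identity $\beta\si=2$ is exactly what makes the powers of $s$ match. Substituting this, multiplying through by $s^{2}$, and using $r^{2}\le R^{2}$ and $s\le R^{2}$ gives
\[
B\,R^{4}u(o)^{\si}\le A\,s^{2}+2\beta\,(r\lap r+1)\,s+4(\beta+\beta^{2})R^{2}.
\]
Dividing by $R^{4}$ and letting $R\to\infty$, the last two terms tend to $0$ (the linear growth of $r\lap r$ being harmless after division by $R^{4}$), while $A\,s^{2}/R^{4}\le A$ when $A\ge 0$ and $A\,s^{2}/R^{4}\le 0$ when $A\le 0$; in either case $B\,u(o)^{\si}\le A$, as required.

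The one genuine obstacle is that $r$ is only Lipschitz and fails to be smooth on the cut locus of $o$, so the evaluation of $\lap\log\psi$ at $x_{0}$ is not literally justified when $x_{0}$ is a cut point. I would resolve this in the standard way by Calabi's trick: near $x_{0}$ replace $r$ by a smooth upper barrier (the distance to a point pushed slightly along the minimizing geodesic from $o$ to $x_{0}$), for which the Laplacian comparison inequality holds classically and which touches $r$ from above at $x_{0}$; the maximum-principle inequality is preserved because $\psi$ is then dominated near $x_{0}$ by the corresponding smooth barrier expression. Apart from this cut-locus regularization the proof is elementary, so that step is where the care is required.
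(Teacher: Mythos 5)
The paper gives no proof of this theorem: it is quoted as a known result of Cheng and Yau (in the formulation appearing in Tosatti's paper), so there is nothing internal to compare your argument against. What you have written is a correct reconstruction of the standard Calabi--Cheng--Yau localization argument, which is indeed how the cited result is proved: the exponent $\beta=2/\sigma$, the second use of the maximality of $\psi$ via $\psi(x_0)\ge\psi(o)$ so that the powers of $s=R^2-r(x_0)^2$ cancel, the comparison bound $r\Delta r\le (n-1)(1+\sqrt{\kappa}R)$, and Calabi's upper-barrier trick at cut points are exactly the right ingredients, and your displayed inequalities check out. Two small corrections are in order. First, in the cut-locus paragraph the domination goes the other way: since the barrier satisfies $r_\epsilon\ge r$ with equality at $x_0$, the smooth function $\psi_\epsilon=(R^2-r_\epsilon^2)^\beta u$ satisfies $\psi_\epsilon\le\psi$ with equality at $x_0$, and it is precisely this inequality (not ``$\psi$ dominated by the barrier expression'') that makes $x_0$ an interior maximum of the smooth function $\psi_\epsilon$, to which the maximum principle is then applied. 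Second, when $A<0$ your limiting inequality yields only $Bu(o)^\sigma\le 0$, since $As^2/R^4\le 0$ but is not $\le A$; the sentence ``in either case $Bu(o)^\sigma\le A$'' is therefore not quite what you proved, but the conclusion is unaffected, because $Bu(o)^\sigma\le 0$ together with $B>0$ already contradicts $u(o)>0$, which is all the case $A\le 0$ requires. (A cosmetic point: if $x_0=o$ then $r$ is not differentiable there, so it is cleaner to run the computation in terms of the smooth function $r^2$, of which all your displayed quantities are functions.) With these adjustments the proof is complete and correct.
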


\begin{proof}[Proof of Theorem \ref{scalarcurvaturetheorem}]
By assumption, $\om \in \Ga(\symkt) \cap \ker \div \cap \ker \klie$ solves \eqref{projectivehiggsintro} for $c> 0$ and $\ka \in \rea$. By \eqref{projectivehiggsintro}, $\rictr(\sR) = c\rictr(\om \kwedge \om) + \tfrac{\ka}{n}h$, 
and $\scal(\sR) = c|\om|^{2} +\ka$. Together with \eqref{qyalbe} these observations yield
\begin{align}\label{phscal1}
\begin{aligned}
\tfrac{1}{k}\qR(\om) &= \tfrac{k-1}{2}\lb \sR, \om \kwedge \om \ra + \lb \rictr(\sR), \rictr(\om\kwedge  \om)\ra \\
& = c\left(\tfrac{k-1}{2}|\om \kwedge \om|^{2} + |\rictr(\om \kwedge \om)|^{2} \right) + \tfrac{\ka(n+k-2)}{n(n-1)}|\om|^{2}.
\end{aligned}
\end{align}
Since $c > 0$, it follows from \eqref{phscal1} and \eqref{loweromtriple} of Lemma \ref{lowerkwedgelemma} that
\begin{align}
\label{phscal2}
\begin{aligned}
\tfrac{1}{k}\qR(\om) \geq  \left(c\skmin_{n, k}|\om|^{2} + \tfrac{\ka(n+k-2)}{n(n-1)}\right)|\om|^{2} \geq   \tfrac{n+k-2}{n(n-1)}\left(c|\om|^{2} + \ka\right)|\om|^{2}.
\end{aligned}
\end{align}
Substituting \eqref{phscal2} into \eqref{sharplapom} of Lemma \ref{swlemma} yields
\begin{align}
\label{phscal3}
\begin{aligned}
\lap_{h}|\om|^{\tfrac{n-2}{n+k-2}} &\geq \tfrac{n-2}{k(n+k-2)}\qR(\om) |\om|^{- \tfrac{n+2(k-1)}{n+k-2}}\geq  \tfrac{n-2}{n(n-1)}\left(c|\om|^{2} + \ka\right)|\om|^{\tfrac{n-2}{n+k-2}} 
 \\&=  \tfrac{n-2}{n(n-1)}\left( c\left(|\om|^{\tfrac{n-2}{n+k-2}}\right)^{1 + \tfrac{2(n+k-2)}{n-2}} + \ka|\om|^{\tfrac{n-2}{n+k-2}}\right).
\end{aligned}
\end{align}
Because $h$ is complete, Theorem \ref{cyestimatetheorem} applies. If $\ka \geq 0$, it implies $\om$ vanishes. In this case $h$ is a metric of constant sectional curvature. If $\ka < 0$, Theorem \ref{cyestimatetheorem} yields $\sup_{M}|\om|^{2} \leq -\ka/c$. This implies $\sup_{M}\scal \leq 0$.

When $k = 2$, combining \eqref{phscal1} with \eqref{lijklnormbk2b} yields
\begin{align}\label{phscalk2}
\begin{aligned}
\tfrac{1}{2}\qR(\om) & = c\left(\tfrac{1}{2}|\om \kwedge \om|^{2} + |\rictr(\om \kwedge \om)|^{2} \right) + \tfrac{\ka}{n-1}|\om|^{2} =c|\om|^{4}+ \tfrac{\ka}{n-1}|\om|^{2} = |\om|^{2}\left( \tfrac{\ka}{n-1} + c|\om|^{2}\right).
\end{aligned}
\end{align}
Substituting \eqref{phscalk2} into \eqref{sharplapom} of Lemma \ref{swlemma} yields
\begin{align}
\label{phscalk23}
\begin{aligned}
\tfrac{n}{n-2}\lap_{h}|\om|^{\tfrac{n-2}{n}} &\geq \tfrac{1}{2}\qR(\om) |\om|^{- \tfrac{n+2)}{n}}\geq 
 \left(c|\om|^{2} + \tfrac{\ka}{n-1}\right)|\om|^{\tfrac{n-2}{n}}
 =   c\left(|\om|^{\tfrac{n-2}{n}}\right)^{1 + \tfrac{2n}{n-2}} + \tfrac{\ka}{n-1}|\om|^{\tfrac{n-2}{n}}.
\end{aligned}
\end{align}
The rest of the argument is as that following \eqref{phscal3}, but now yields $\sup_{M}|\om|^{2} \leq -\tfrac{\ka}{c(n-1)}$. This implies $\sup_{M}\scal \leq \tfrac{n-2}{n-1}\ka < 0$.
\end{proof}

\begin{remark}\label{ishihararemark}
Consider a pseudo-Riemannian space form $(N, g)$ with constant negative sectional curvature $-c$ (in Lorentzian signature such a space form is often called \emph{anti de Sitter space}). Combining the $k =2$ case of Theorem \ref{hypersurfacetheorem} with Theorem \ref{scalarcurvaturetheorem} shows that the squared-norm $|\sff|^{2}$ of the second fundamental form of a mean curvature zero spacelike hypersurface complete in the induced metric satisfies $|\sff|^{2} \leq -\tfrac{1}{n+1}\scal_{g} = -nc$. This recovers a theorem of Ishihara \cite[Theorem $1.2$]{Ishihara-maximal}. 
\end{remark}

\begin{proof}[Proof of Theorem \ref{simonstheorem} and Corollary \ref{simonscorollary}]
By assumption $\om \in \Ga(\symkt) \cap \ker \div \cap \ker \klie$ solves \eqref{projectivehiggsintro} for $c <  0$ and $\ka \in \rea$. The equation \eqref{phscal1} remains valid. For $k \geq 2$, since $c < 0$, it follows from \eqref{phscal1} and \eqref{loweromtriple} of Lemma \ref{lowerkwedgelemma} that
\begin{align}
\label{phscal2b}
\begin{aligned}
\tfrac{1}{k}\qR(\om) \geq  \left(c\skmax_{n, k}|\om|^{2} + \tfrac{\ka(n+k-2)}{n(n-1)}\right)|\om|^{2}.
\end{aligned}
\end{align}
Combining \eqref{lapomliediv} and \eqref{phscal2b} yields
\begin{align}\label{presimons}
\begin{aligned}
0 &= \tfrac{1}{2}\int_{M}\lap_{h}|\om|^{2}d\vol_{h}  = \int_{M}\left(|D\om|^{2} +\tfrac{1}{k} \qR(\om)\right)d\vol_{h}
\\ &\geq \int_{M}\left(|D\om|^{2} +  |\om|^{2}\left(  \tfrac{n+k-2}{n(n-1)}\ka + c\skmax_{n, k}|\om|^{2} \right)\right)d\vol_{h},
\end{aligned}
\end{align}
which shows Theorem \ref{simonstheorem}.
Taking $k = 3$ in \eqref{presimons} and using \eqref{lijklnormk3} of Lemma \ref{k3normlemma} yields \eqref{simonsinequalityk3}. When $k > 3$, \eqref{sk4plusest} of Lemma \ref{k4plusnormcorollary} and \eqref{presimons} yield \eqref{simonsinequality}. 

When $k = 2$, combining \eqref{lapomliediv} and \eqref{phscal1} yields
\begin{align}
\begin{aligned}
0 &= \tfrac{1}{2}\int_{M}\lap_{h}|\om|^{2}d\vol_{h}  = \int_{M}\left(|D\om|^{2} + \tfrac{1}{k}\qR(\om)\right)d\vol_{h} \\
&\quad = \int_{M}\left(|D\om|^{2} +  |\om|^{2}\left( \tfrac{\ka}{n-1}\ka + c|\om|^{2}\right)\right)d\vol_{h},
\end{aligned}
\end{align}
which shows \eqref{simonsinequalityk2}. 
\end{proof}

\bibliographystyle{amsplain}
\def\polhk#1{\setbox0=\hbox{#1}{\ooalign{\hidewidth
  \lower1.5ex\hbox{`}\hidewidth\crcr\unhbox0}}} \def\cprime{$'$}
  \def\cprime{$'$} \def\cprime{$'$}
  \def\polhk#1{\setbox0=\hbox{#1}{\ooalign{\hidewidth
  \lower1.5ex\hbox{`}\hidewidth\crcr\unhbox0}}} \def\cprime{$'$}
  \def\cprime{$'$} \def\cprime{$'$} \def\cprime{$'$}
  \def\polhk#1{\setbox0=\hbox{#1}{\ooalign{\hidewidth
  \lower1.5ex\hbox{`}\hidewidth\crcr\unhbox0}}} \def\cprime{$'$}
  \def\Dbar{\leavevmode\lower.6ex\hbox to 0pt{\hskip-.23ex \accent"16\hss}D}
  \def\cprime{$'$} \def\cprime{$'$} \def\cprime{$'$} \def\cprime{$'$}
  \def\cprime{$'$} \def\cprime{$'$} \def\cprime{$'$} \def\cprime{$'$}
  \def\cprime{$'$} \def\cprime{$'$} \def\cprime{$'$} \def\dbar{\leavevmode\hbox
  to 0pt{\hskip.2ex \accent"16\hss}d} \def\cprime{$'$} \def\cprime{$'$}
  \def\cprime{$'$} \def\cprime{$'$} \def\cprime{$'$} \def\cprime{$'$}
  \def\cprime{$'$} \def\cprime{$'$} \def\cprime{$'$} \def\cprime{$'$}
  \def\cprime{$'$} \def\cprime{$'$} \def\cprime{$'$} \def\cprime{$'$}
  \def\cprime{$'$} \def\cprime{$'$} \def\cprime{$'$} \def\cprime{$'$}
  \def\cprime{$'$} \def\cprime{$'$} \def\cprime{$'$} \def\cprime{$'$}
  \def\cprime{$'$} \def\cprime{$'$} \def\cprime{$'$} \def\cprime{$'$}
  \def\cprime{$'$} \def\cprime{$'$} \def\cprime{$'$} \def\cprime{$'$}
  \def\cprime{$'$} \def\cprime{$'$}
\providecommand{\bysame}{\leavevmode\hbox to3em{\hrulefill}\thinspace}
\providecommand{\MR}{\relax\ifhmode\unskip\space\fi MR }
\providecommand{\MRhref}[2]{%
  \href{http://www.ams.org/mathscinet-getitem?mr=#1}{#2}
}
\providecommand{\href}[2]{#2}

\end{document}